\documentclass[11pt]{amsart} 
\usepackage[mathscr]{eucal} 
\usepackage{mathabx} 
\usepackage[T1]{fontenc}
\usepackage[sc]{mathpazo} 

\usepackage{graphicx}
\usepackage{tikz}
\usetikzlibrary{tikzmark,arrows.meta,positioning}
\usetikzlibrary{decorations.pathreplacing}

\usepackage{hyperref}
\usepackage[capitalise,noabbrev]{cleveref}

\usepackage{algorithm}
\usepackage[noEnd=True,indLines=True]{algpseudocodex}

\usepackage{booktabs}
\usepackage{framed}
\usepackage{makecell}
\newcolumntype{V}{!{\vrule width 2pt}}

\usepackage{comment}  
\usepackage{fullpage} 
\numberwithin{equation}{section}

\usepackage[backend=bibtex,giveninits,block=space,sortcites=true,maxbibnames=99,style=numeric,backref=true]{biblatex}
\usepackage{doi}
\DefineBibliographyStrings{english}{%
  backrefpage = {page},
  backrefpages = {pages},
}

\renewbibmacro{in:}{}
\DeclareFieldFormat{pageref}{#1}
\DeclareFieldFormat[article]{volume}{\mkbibbold{#1}}
\DeclareFieldFormat[article]{number}{(#1)}
\DeclareFieldFormat{doi}{\href{https://dx.doi.org/#1}{\textsc{doi}}}
\AtEveryBibitem{\clearfield{issn}\clearfield{isbn}\clearlist{language}}
\renewbibmacro*{volume+number+eid}{%
  \printfield{volume}%
  \printfield{number}%
  \setunit{\bibeidpunct}%
  \printfield{eid}}
\def\abx@missing@entry#1{\abx@missing{#1??}}

\usepackage{todonotes}

\theoremstyle{plain}
\newtheorem{theorem}{Theorem}[section]
\newtheorem{corollary}[theorem]{Corollary}
\newtheorem{proposition}[theorem]{Proposition}

\newtheorem{lemma}[theorem]{Lemma}

\theoremstyle{definition}
\newtheorem{definition}[theorem]{Definition}
\newtheorem{construction}[theorem]{Construction}

\theoremstyle{remark}
\newtheorem{remark}[theorem]{Remark}
\newtheorem{problem}[theorem]{Problem}

\crefname{problem}{Problem}{Problems}
\crefname{corollary}{Corollary}{Corollaries}
\crefname{construction}{Construction}{Constructions}

\DeclareRobustCommand{\tdef}[1]{\textcolor{blue}{\textbf{#1}}}
\newcommand{\alert}[1]{\textcolor{red}{{#1}}}

\def\return{\mathsf{return}}

\def\breturn{\mathsf{breturn}}
\def\run{\mathsf{run}}
\def\pri{\mathsf{pri}}

\def\rrun{\mathsf{rrun}}

\def\brun{\mathsf{brun}}

\def\fh{\operatorname{fh}}
\def\h{\operatorname{h}}

\def\rr{\mathsf{rr}}
\def\brr{\mathsf{brr}}
\def\rbr{\mathsf{rbr}}
\def\bruf{\operatorname{bruf}}
\def\bbruf{\operatorname{bbruf}}

\def\truf{\operatorname{truf}}
\def\bref{\operatorname{bref}}
\def\trf{\operatorname{tref}}
\def\rf{\operatorname{ref}}
\def\c{\mathbf{c}}
\def\u{\mathbf{u}}
\def\vc{\mathbf{vc}}
\def\inc{\operatorname{inc}}

\def\comp{\mathbf{comp}}

\def\cons{\mathbf{cons}}
\def\P{\mathcal{P}}

\def\D{\mathcal{D}}
\def\CP{\mathcal{CP}}
\newcommand{\cCP}{\c \text{-}\CP}
\def\PF{\mathbf{PF}}
\def\PPF{\mathbf{PPF}}
\def\CPF{\mathbf{CPF}}

\newcommand{\ballot}{\mathcal{B}}
\newcommand{\psum}{\Sigma}

\title{Symmetries on vector parking functions via bounded lattice paths}

\author[W. Fang]{Wenjie Fang}
\address[Wenjie Fang]{Univ Gustave Eiffel, CNRS, LIGM, F-77454 Marne-la-Vallée, France}
\email{wenjie.fang@univ-eiffel.fr}

\author[Y. Li]{Yang Li}
\address[Yang Li]{Research Center for Mathematics and Interdisciplinary Sciences, Shandong University \& Frontiers Science Center for Nonlinear Expectations, Ministry of Education, Qingdao 266237, P.R. China}
\email{202421349@mail.sdu.edu.cn}

\author[Z. Lin]{Zhicong Lin}
\address[Zhicong Lin]{Research Center for Mathematics and Interdisciplinary Sciences, Shandong University \& Frontiers Science Center for Nonlinear Expectations, Ministry of Education, Qingdao 266237, P.R. China}
\email{linz@sdu.edu.cn}

\keywords{bounded lattice paths, vector parking functions, symmetric statistics, bijections, prime vector parking functions.
  \newline \indent 2020 {\it Mathematics Subject Classification}. 05A05, 05A15, 05A19.}

\begin{document}

\maketitle

\begin{abstract}
  Partly motivated by enumeration of parking functions and their variants, there is a long-standing interest in lattice paths refined by several statistics, including the notable $\run$ and $\return$.
  We consider bounded lattice paths, which are bounded by a given lattice path and are related to vector parking functions, for which we generalize $\run$ to composition runs, parameterized by a composition.
  By constructing involutions on these paths, we establish symmetries relating composition runs to some generalized return statistics.
  As an application, we settle an open problem of Dai, Fu, and Qiu on rational Dyck paths.
  The symmetries between generalized $\run$ and $\return$ are then transferred to vector parking functions, which suggest a new notion of prime decomposition.
  In the special case of $(a, b)$-parking functions, we compute the generating function refined by $\run$ and $\pri$, which is new even for classical parking functions.
\end{abstract}

\section{Introduction} \label{sec1} 
Let $[n] := \{1,2,\dots,n\}$.
A \tdef{parking function} of length $n$ is a sequence $\alpha = (\alpha_1, \dots, \alpha_n) \in [n]^n$ satisfying
\[
  \forall i \in [n], \; |\{j \in [n]: \alpha_j \le i\}| \ge i.
\]
In other words, the non-decreasing rearrangement $\inc(\alpha) = (\beta_1, \dots, \beta_n)$ of $\alpha$ satisfies $\beta_i \le i$ for every $i$.
The name of parking functions originates from the following scenario: on a one-way street with $n$ parking spots, $n$ cars arrive in order, where the $i$-th one prefers the parking spot $\alpha_i$, and each car tries to park at its preferred spot, and take the next vacant spot if taken, except when no such spot exists, then it fails to park.
The sequence $\alpha$ is a parking function if all cars are able to park.

The concept of parking functions was first introduced by Konheim and Weiss~\cite{Konheim1966} in their work on computer storage.
Using analytic methods, they showed that the number of parking functions of length $n$ is $|\PF_n| = (n + 1)^{n - 1}$, where $\PF_n$ denotes the set of parking functions with length $n$.
Parking functions are closely related to labeled trees, hyperplane arrangements, noncrossing partitions, and Dyck paths.
For a thorough treatment, see the survey by Yan~\cite{Yan2014}.

For a sequence $\alpha$, let $\alpha_i$ denote its $i$-th entry, $|\alpha|$ its length.
By abuse of notation, we also regard $\alpha$ as the multiset of its elements whenever we write  $S\subseteq\alpha$.
A parking function $\alpha = (\alpha_1, \dots, \alpha_n)$ is \tdef{prime} if 
\[
  \forall i > 1, \; |\{j\in [n]: \alpha_j < i\}| \ge i.
\]
Equivalently, $\alpha$ remains a parking function after deleting an entry equal to $1$.
We denote by $\PPF_n$ the set of prime parking functions of length $n$; its counting formula (see~\cite[Ex.~5.49(b)]{Stanley1999}) is $|\PPF_n| = (n - 1)^{n - 1}$.
The notion of prime parking functions was introduced by Gessel~\cite{Stanley1999}, and they are intimately related with minimal transitive cycle factorizations of permutations~\cite{Kim2003}.
Gessel observed that every parking function can be decomposed into prime parking functions, and he defined the following \tdef{prime decomposition} procedure:
\begin{definition}
  Let $\alpha = (\alpha_1, \dots, \alpha_n) \in \PF_n$.
  Take an index subset $L \subseteq [n]$ of maximum size such that the subsequence $\alpha' := (\alpha_i)_{i \in L}$ is a prime parking function.
  Remove $\alpha'$ from $\alpha$ and reduce each remaining entry by $|L|$.
  The resulting tuple $(\alpha_i - |L|)_{i \in [n] \setminus L}$ is again a parking function.
  Thus, $\alpha$ splits into a prime parking function $\alpha'$ and a shorter parking function.
  Iterating this reduction until empty yields a decomposition of $\alpha$ into a sequence of prime parking functions.
\end{definition}

For example, the parking function
$\textcolor{blue}{7}%
\textcolor{red}{1}%
\textcolor{teal}{5}%
\textcolor{blue}{8}%
\textcolor{red}{2}%
\textcolor{red}{1}%
\textcolor{blue}{7}%
\textcolor{teal}{5}%
\textcolor{blue}{9}%
\textcolor{red}{2}\in\PF_{10}$
is decomposed into three prime parking functions $\textcolor{red}{1212}$, $\textcolor{teal}{11}$, and $\textcolor{blue}{1213}$. 

The \tdef{prime} statistic on a parking function $\alpha$, denoted by $\pri(\alpha)$, is the number of prime parking functions in its prime decomposition.
Using the symbolic method~\cite{flajolet}, the prime decomposition of parking functions implies
\begin{equation}
  \sum_{n \geqslant 0} \sum_{\alpha \in \PF_n} t^{\pri(\alpha)} \frac{x^n}{n!} = \left(1-t \sum_{n \geqslant 1} (n-1)^{n-1} \frac{x^n}{n!}\right)^{-1}.
\end{equation}

Motivated by the Pak--Stanley labeling of Shi arrangements, Duarte and Guedes de Oliveira~\cite{Duarte2019} introduced the run statistic on parking functions.
For a parking function $\alpha = (\alpha_1, \dots, \alpha_n)$, the \tdef{run} of $\alpha$, denoted by $\run(\alpha)$, is defined by
\[
  \run(\alpha) = \max \{k \in [n] : [k] \subseteq\alpha\}.
\]
Du~\cite{Du2025} proved that $\run$ and $\pri$ are equidistributed over $\PF_n$:
\begin{equation}
  \sum_{\alpha \in \PF_n} t^{\pri(\alpha)} = \sum_{\alpha \in \PF_n} t^{\run(\alpha)}.
\end{equation}
Then, Li and Lin~\cite{Li2023} used Dyck paths to establish the following joint symmetry
\begin{equation}\label{eq:Li-Lin}
  \sum_{\alpha \in \PF_n} t^{\pri(\alpha)} q^{\run(\alpha)} = \sum_{\alpha \in \PF_n} t^{\run(\alpha)} q^{\pri(\alpha)}.
\end{equation}

Vector parking functions generalize classical parking functions by generalizing the cardinality constraint to be parameterized by a weakly increasing vector $\u$.
See \Cref{sec5} for precise definitions.
The concept of vector parking functions can be traced back to the work of Stanley and Pitman~\cite{Stanley2002} in 2002, and later formalized and named by Kung and Yan~\cite{Kung2003} in 2003.
It was also shown in~\cite{Kung2003} that the total number of vector parking functions of length $n$ is given by a specific evaluation of the $n$-th Gon\v{c}arov polynomial.

The first motivation of the current article is to generalize~\eqref{eq:Li-Lin} to vector parking functions.
Our approach relies on \emph{bounded lattice paths}, which are directed lattice paths weakly under a fixed boundary.
Related definitions are postponed to the next section.
There are few results on joint symmetric distributions of statistics on bounded lattice paths.
A notable one is due to Elizalde and Rubey~\cite{Elizalde2016}, who proved the symmetry of the pairs (\#\textsf{top contacts}, \#\textsf{bottom contacts}) and (\#\textsf{left contacts}, \#\textsf{right contacts}).

In contrast, joint symmetric distributions of various statistics on Dyck paths have already attracted much attention.
Using bijections, Deutsch~\cite{Deu98} proved the
symmetry of the pairs (\#\textsf{return}, \textsf{height of the first peak})
and (\#\textsf{double rise}, \#\textsf{valley}) on Dyck paths.
The symmetry of the pair $(\mathsf{area}, \mathsf{bounce})$ on Dyck paths follows directly from the famous Shuffle Theorem established by Carlsson and Mellit~\cite{Carlsson2018}; the zeta map~\cite{zeta-map} was shown by Haglund~\cite{Hag03} that it transforms the pair $(\mathsf{area}, \mathsf{bounce})$ to $(\mathsf{dinv}, \mathsf{area})$ on Dyck paths.
Nevertheless, finding a combinatorial proof for the symmetry of $(\mathsf{area}, \mathsf{bounce})$ or $(\mathsf{dinv}, \mathsf{area})$ remains a long-standing open problem in algebraic combinatorics, although it is trivial in the algebraic interpretation of the related generating polynomials as bigraded Hilbert series of the subspace of alternants in the diagonal coinvariant space with two sets of variables~\cite{Haglund-qt}.
Using an involution on plane trees, Pappe, Paul and Schilling~\cite{PPS22} proved the symmetry of $(\mathsf{area}, \mathsf{depth})$, with $\mathsf{depth}$ a newly defined statistic.
Li and Lin~\cite{Li2023} recast the parking function statistics $\run$ and $\pri$ on Dyck path and proved combinatorially their joint symmetry:
\begin{equation}\label{eq:Li-Lin2}
  \sum_{D \in \D_n} t^{\run(D)} q^{\return(D)} = \sum_{D \in \D_n} t^{\return(D)} q^{\run(D)}.
\end{equation}

There has also been recent interest in symmetries of rational Dyck paths.
As a natural extension of classical Dyck paths, an $m \times n$ \emph{rational Dyck path} is a lattice path from $(0, 0)$ to $(m, n)$ weakly below the diagonal $my = nx$.
We write $\D_{m,n}$ for the set of all such paths.
See \Cref{sec:rational-dyck} for a formal definition.
Mellit~\cite{Mellit2021} proved the symmetric joint distribution of $(\mathsf{dinv}, \mathsf{area})$ on rational Dyck paths as a consequence of the rational shuffle theorem.
Dai, Fu, and Qiu~\cite{Dai2026} generalized the symmetry~\eqref{eq:Li-Lin2} to rational Dyck paths.
In the case $m \le n$, they defined the \emph{ratio-run} statistic $\rrun$ on $\D_{m,n}$ and established the symmetries
\begin{equation}
  \sum_{D\in\D_{m,n}} t^{\run(D)} q^{\return(D)} = \sum_{D\in\D_{m,n}} t^{\return(D)} q^{\run(D)} ,
\end{equation}
\begin{equation}
  \sum_{D\in\D_{m,n}} t^{\rrun(D)} q^{\return(D)} = \sum_{D\in\D_{m,n}} t^{\return(D)} q^{\rrun(D)} .
\end{equation}
Although their results hold only for $m \le n$, they have sought an extension valid for all $m, n$, raising the following open problem.

\begin{problem}[Dai, Fu, and Qiu {\cite{Dai2026}}]\label{prob:Dai}
  For any integers $m$ and $n$, is it possible to modify the definitions of $\run$ and $\return$ so that their joint distribution is symmetric over $\D_{m,n}$?
\end{problem}

In~\cite{Xin2023}, Xin and Zhang introduced the statistics $\mathsf{area}$, $\mathsf{bounce}$, and $\mathsf{dinv}$ on $\vec{k}$-Dyck paths and considered their symmetric statistics. 
More recently, Qu and Zhang~\cite{Qu2026} proved the symmetry of $(\textsf{area}, \textsf{depth})$ on $\vec{k}$-Dyck paths, which is later extended to Łukasiewicz paths by the first author~\cite{fang-luka}.
They observed that if a bijection exchanging $\text{stat}_1$ and $\text{stat}_2$ on Dyck paths preserves composition type (see \Cref{sec:2.2}), then it yields the symmetry of $(\text{stat}_1, \text{stat}_2)$ on $\vec{k}$-Dyck paths. 
It is the case for the involution on Dyck paths given by Li and Lin~\cite{Li2023} for the symmetry~\eqref{eq:Li-Lin2}, hence $(\run, \return)$ is symmetric on $\vec{k}$-Dyck paths.
Qu and Zhang~\cite{Qu2026} further proposed the following interesting open problem.

\begin{problem}[Qu and Zhang {\cite{Qu2026}}]\label{prob:Qu0}
  Find other pairs of symmetric statistics on Dyck paths with a given composition type $\mathfrak{c}$.
\end{problem}

To our knowledge, $(\run, \return)$ is the only symmetric statistic pair on Dyck paths with fixed composition type.
In \Cref{sec4}, we introduce a bounded version of $\vec{k}$-Dyck paths, and consider the following generalization of \Cref{prob:Qu0}, which, along with \Cref{prob:Dai}, makes the second motivation of the current article.

\begin{problem}\label{prob:Qu}
  Find symmetric statistics on $\vec{k}$-bounded paths with a given composition type $\mathfrak{c}$.
\end{problem}

Here is a summary of our main results:
\begin{itemize}
\item We introduce a family of generalized run and return statistics on bounded lattice paths, which allows us to answer \Cref{prob:Dai,prob:Qu}.
\item We introduce a new notion of prime decomposition for vector parking functions, obtaining symmetric statistic pairs that generalize the symmetry in \Cref{eq:Li-Lin}; see \Cref{coro:PF,coro:CPF}.
\item Using our prime decomposition and a decomposition of ballot paths, we compute the exponential generating function of $(a,b)$-parking functions refined by $\run$ and $\pri$; see \Cref{thm:parking-gf}.
\end{itemize}

The rest of the article is organized as follows.
In \Cref{sec2} we provide the relevant background on bounded lattice paths and define several generalizations of the run and return statistics.
We then construct in \Cref{sec3} four involutions that establish several symmetry properties of generalized run and return statistics on bounded lattice paths, which are applied in \Cref{sec4} to answer \Cref{prob:Dai,prob:Qu}.
In \Cref{sec5}, we introduce a prime decomposition of vector parking functions, to which we transfer some symmetries on bounded lattice paths.
We then compute generating functions of $(a,b)$-parking functions refined by related statistics in \Cref{sec6}.
We conclude in \Cref{sec7} with a summary and some related open problems.

\section{Preliminaries} \label{sec2}

A \tdef{(directed) lattice path} in $\mathbb{N}^2$ is a sequence of \tdef{north steps} (or \tdef{$N$-steps}) $N = (0, 1)$ and \tdef{east steps} (or \tdef{$E$-steps}) $E = (1, 0)$, from the origin to a given point $(m, n) \in \mathbb{N}^2$.
Fix $B$ a lattice path from the origin to $(m, n)$.
A lattice path $P$ is said to be \tdef{bounded} by $B$ if it also ends at $(m,n)$ while lying weakly below $B$.
Let $\P_B$ denote the set of lattice paths bounded by $B$.
We say that $B$ is the \tdef{boundary} of $\P_B$.
\tdef{Dyck paths} of semilength $n$ form a special class of bounded lattice paths with boundary $(EN)^n$.
We denote by $\D_n$ the set of Dyck paths of semilength $n$.
They are counted by the famous Catalan number $C_n = \frac{1}{2n + 1} \binom{2n + 1}{n}$.

For a lattice path $P$, the \tdef{height} of an $E$-step in $P$ is $1$ plus the number of $N$-steps preceding it.
The extra $1$ is for accommodating the convention of parking functions introduced later.
For convenience, we consider all lattice paths as preceded by a virtual $E$-step of height $1$.
This ensures a uniform treatment in later definitions for all lattice paths.
The \tdef{height sequence} of $P$, denoted $\h(P)$, is the weakly increasing list of the heights of all $E$-steps.
For example, the lattice path $P$ in \Cref{fig-bounded-path} has $\h(P) = (2, 3, 3, 4, 5, 5, 5, 10, 12, 13, 13, 13, 13, 15, 15)$.
A lattice path $P$ is uniquely determined by its height sequence $\h(P)$.

A \tdef{floor} in $P$ is a non-empty maximal sequence of consecutive $E$-steps.
We always order the floors from left to right.
All $E$-steps in the same floor have the same height.
The \tdef{height of the $i$-th floor} is defined as that common height.
The \tdef{floor height sequence}, denoted $\fh(P)$, is the increasing sequence of floor heights.
For example, the lattice path $P$ in \Cref{fig-bounded-path} has $\fh(P) = (1, 2, 3, 4, 5, 10, 12, 13, 15)$.
Note that by the convention of the starting virtual $E$-step, $\fh(P)$ always starts with $1$.

\subsection{Natural extensions of the classical run and return statistics}

For any Dyck path $D\in\D_n$, the \tdef{run} statistic is defined in \cite{Duarte2019} as
\[
  \run(D) = \max \{k : [k] \subseteq \fh(D) \}.
\]
There are two interpretations of $\run(D)$:
\begin{itemize}
\item It is the number of initial floors of $D$ whose heights are exactly $1, 2, \dots$ in order.
\item It is the maximum number of initial floors of the boundary $(EN)^n$ whose heights all appear as floor heights in $D$.
\end{itemize}
They coincide on Dyck paths but differ on general bounded lattice paths.
We therefore introduce two $\run$-like statistics: the \emph{run}, denoted $\run$, and the \emph{boundary run}, denoted $\brun_B$.

\begin{definition}
  Let $B$ be a boundary ending at $(m,n)$ with floor heights $\fh(B) = (b_1, b_2, \dots, b_\ell)$. 
  For $P \in \P_B$, define
  \begin{itemize}
  \item \tdef{Run} of $P$: $\run(P) = \max \{k : [k] \subseteq \fh(P) \}$;
  \item \tdef{Boundary run} of $P$: $\brun_B(P) = \max \{k : \{b_1, b_2, \dots, b_k\} \subseteq \fh(P) \}$.
  \end{itemize}
  Note that $\run(P)$ depends only on $P$ and not on the boundary $B$.
\end{definition}

\begin{figure}
  \centering
  \begin{tikzpicture}[
    scale=0.45,
    >=stealth,
    grid/.style={gray, thin},
    boundary/.style={},
    pathA/.style={blue!60, very thick},
    pathB/.style={very thick},
    highlight/.style={red, dotted, very thick}
    ]

    \begin{scope}
      
      \draw[grid] (0,0) grid (15,15);

      \draw[boundary] (0,0) rectangle (15,15);

      \draw[pathA]
      (0,0) -- (0,2) -- (1,2) -- (1,4) -- (5,4) 
      -- (5,5) -- (5,5) -- (5,9) -- (6,9) -- (6,10)
      -- (7,10) -- (7,12) -- (11,12)--(11,14)--(14,14)--(14,15)--(15,15);

      \draw[pathB]
      (-1,0)--(0,0) -- (0,1) -- (1,1) -- (1,2) -- (3,2) -- (3,3) -- (4,3)--(4,4)--(7,4)--(7,9)--(8,9)--(8,11)--(9,11)--(9,12)--(13,12) --(13,14)--(15,14)--(15,15);

      \draw[-latex, color=red] (-2, 0) -- (-1.2, 0);
      \draw[-latex, color=red] (-2, 4) -- (-1.2, 4);
      \draw[-latex, color=red] (-2, 12) -- (-1.2, 12);
      \draw[-latex, color=red] (-2, 14) -- (-1.2, 14);
    \end{scope}
  \end{tikzpicture}
  \caption{An example of bounded paths, with marked floors sharing steps.}
  \label{fig-bounded-path}
\end{figure}

For example, for $P$ the path bounded by $B$ in \Cref{fig-bounded-path}, we have $\run(P) = 5$ and $\brun_B(P) = 4$.

For a Dyck path $D \in \D_n$, the classical $\return$ statistic is the number of $E$-steps starting from the diagonal $y = x$ in $D$. 
It can be interpreted as the number of floors sharing steps with the boundary $(EN)^n$.
This motivates the following extension to bounded paths. 

\begin{definition}
  Given a path $P \in \P_B$ bounded by $B$, we define $\return_B(P)$ to be the number of floors of $P$ that have at least one overlapping step with $B$.
  Note that the virtual $E$-step before the origin is taken into account.
\end{definition}

As an example, the bounded path $P$ in \Cref{fig-bounded-path} has $\return_B(P) = 4$, with contributing floors marked by red arrows.

\subsection{Composition run and block return} \label{sec:2.2}

The two statistics run and boundary run are different but exhibit similar properties.
To unify them, we now introduce \emph{composition run}.

A \tdef{composition} $\c$ of size $n > 0$, is a finite sequence $\c = (c_1, c_2, \dots, c_k)$ of positive integers such that $\sum_{i=1}^k c_i = n$.
We also write $\c \vDash n$.
The \tdef{composition-type} of a lattice path $P$ with $n$ east steps, denoted by $\comp(P)$, is the composition $\c \vDash n$ such that the $i$-th floor of $P$ is of length $c_i$. 
For instance, the bounded lattice path $P$ in \Cref{fig-bounded-path} has composition-type $(1, 1, 2, 1, 3, 1, 1, 4, 2)$.

For a lattice path $P$, suppose $\fh(P) = (h_1, h_2, \dots, h_m)$. 
The composition $(h_1, h_2 - h_1, \dots, h_m - h_{m-1}) \vDash h_m$ is called the \tdef{vertical composition-type} of $P$, denoted by $\vc(P)$.
For instance, for $B$ the boundary path in \Cref{fig-bounded-path}, we have $\vc(B) = (1, 2, 2, 5, 1, 2, 2, 1)$.
Note that by the convention of the starting virtual $E$-step, $\vc(P)$ always starts with $1$.

We need a partial order on compositions. Let $\mathbf{a} = (a_1, \dots, a_{\ell}) \vDash n$ and $\mathbf{b} = (b_1, \dots, b_{m}) \vDash n$ with $m \leq \ell$.
We say that $\mathbf{a}$ is smaller than $\mathbf{b}$ in the \tdef{componentwise order}, written as $\mathbf{a} \le_c \mathbf{b}$, if for all $i \in [m]$, $a_i \leq b_i$.
When we write $\mathbf{a} \le_c \mathbf{b}$, we assume implicitly that $\mathbf{a}, \mathbf{b}$ have the same size.
For example, let $\mathbf{a} = (1, 2, 3, 1, 5, 4)$, $\mathbf{b} = (1, 1, 2, 1, 1, 4, 3, 3)$, and $\mathbf{c} = (1, 1, 1, 2, 1, 4, 3, 3)$.
Then $\mathbf{b} \le_c \mathbf{a}$, $\mathbf{c} \nleq_c \mathbf{a}$.

\begin{definition}
  Let $P \in \P_B$ be a lattice path bounded by $B$, and $\vc(B) = (v_1, \ldots, v_m)$.
  For $\c = (c_1, \dots, c_{m'}) \le_c \vc(B)$, the \tdef{composition run} of $P$ corresponding to $\c$ and boundary $B$ is
  \[
    \run_{\c,B}(P) = \max\{k \leq m' : \{c_1, c_1 + c_2, \dots, c_1 + \dots + c_k\} \subseteq \fh(P) \}. 
  \]
\end{definition}

For example, take the bounded path $P$ in \Cref{fig-bounded-path} with $\c = (1, 1, 2, 1, 3, 1, 1, 4, 2)$ and $\c' = (1, 1, 1, 2, 1, 1, 2, 1, 4, 2)$, then $\run_{\c,B}(P) = \run_{\c',B}(P) = 4$.
Composition runs generalize both $\run$ and $\brun$: we have $\run(P) = \run_{\c, B}(P)$ for $\c = (1, \dots, 1)$, and $\brun_B(P) = \run_{\c', B}(P)$ for $\c' = \vc(B)$.

We now introduce a $\return$-like statistic.

\begin{definition}
  For a path $P \in \P_B$ bounded by $B$, a floor $F$ of $P$ is called a \tdef{block return floor} if
  \begin{itemize}
  \item $F$ has at least one step overlapping $B$; or
  \item there is a block return floor whose height is one plus that of $F$.
  \end{itemize}
  Equivalently, one can begin with all floors that have an overlapping step with the boundary, and then repeatedly select any floor of height precisely one less than a floor already selected. All block return floors are thus obtained in the end. The \tdef{block return} of $P$, denoted by $\breturn_B(P)$, is the number of its block return floors.
\end{definition}

The name ``block return'' reflects the fact that block return floors forms blocks of contiguous floors with height difference $1$, and the highest floors of blocks are exactly those counted by $\return_B(P)$.
For the path $P$ bounded by $B$ in \Cref{fig-bounded-path}, every floor except the sixth is a block return floor, hence $\breturn_B(P) = 8$.

When no confusion may arise, we omit the subscript $B$ from $\run_{\c, B}$, $\return_B$, and $\breturn_B$ hereinafter. 

\begin{figure}
  \centering
  \begin{tikzpicture}[
    scale=0.45,
    >=stealth,
    grid/.style={gray, thin},
    boundary/.style={},
    pathA/.style={blue!60, very thick},
    pathB/.style={very thick},
    highlight/.style={red, dotted, very thick}
    ]

    \begin{scope}
      \draw[grid] (0,0) grid (10,10);

      \draw[boundary] (0,0) rectangle (10,10);

      \draw[pathA]
      (0,0) -- (0,2) -- (1,2) -- (1,5) -- (3,5) -- (3,7) -- (6,7)--(6,8)-- (9,8)--(9,10)--(10,10);

      \draw[pathB]
      (-1,0)--(0,0)--(2,0)--(2,2)--(5,2)--(5,5)--(10,5)--(10,10);
    \end{scope}

    \begin{scope}[xshift=15cm]
      \draw[grid] (0,0) grid (10,10);

      \draw[boundary] (0,0) rectangle (10,10);

      \draw[pathA]
      (0,0) -- (0,2) -- (1,2) -- (1,5) -- (3,5) -- (3,7) -- (6,7)--(6,8)-- (9,8)--(9,10)--(10,10);

      \draw[pathB]
      (-1,0)--(0,0)--(2,0)--(2,2)--(3,2)--(3,3)--(5,3)--(5,5)--(10,5)--(10,10);

      \draw[highlight] (3,3) -- (3,7);
      \draw[highlight] (5,3) -- (5,7);
    \end{scope}
  \end{tikzpicture}
  \caption{Examples for the boundary constraint.}
  \label{fig-constraint}
\end{figure}

Let $\alpha = (\alpha_1, \dots, \alpha_m)$ and $\beta = (\beta_1, \dots, \beta_{m'})$ be compositions of $n$.
We say that $\alpha$ is \tdef{constrained} by $\beta$ if the following implication holds for every pair $(i, j) \in [m] \times [m']$:
\begin{equation} \label{eq:boundary-constrain}
  \sum_{k = 1}^{i - 1} \alpha_k \ge \sum_{k = 1}^{j - 1} \beta_k
  \quad \Longrightarrow \quad
  \sum_{k = 1}^{i} \alpha_k \geq \sum_{k = 1}^{j} \beta_k.
\end{equation}
Here, empty sums equal $0$.
Intuitively, it means that the partial sums of $\alpha$ never ``cross'' those of $\beta$ in a way that violates a natural monotonicity condition.
We denote by $\cons(\beta)$ the set of all compositions of $n$ constrained by $\beta$.

\begin{definition} \label{def:boundary-constrain}
  A lattice path $P \in \P_B$ bounded by $B$ is \tdef{boundary constrained} if $\comp(P) \in \cons(\comp(B))$.
  We denote by $\CP_B$ the set of all boundary constrained paths in $\P_B$.
\end{definition}

In geometric terms, \Cref{def:boundary-constrain} forbids the existence of a floor $F$ of $P$ with abscissa from $x_F$ to $x'_F$ and a floor $G$ of $B$ with abscissa from $x_G$ to $x'_G$ such that $x_G \leq x_F < x'_F < x'_G$.
For instance, the first path in \Cref{fig-constraint} is boundary constrained, 
while the second is not, due to the third floor of the bounded path and the fourth floor of the boundary.

For convenience, for a floor $F$ in $P \in \CP_B$ constrained by the boundary $B$, we define the \tdef{first-cover floor} of $F$ to be the leftmost floor of $B$ whose projection on the $x$-axis overlaps that of $F$ by a segment. It is clear that if $F$ shares an $E$-step with $B$, the shared step is always on the first-cover floor of $F$. We have the following simple consequence for boundary constraint.

\begin{proposition} \label{prop:bounded-index}
  For $P \in \CP_B$ constrained by the boundary $B$, let $F$ be the $i$-th floor of $P$ and $F_B$ its first-cover floor. Suppose that $j$ is the index of $F_B$ in $B$. Then
  \begin{enumerate}
  \item $j \geq i$;
  \item For $G$ the $i'$-th floor of $P$ with $i' > i$, and $G_B$ its first cover floor. Let $j'$ be the index of $G_B$ in $B$, then $j' \geq j + (i' - i)$.
  \end{enumerate}
\end{proposition}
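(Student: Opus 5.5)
The plan is to reduce both parts to one \emph{local} claim about two consecutive floors of $P$, and then iterate it. Include the virtual $E$-step in both $P$ and $B$. Then the floors of $B$ split the horizontal range $[-1,m]$ into consecutive intervals ordered left to right, and so do the floors of $P$. A floor $F$ of $P$ with abscissa from $x_F$ to $x'_F$ satisfies $x'_F \ge x_F+1$. The first step is to identify its first-cover floor concretely. The floors of $B$ overlapping the projection of $F$ in a segment are exactly those containing some unit interval $[x,x+1] \subseteq [x_F,x'_F]$, so the leftmost one is the floor of $B$ containing the unit interval $[x_F,x_F+1]$. Hence $F_B$ is the floor of $B$ containing the first $E$-step of $F$. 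Because floors of $B$ are indexed from left to right, the index of this floor is weakly increasing in $x_F$.

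The key claim is this: if $F$ is the $i$-th floor and $G$ the $(i+1)$-th floor of $P$, with first-cover indices $j$ and $j'$, then $j' \ge j+1$. Since $x_G = x'_F > x_F$, monotonicity already gives $j' \ge j$. So it suffices to rule out $j'=j$. Suppose $F_B = G_B$ is a single floor of $B$, with abscissa from $x_{B}$ to $x'_{B}$. It contains $[x_F,x_F+1]$, so $x_B \le x_F$. It also contains $[x'_F, x'_F+1]$, the first step of $G$, so $x'_B \ge x'_F+1 > x'_F$. Thus $x_B \le x_F < x'_F < x'_B$. This is exactly the configuration forbidden by the geometric reformulation of \Cref{def:boundary-constrain}, contradicting $P \in \CP_B$. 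Hence $j' \ge j+1$.

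Part (2) then follows by induction on $i'-i$: chaining the key claim along the floors $i, i+1, \dots, i'$ gives $j' \ge j + (i'-i)$. For part (1), apply (2) with the first floor of $P$ as the starting floor. That floor contains the virtual step $[-1,0]$, and its first-cover floor is the first floor of $B$, also containing the virtual step, so its index is $1$. Then the $i$-th floor has first-cover index at least $1+(i-1)=i$.

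The only delicate point is the identification of the first-cover floor with the $B$-floor containing the first step of $F$, and the handling of the virtual step. In particular, I must check that the strict inequalities in the forbidden configuration really come out strict. The inequality $x'_B > x'_F$ holds precisely because $G$ has at least one $E$-step, which starts at $x'_F$. If one prefers to avoid the geometric reformulation, the same contradiction can be read off \eqref{eq:boundary-constrain} directly. Take $\alpha=\comp(P)$ and $\beta=\comp(B)$ with index $i$ for $F$ and index $j$ for the common $B$-floor. The hypothesis $\sum_{k<i}\alpha_k \ge \sum_{k<j}\beta_k$ holds, while the conclusion $\sum_{k\le i}\alpha_k \ge \sum_{k\le j}\beta_k$ fails.
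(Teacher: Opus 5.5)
Your proof is correct. Part (2) follows the paper's argument: reduce to $i'=i+1$, note $j'\ge j$, and rule out $j'=j$ because $F_B$ would then extend strictly past the right end of $F$, violating the constraint.

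The difference is in part (1).
\begin{itemize}
\item The paper proves (1) directly. It iterates \eqref{eq:boundary-constrain} from the empty sums, taking the same index on both sides, to get $\sum_{k\le \ell}\comp(P)_k \ge \sum_{k\le \ell}\comp(B)_k$ for all $\ell$. It then observes that the $i$-th floor of $B$ starts no later and ends no later than $F$. So that floor either lies entirely to the left of $F$ or is the first-cover floor itself, giving $i\le j$.
\item You instead obtain (1) as a corollary of (2). You anchor the chain at the first floor of $P$, whose first-cover floor is the first floor of $B$ because both contain the virtual step.
\end{itemize}

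Your route is more economical, since one local lemma yields both parts. It also makes explicit two facts the paper uses tacitly: the first-cover floor is the $B$-floor containing the first $E$-step of $F$, and hence its index is weakly increasing along $P$ (the paper's ``clearly $j'\ge j$''). Your direct check against \eqref{eq:boundary-constrain}, rather than the informal geometric restatement, is also a good touch.

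The paper's route for (1), in exchange, produces a global partial-sum domination: the $\ell$-th floor of $P$ ends no earlier than the $\ell$-th floor of $B$. This is a property of $\cons(\comp(B))$ that makes no reference to first-cover floors.
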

\begin{proof}
  For the first point, by \Cref{eq:boundary-constrain}, iterating from the empty sum, for all $\ell$ we have
  \[
    \sum_{k=1}^{\ell} \comp(P)_k \geq \sum_{k=1}^{\ell} \comp(B)_k.
  \]
  We observe that, for $\ell = i$ (resp. $\ell = i - 1$), the left-hand side of the equation above is the $x$-coordinate of the right (resp. left) endpoint of $F$, while the right-hand side is that of the $i$-th floor $F_B'$ of $B$. Hence, the projection of $F_B'$ on the $x$-axis is either totally to the left of that of $F$, thus also to the left of $F_B$, or overlaps with that of $F$ to the left, meaning that $F_B' = F_B$ by the minimality of $F_B$. In both cases, the index $i$ of $F_B'$ is at most the index $j$ of $F_B$.

  For the second point, we only need to show it for $i' = i + 1$. Clearly $j' \geq j$. However, if $j' = j$, then the $x$-axis projection of $F_B$ has its rightmost endpoint strictly to the right of the leftmost endpoint of the $(i + 1)$-st floor of $P$, which is the rightmost endpoint of the $x$-axis projection of $F$. This violates the boundary constraint. Thus, $j' > j$.
\end{proof}

In some cases $\CP_B$ equals $\P_B$.
\begin{proposition}\label{prop:bounded}
  If a boundary path $B$ contains no two consecutive $E$-steps, then $\CP_B = \P_B$.
\end{proposition}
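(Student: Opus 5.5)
Since $\CP_B \subseteq \P_B$ by definition, it suffices to show that every $P \in \P_B$ satisfies $\comp(P) \in \cons(\comp(B))$. Our plan is to use the geometric reformulation stated after \Cref{def:boundary-constrain}: $P$ fails to be boundary constrained exactly when some floor $F$ of $P$, spanning abscissae $x_F$ to $x'_F$, and some floor $G$ of $B$, spanning $x_G$ to $x'_G$, satisfy $x_G \le x_F < x'_F < x'_G$. We will show that the hypothesis on $B$ makes such a pair impossible.

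First we pin down the floors of $B$. Since $B$ has no two consecutive $E$-steps, every floor of $B$ has length $1$, with one caveat: the virtual $E$-step of height $1$ placed before the origin. If $B$ begins with an $E$-step, this virtual step merges with it into a floor of length $2$ spanning $[-1,1]$. So each floor $G$ of $B$ spans either an interval $[x,x+1]$, or the interval $[-1,1]$ in the exceptional case.

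In the generic case $x'_G - x_G = 1$, the strict inequalities would require $x_G \le x_F < x'_F < x_G + 1$. This leaves no room for a floor of positive length with integer endpoints, so no violation occurs.

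The main obstacle is the exceptional initial floor $[-1,1]$, together with the matching convention that $P$ also begins with the virtual step. A violation would need $-1 \le x_F < x'_F < 1$, which forces $x_F = -1$ and $x'_F = 0$. This describes a floor of $P$ consisting only of the virtual step, that is, $P$ starts with an $N$-step. We must rule this out. Because $P$ lies weakly below $B$, at abscissa $0$ the path $P$ cannot rise above $B$ before $B$ does. Since $B$ starts with $E$, its first step keeps it at height $0$ up to $x=1$. Hence $P$ cannot start with an $N$-step, as that would put $P$ strictly above $B$ at $x=0$. So the first floor of $P$ also contains the virtual step together with at least one real $E$-step, giving $x'_F \ge 1$, and no violation arises.

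To be safe, we would also cross-check directly against \eqref{eq:boundary-constrain}. Write $\comp(B) = (b_1,\dots)$, where all $b_j = 1$ except possibly one $b_j = 2$. Suppose $\sum_{k<i}\alpha_k \ge \sum_{k<j}\beta_k =: s$. Then $\sum_{k\le i}\alpha_k \ge s+1$, and $s+1 = \sum_{k\le j}\beta_k$ whenever $\beta_j = 1$. The case $\beta_j = 2$ is handled by the argument above.
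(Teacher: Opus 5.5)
Your proof is correct. In the generic case it is the paper's argument in geometric form: a floor of $B$ of length one cannot strictly contain a floor of $P$. Equivalently, $c_i\ge 1$ already gives the implication in \eqref{eq:boundary-constrain} whenever $\beta_j=1$.

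The difference is the virtual step, and there your version is more careful than the paper's. The paper simply asserts $\comp(B)=(1,\dots,1)$ and never uses that $P$ lies below $B$. Under the virtual-step convention, that assertion holds only when $B$ begins with an $N$-step. When $B$ begins with an $E$-step, $\comp(B)=(2,1,\dots,1)$, and the pair $(i,j)=(1,1)$ then requires $c_1\ge 2$. This is false for general compositions and follows only from boundedness, which is exactly what your exceptional-case argument supplies.

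This case is not vacuous. The diagonal boundary $B_{m,n}$ used in Proposition~\ref{prop:ration} always begins with an $E$-step, so your treatment is the one that actually justifies that application.
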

\begin{proof}
  Such a boundary $B$ has every floor of length $1$, thus $\comp(B) = (1, \dots, 1)$.
  For $P \in \P_B$, let $\comp(P) = (c_1, \dots, c_m)$, with $c_i \geq 1$ for all $i$.
  For valid $i, j > 0$, if $\sum_{k = 1}^{i - 1} c_k \ge j - 1 = \sum_{k = 1}^{j - 1} 1$ with $j > 0$, then $\sum_{k=1}^{i} c_k \ge j = \sum_{k=1}^{j} 1$. Hence, $\comp(P) \in \cons(\comp(B))$.
\end{proof}

\section{Symmetries of statistics on bounded lattice paths}\label{sec3}

In this section, we mainly show that, for a composition $\c$ satisfying a certain condition of admissibility, $\run_{\c}$ and $\return$ are symmetric on $\CP_B$.
For a clear exhibition, we start in \Cref{subsec3.1} with an involution $\Phi$ realizing the symmetry of $(\run, \return)$ over $\CP_B$ without $\c$.
As a detour, we give in \Cref{subsec3.2} an involution $\Theta$ based on $\Phi$ for the symmetry of $(\run, \breturn)$ on $\P_B$.
Subsection~\ref{subsec3.3} presents a third involution $\Psi$ showing the symmetry of $(\brun, \return)$ on $\CP_B$.
Finally, in Subsection~\ref{subsec3.4}, we modify $\Psi$ to show the symmetry of $(\run_{\c}, \return)$ for admissible $\c$.

The algorithms at the core of all these involutions share the same scheme: floors are lifted after some initial floors, transferring $1$ from a $\run$-like statistic to a $\return$-like statistic.
\Cref{tab:algorithms} summarizes the algorithms and the statistic pair associated with each involution.

\begin{table}[htbp]
  \centering
  \caption{Algorithms and associated statistic pairs for distinct involutions}
  \label{tab:algorithms}
  \begin{tabular}{cccc}
    \toprule 
    Statistic Pair & Section & Involution & Algorithms \\
    \midrule
    $(\run,\return)$ & \Cref{subsec3.1} & $\Phi$ & \Cref{constr:lifting,constr:descending} \\
    $(\run,\breturn)$ & \Cref{subsec3.2} & $\Theta$ & \Cref{constr:lifting-theta,constr:descending-theta}  \\
    $(\brun,\return)$ & \Cref{subsec3.3} & $\Psi$ & \Cref{constr:lifting-psi,constr:descending-psi}  \\
    $(\run_\c,\return)$ & \Cref{subsec3.4} & $\Psi_\c$ & \Cref{constr:lifting-psi-c,constr:descending-psi-c} \\
    \bottomrule
  \end{tabular}
\end{table}

\subsection{The symmetry of $(\run,\return)$ on $\CP_B$} \label{subsec3.1}

Given a path $P \in \P_B$ bounded by $B$, its \tdef{run floor} is the $\run(P)$-th floor.
The \tdef{run-return} statistic of $P$, denoted $\rr(P)$, is the number of floors of $P$ counted in both $\run(P)$ and $\return(P)$.
For instance, both bounded lattice paths in \Cref{fig:phi} have run-return value $3$.

\begin{lemma}\label{lem:phi}
  For a path $P \in \CP_B$, the floors counted by $\rr(P)$ are initial and consecutive.
\end{lemma}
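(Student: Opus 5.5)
The plan is to show that if the $i$-th floor of $P$, with $i \le \run(P)$, shares a step with $B$, then so does every earlier floor. Then the floors counted by $\rr(P)$ are exactly floors $1, \dots, r$ for some $r$, which is the claim.

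First I would record what $\run(P) = k$ means concretely. Since $\fh(P)$ is strictly increasing and starts with $1$, the condition $[k] \subseteq \fh(P)$ forces the first $k$ floors of $P$ to have heights exactly $1, 2, \dots, k$. Geometrically, each of these floors is followed by a single $N$-step, except possibly the $k$-th. The first floor contains the virtual $E$-step at height $1$. Since the virtual step precedes every path, including $B$, the first floor always counts toward $\return(P)$ and is always a run floor. So floor $1$ always contributes to $\rr(P)$.

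The key step is the following. Let $i \le \run(P)$, and suppose floor $i$ of $P$ (height $i$) shares an $E$-step with $B$. I would show that floor $i-1$ of $P$ (height $i-1$) also shares an $E$-step with $B$. Let $x$ be the left endpoint of floor $i$ of $P$. Since $i \le \run(P)$, the path $P$ goes from $(x, i-2)$ to $(x, i-1)$ by a single $N$-step, and floor $i-1$ ends at abscissa $x$. Because $B$ contains an $E$-step at height $i$ (ordinate $i-1$) somewhere in $[x, x']$, and $P$ lies weakly below $B$, the boundary at abscissa $x$ is at ordinate at least $i-1$. Now I would use the boundary constraint of \Cref{def:boundary-constrain}, or the indexing of first-cover floors in \Cref{prop:bounded-index}, to rule out that $B$ jumps above $P$'s floor $i-1$ over its whole span. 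The overlap of floor $i$ with $B$ lies on its first-cover floor $F_B$. The constraint says floor $i$ of $P$ cannot be strictly inside a floor of $B$. Then I would argue that $B$ must reach ordinate $i-1$ at an abscissa at most $x$. Otherwise $B$ would be strictly higher at $x$, so its floor at height $i$ would start after $x$, and boundedness would conflict with the shared step.

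The main obstacle is the degenerate case where $B$ sits at height $i-1$ only to the left of $x$, so that $B$'s floor at height $i-1$ lies to the left of $P$'s floor $i-1$. I would handle it with the counting inequality $\sum_{k\le \ell}\comp(P)_k \ge \sum_{k\le\ell}\comp(B)_k$ from the proof of \Cref{prop:bounded-index}. Applied with $\ell = i-1$ and $\ell = i-2$, it forces the $(i-1)$-st floor of $B$ to start at or before floor $i-1$ of $P$. Combined with the heights being $1, \dots, i$ on both sides, it should also force the two floors to overlap on a segment.

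Once the key step is established, induction downward from any $i$ counted in $\rr(P)$ gives that all floors $1, \dots, i$ are counted. Hence the counted set is an initial segment.
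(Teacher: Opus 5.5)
Your overall plan matches the paper's: among the first $\run(P)$ floors, returns propagate downward. The paper proves the contrapositive, that a non-return floor $i<\run(P)$ forces floor $i+1$ to be a non-return, using \Cref{prop:bounded-index}(2). The gap is exactly in the case you flag as the main obstacle.

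Let floor $i-1$ of $P$ span $[y,x]$, and let $G$, the floor of $B$ at height $i$, span $[g,g']$. Boundedness gives $g\le x$ and the shared step gives $x<g'$; what must be shown is $y<g$. The diagonal inequalities $\sum_{k\le\ell}\comp(P)_k\ge\sum_{k\le\ell}\comp(B)_k$ do not give this, even for all $\ell$ and even with equal floor heights $1,\dots,i$. For instance, take $B=NENEEEN$ and $P=ENENEEN$, each preceded by the virtual $E$-step. Then $\comp(P)=(2,1,2)$, $\comp(B)=(1,1,3)$, $\fh(P)=\fh(B)=(1,2,3)$ and $\run(P)=3$. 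All partial-sum inequalities hold ($2\ge1$, $3\ge2$, $5\ge5$), and the second floor $[0,1]$ of $B$ starts before the second floor $[1,2]$ of $P$. Yet floor $3$ of $P$ shares steps with $B$ while floor $2$ does not. This $P$ is excluded from $\CP_B$ only by an off-diagonal instance of \eqref{eq:boundary-constrain}: its second floor lies strictly inside the third floor $[1,4]$ of $B$. So your claim that the counting inequality ``should also force the two floors to overlap'' is false as stated.

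The repair is to apply the constraint to floor $i-1$ of $P$ and to $G$, not to floor $i$ as in your key step. If $g\le y$, then $g\le y<x<g'$ places floor $i-1$ strictly inside $G$, which is forbidden; so $y<g$. On $[y,g]$ the boundary has not yet reached $G$, so its $E$-steps there have height at most $i-1$. Boundedness forces height at least $i-1$ there. Hence $B$ runs along floor $i-1$ of $P$ on $[y,g]$, which gives the shared step.

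This argument also covers the case where $B$ has no floor at height $i-1$. So you do not need the unjustified claim that the first $i$ floors of $B$ have heights $1,\dots,i$. That claim would in any case follow from \Cref{prop:bounded-index}(1): the first-cover index $j$ of floor $i$ satisfies $j\ge i$ and $i=\fh_j(B)\ge j$.

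Note also that the direction in your key step is reversed. If $G$ began after $x$, then $B$ would be strictly \emph{lower} over $[x,x+1]$, contradicting boundedness; that is what gives $g\le x$.
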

\begin{proof}
  Only the first $\run(P)$ floors may contribute to $\rr(P)$.
  Let $0 < i < \run(P)$ such that the $i$-th floor $F$ of $P$ does not contribute to $\rr(P)$.
  Let the $j$-th floor $F_B$ of $B$ be the first-cover floor of $F$.
  Since $F$ is not counted in $\return(P)$, we have $\fh_i(P) < \fh_j(B)$.
  Since $i + 1 \le \run(P)$, the definition of $\run(P)$ ensures $\fh_{i + 1}(P) = \fh_i(P) + 1$, meaning that $\fh_{i + 1}(P) < \fh_j(B) + 1 < \fh_{j + 1}(B)$.
  By \Cref{prop:bounded-index}(2), the first-cover floor of the $(i + 1)$-st floor of $P$ has index in $B$ at least $j + 1$, thus height at least $\fh_{j + 1}(B)$.
  This implies the $(i + 1)$-st floor of $P$ does not contribute to $\return(P)$, hence neither to $\rr(P)$.
  The claim follows as the absence of the $i$-th floor in $\rr(P)$ implies that of the $(i + 1)$-st floor.
\end{proof}

For any boundary path $B$, we define
\begin{align*}
  \CP_B(k,\ell)  &:= \{ P \in \CP_B : \run(P)=k,\ \return(P)=\ell \},\\
  \CP_B^r(k,\ell) &:= \{ P \in \CP_B : \run(P)=k,\ \return(P)=\ell,\ \rr(P)=r \}.
\end{align*}

\begin{construction} \label{constr:lifting}
  For a boundary $B$ and positive integers $r, \ell, k$ with $r \leq \ell$ and $r < k$, we define a map $\phi$ from $\CP_B^r(k, \ell)$ to $\CP_B^r(k - 1, \ell + 1)$ as follows.
  For $P \in \CP_B^r(k, \ell)$, we do the following:
  \begin{enumerate}
  \item Set the run floor of $P$ as the current floor;
  \item Lift the current floor up until hitting $B$ at an $E$-step or up to one unit below the next floor;
  \item If the lifted floor hits $B$ at an $E$-step, terminate; otherwise, set the next floor as the current floor and go to step 2.
  \end{enumerate}
  We set $\phi(P)$ to be the path thus obtained after the procedure.
\end{construction}

\begin{construction} \label{constr:descending}
  For a boundary $B$ and positive integers $r, \ell, k$ with $r < \ell$ and $r \leq k$, we define a map $\overline{\phi}$ from $\CP_B^r(k, \ell)$ to $\CP_B^r(k + 1, \ell - 1)$ as follows.
  For $P' \in \CP_B^r(k, \ell)$, let $\rf(P')$ be the index of the lowest one above the run floor of $P'$ that hits $B$ at an $E$-step.
  We then do the following:
  \begin{enumerate}
  \item Set the current floor to be $\rf(P')$-th floor of $P'$;
  \item Lower the current floor down to one unit above the preceding floor;
  \item If the preceding floor is the run floor of $P'$, terminate; otherwise, set the preceding floor as the current floor and go to step 2.
  \end{enumerate}
  We set $\overline{\phi}(P')$ to be the path thus obtained after the procedure.
\end{construction}

See \Cref{fig:phi} for an example of the application of $\phi$ and $\overline{\phi}$. Note that in \Cref{constr:lifting,constr:descending}, the lifting and lowering of a floor may not change its height; see the fifth floor in \Cref{fig:phi}.

\begin{lemma}\label{lem:phi2}
  For a boundary $B$ and positive integers $r, \ell, k$ such that $r \leq \ell$ and $r < k$, the map $\phi$ is a composition-type-preserving bijection from $\CP_B^r(k, \ell)$ to $\CP_B^r(k - 1, \ell + 1)$, with $\overline{\phi}$ its inverse.
\end{lemma}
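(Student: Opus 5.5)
We will prove the claim in three stages: well-definedness of $\phi$ (its output lies in $\CP_B^r(k-1,\ell+1)$ and has the same composition-type), well-definedness of $\overline{\phi}$ on $\CP_B^r(k-1,\ell+1)$, and the verification that the two maps are mutually inverse.

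\textbf{Well-definedness of $\phi$.} Let $P\in\CP_B^r(k,\ell)$ and write $h_i$ for the height of the $i$-th floor. Since only heights change, the lengths and abscissae of floors are untouched. Hence $\comp$ is preserved, and boundary constraintness (which depends only on $\comp(P)$ and $\comp(B)$) is preserved, provided the result stays weakly below $B$. The lifting of the current floor stops at the first height where it touches $B$ at an $E$-step; before that point it lies strictly below $B$ along its whole projection, since the boundary above a floor's projection is a staircase. So the new path remains in $\P_B$.

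\textbf{Termination of $\phi$.} We must show that the procedure hits $B$ at some floor. The last floor of $P$ can always be lifted to height $n+1$, the height of the last floor of $B$ (after the final $N$-steps), and there it touches $B$. Thus, if no earlier floor hits, the last one does.

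\textbf{Statistics after $\phi$.} Since $r<k$, Lemma~\ref{lem:phi} shows that the $k$-th (run) floor of $P$ is not a return floor, and neither is any floor after it that is lifted without touching $B$. Let the procedure stop at floor $s\ge k$. The lifted floors $k,\dots,s-1$ end at heights $h_{i+1}-1$, and floor $s$ becomes a new return floor. Floors $k,\dots,s-1$ were not returns and still are not. The floor $s$ was not a return before, for otherwise no lift would be needed; we would then need the convention that lifting by zero terminates. That case must be excluded: floor $s$ being already a return would contradict it being above the run floor and reached only after failed lifts. This is the point to check carefully.

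So $\return$ increases by exactly $1$. For $\run$: floor $k$ is raised to $h_{k+1}-1\ge h_k+1$ if $h_{k+1}>h_k+1$. The case $h_{k+1}=h_k+1$ is impossible since $\run=k$. Hence height $h_k=k$ disappears from $\fh$. We need to argue that no other floor acquires height $k$; this holds because all heights are only increased and floors after $k$ already exceed $k$. Thus $\run$ becomes $k-1$. Finally, $\rr$ is unchanged: the first $r$ floors are untouched, and by Lemma~\ref{lem:phi} together with $r<k$, the $r$ initial run-return floors remain exactly those, now with $r\le k-1$.

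\textbf{The inverse $\overline{\phi}$.} Symmetrically, for $P'\in\CP_B^r(k-1,\ell+1)$, the floor $\rf(P')$ exists because $r<\ell+1$ and the return floors beyond the first $r$ lie above the run floor. We show $\overline{\phi}\circ\phi=\mathrm{id}$. In $\phi(P)$ the new run floor is $k-1$. Floors $k,\dots,s-1$ sit at one unit below their successors, so they are not returns. Hence the lowest return floor above the run floor is exactly $s$. Lowering floor $s$ to one above floor $s-1$ restores $h_s$: originally $h_s\ge h_{s-1}+1$, and the $\phi$-value of floor $s-1$ was $h_s-1$ whenever the lift passed. Iterating downward to floor $k$, which drops to $h_{k-1}+1=k$, recovers $P$. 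The reverse composition is handled the same way: the lowered floors lie strictly below $B$ except at their new positions, and lifting recovers the originals.

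\textbf{Main obstacle.} The main obstacle is the bookkeeping showing that the intermediate lifted floors do not touch $B$ before reaching one below the next floor, and dually for lowering. This uses Proposition~\ref{prop:bounded-index}(2): successive floors have strictly increasing first-cover indices, so $B$ above floor $i$ has height at least that above floor $i-1$ plus the boundary increment. I would first try arguing through first-cover floor heights exactly as in the proof of Lemma~\ref{lem:phi}.
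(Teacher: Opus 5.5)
Your three-stage plan is the same as the paper's, and your bookkeeping for $\run$, $\rr$, termination, and $\overline{\phi}\circ\phi=\mathrm{id}$ is fine. The gap is the step you yourself flag: you must show that the floor $s$ where $\phi$ stops was not already a return floor of $P$. Only then is it lifted by a positive amount, so that $\return$ really increases. Your reason (``would contradict it being above the run floor and reached only after failed lifts'') is not an argument. Moreover, no argument that avoids the boundary constraint can work, because the claim fails on $\P_B$. Take $B=N^4ENE^3$ and $P=NEN^3E^2NE$, so that $(\run,\return,\rr)(P)=(2,2,1)$. The procedure lifts the height-$2$ floor to $4$ and leaves the height-$5$ floor in place, neither touching $B$. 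It then stops at the height-$6$ floor, which already touches $B$, so $\return$ does not change. The culprit is the height-$5$ floor of $P$ lying strictly inside the height-$6$ floor of $B$, which is precisely what $\comp(P)\in\cons(\comp(B))$ forbids. Your ``main obstacle'' paragraph reaches for Proposition~\ref{prop:bounded-index}(2), but aims it at a non-issue. That the intermediate floors do not touch $B$ before reaching one unit below their successor is just the definition of $s$, and for $\phi\circ\overline{\phi}$ it is the minimality of $\rf(P')$.

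The missing fact is this: in a path of $\CP_B$, if a floor at height $h$ does not touch $B$ and the next floor has height $h+1$, then the next floor does not touch $B$ either. Let $j$ be the first-cover index of the first floor; non-contact gives $\fh_j(B)\ge h+1$. Proposition~\ref{prop:bounded-index}(2) gives the next floor a first-cover index $j'\ge j+1$. So over that floor $B$ lies at height at least $\fh_{j'}(B)>\fh_j(B)\ge h+1$. Lifting preserves projections, hence first-cover floors and the constraint, so this applies mid-procedure to floors $s-1$ and $s$. The paper proves the same point by comparing with the floor of $B$ preceding the contacted one.

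The dual statement is missing in your inverse direction. You never show $\overline{\phi}(P')\in\CP_B^r(k,\ell)$, in particular that $\rf(P')$ is lowered by a positive amount. Without it, $\phi\circ\overline{\phi}=\mathrm{id}$ does not give surjectivity of $\phi$ onto $\CP_B^r(k-1,\ell+1)$. The same fact supplies it. The floor preceding $\rf(P')$ is either the run floor, where the gap is at least $2$ by maximality of $\run$, or a non-return floor by minimality of $\rf(P')$. In the latter case a gap of $1$ would prevent $\rf(P')$ from touching $B$.

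A minor point: $B$'s last floor is not at height $n+1$ when $B$ ends with $N$-steps. Termination still holds, because the last floor of $P$ has no successor and lies under the last floor of $B$.
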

\begin{proof}
  We first show that $\phi$ in \Cref{constr:lifting} is well-defined and preserves composition-type.
  Let $P \in \CP_B^r(k, \ell)$.
  Since \Cref{constr:lifting} always terminates, $\phi(P)$ is well-defined.
  As $r < k$, \Cref{lem:phi} implies that the run floor of $P$ does not share any $E$-step with $B$, and by the maximality of the run floor, the height difference between it and the next floor is at least $2$.
  Hence, its height changes in \Cref{constr:lifting}, meaning that $\run(\phi(P)) = \run(P) - 1 = k - 1$.
  Noting that $\phi(P)$ remains boundary constrained by $B$ as $\comp(\phi(P)) = \comp(P)$, we now show that the last floor $F$ lifted in \Cref{constr:lifting} changed its height.
  Otherwise, in $\phi(P)$, $F$ shares an $E$-step with some floor $G$ of $B$, while the floor $F'$ preceding $F$ does not.
  Moreover, the height of $F$ is $h_{F} = h_{G} = h_{F'} + 1$, with $h_{F'}$ (resp. $h_G$) the height of $F'$ (resp. $G$).
  Let $G'$ be the floor of $B$ preceding $G$ of height $h_{G'} < h_G$.
  If $F'$ is directly under $G'$, then $h_{F'} \leq h_{G'} < h_G = h_{F'} + 1$, meaning that $h_{F'} = h_{G'}$, which is impossible since $F'$ shares no $E$-step with $B$.
  But otherwise, $F'$ and $G$ violate the fact that $P$ and thus $\phi(P)$ is boundary constrained by $B$.
  Therefore, $F$ must have changed its height in \Cref{constr:lifting}, hence $\return(\phi(P)) = \return(P) + 1 = \ell + 1$.
  We conclude that $\phi(P) \in \CP_B^r(k - 1, \ell + 1)$, meaning that $\phi(P)$ has the correct codomain.
  As only floor heights are changed, $\phi$ clearly preserves composition-type.

  We now show the same for $\overline{\phi}$.
  Let $P' \in \CP_B^r(k - 1, \ell + 1)$.
  As $r < \ell + 1$, \Cref{lem:phi} guarantees that at least one floor after the run floor of $P'$ shares an $E$-step with $B$, meaning that the first step in \Cref{constr:descending} is feasible.
  The rest of \Cref{constr:descending} always terminates, thus $\overline{\phi}(P')$ is well-defined.
  Now consider the first floor $F$ of $P'$ that is lowered and the floor $F'$ before it.
  By \Cref{constr:descending}, $F$ overlaps a floor $G$ of $B$ on an $E$-step.
  As $P'$ is boundary constrained by $B$, the projection of $F'$ on the $x$-axis does not lie entirely inside that of $G$, and $F'$ also does not share any $E$-step of $B$ due to the minimality of $F$.
  Hence, the height difference between $F$ and $F'$ is at least $2$. Thus $F$ is indeed lowered, and hence $\return(\overline{\phi}(P')) = \return(P') - 1$.
  Moreover, by the termination condition of \Cref{constr:descending}, we have $\run(\overline{\phi}(P')) = \run(P') + 1$, hence $\overline{\phi}(P')$ has the correct codomain.
  It is also clear that $\overline{\phi}$ preserves composition-type as only floor heights are changed.

  We now show the bijectivity of $\phi$.
  Clearly, $\overline{\phi}(\phi(P)) = P$, as the procedure to compute $\phi(P)$ stops at a floor blocked by $B$, which is the first such floor after the run floor of $\phi(P)$, and $\overline{\phi}$ reverses the changes of heights made by $\phi$.
  To show that $\phi(\overline{\phi}(P')) = P'$, suppose that $P = \overline{\phi}(P')$.
  We only need to show that \Cref{constr:lifting} on $P$ stops at exactly the $\rf(P')$-th floor.
  If it stops prematurely, then there is a floor in $P'$ before the $\rf(P')$-th one that hits the boundary $B$ on an $E$-step, which is impossible by the minimality of $\rf(P')$.
  Hence, $\phi(\overline{\phi}(P')) = P'$, with which we conclude.
\end{proof}

\begin{figure}
  \centering
  \begin{tikzpicture}[
    scale=0.45,
    >=stealth,
    grid/.style={gray!60, thin},
    boundary/.style={},
    pathA/.style={blue!60, very thick},
    pathB/.style={very thick},
    highlight/.style={red, dotted, very thick}
    ]

    \begin{scope}
      \draw[grid] (0,0) grid (9,15);
      \draw[boundary] (0,0) rectangle (9,15);
      \draw[pathA]
      (0,0) -- (0,1) -- (1,1) -- (1,2) -- (2,2)
      -- (2,4) -- (3,4) -- (3,5) -- (4,5) -- (4,8)
      -- (5,8) -- (5,10) -- (6,10) -- (6,11) -- (8,11) -- (8,14) -- (8,15) -- (9,15);

      \draw[highlight] (3,3) rectangle (4,4);
      \draw[highlight] (5,6) rectangle (6,8);
      \draw[highlight] (6,9) rectangle (8,11);

      \draw[pathB]
      (-1,0)--(0,0) -- (0,1) -- (1,1) -- (1,2) -- (3,2) -- (3,3) -- (4,3) -- (4,5) -- (5,5) -- (5,6) -- (6,6)
      -- (6,9) -- (8,9) -- (8,12) -- (9,12) -- (9,15);
    \end{scope}

    \begin{scope}[xshift=11.5cm, yshift=8cm]
      \draw[->, thick] (0,0) -- (1.2,0) node[midway,above] {$\phi$};
      \draw[<-, thick] (0,-0.4) -- (1.2,-0.4) node[midway,below] {$\phi^{-1}$};
    \end{scope}

    \begin{scope}[xshift=15cm]
      \draw[grid] (0,0) grid (9,15);
      \draw[boundary] (0,0) rectangle (9,15);
      \draw[pathA]
      (0,0) -- (0,1) -- (1,1) -- (1,2) -- (2,2)
      -- (2,4) -- (3,4) -- (3,5) -- (4,5) -- (4,8)
      -- (5,8) -- (5,10) -- (6,10) -- (6,11) -- (8,11) -- (8,14) -- (8,15) -- (9,15);

      \draw[highlight] (3,3) rectangle (4,4);
      \draw[highlight] (5,6) rectangle (6,8);
      \draw[highlight] (6,9) rectangle (8,11);

      \draw[pathB]
      (-1,0)--(0,0) -- (0,1) -- (1,1) -- (1,2) -- (3,2) -- (3,4) -- (4,4) -- (4,5) -- (5,5) -- (5,8) -- (6,8)
      -- (6,11) -- (8,11) -- (8,12) -- (9,12) -- (9,15);
    \end{scope}
  \end{tikzpicture}
  \caption{An example of the bijection $\phi$.}
  \label{fig:phi}
\end{figure}

\begin{theorem}\label{thm:Phi}
  For any boundary path $B$, we have the symmetry
  \begin{equation}
    \sum_{P \in \CP_B} q^{\run(P)} t^{\return(P)} = \sum_{P \in \CP_B} q^{\return(P)} t^{\run(P)}.
  \end{equation}
\end{theorem}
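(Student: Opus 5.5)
We will build the involution $\Phi$ by refining $\CP_B$ according to the run-return statistic and composing the maps of \Cref{lem:phi2}. It suffices to show that, for every fixed $r$ and all $k,\ell$, the sets $\CP_B^r(k,\ell)$ and $\CP_B^r(\ell,k)$ are equinumerous; summing over $r$ then gives the claimed symmetry.

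Fix $r$. By definition $\rr(P)\le \min(\run(P),\return(P))$, so $\CP_B^r(k,\ell)$ is empty unless $r\le k$ and $r\le \ell$. We first check the boundary case: \Cref{lem:phi} says the floors counted by $\rr$ form an initial consecutive segment. Hence when $k=r$ there is nothing to transfer, and when $k>r$ the run floor itself is not a return floor. Now assume without loss of generality that $k>\ell\ge r$. Applying \Cref{lem:phi2} iteratively gives a chain of bijections
\[
\CP_B^r(k,\ell)\xrightarrow{\phi}\CP_B^r(k-1,\ell+1)\xrightarrow{\phi}\cdots\xrightarrow{\phi}\CP_B^r(\ell,k).
\]
This needs the hypotheses $r\le \ell'$ and $r<k'$ at each intermediate stage $(k',\ell')=(k-i,\ell+i)$ with $0\le i<k-\ell$. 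Both hold, since $k'>\ell\ge r$ and $\ell'\ge \ell\ge r$. The composition $\phi^{k-\ell}$ is therefore a bijection $\CP_B^r(k,\ell)\to\CP_B^r(\ell,k)$, with inverse $\overline{\phi}^{\,k-\ell}$.

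We define $\Phi(P)=\phi^{k-\ell}(P)$ if $k>\ell$, $\Phi(P)=\overline{\phi}^{\,\ell-k}(P)$ if $k<\ell$, and $\Phi(P)=P$ if $k=\ell$, where $k=\run(P)$ and $\ell=\return(P)$. To see that the $\overline\phi$ iteration is legal when $k<\ell$, use the inverse direction of \Cref{lem:phi2}: each step requires $r<\ell'$ and $r\le k'$, and these hold throughout since $\ell'>k\ge r$. Because $\phi$ and $\overline{\phi}$ are mutually inverse, $\Phi$ is an involution on $\CP_B$ exchanging $\run$ and $\return$, and it also preserves $\rr$ and composition-type.

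Almost all of the work has already been done in \Cref{lem:phi,lem:phi2}. The only remaining care is the index bookkeeping, namely that the strict versus weak inequalities in the hypotheses of \Cref{constr:lifting,constr:descending} stay satisfied along the whole chain. The crucial point there is that $\rr$ is invariant under $\phi$, so the same $r$ bounds every intermediate pair $(k',\ell')$.
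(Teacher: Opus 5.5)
Your proposal is correct and is essentially the paper's proof: partition $\CP_B$ into the sets $\CP_B^r(k,\ell)$ and set $\Phi=\phi^{k-\ell}$, with negative powers read as powers of $\overline{\phi}$, so that \Cref{lem:phi2} makes $\Phi$ an involution exchanging $\run$ and $\return$. Your explicit check that $r\le\ell'$ and $r<k'$ (for $\phi$), respectively $r<\ell'$ and $r\le k'$ (for $\overline{\phi}$), hold at every intermediate stage is bookkeeping that the paper leaves implicit.
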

\begin{proof}
  The set $\CP_B$ is partitioned into $\CP_B^r(k, \ell)$ for positive integers $k, \ell \ge r$.
  For any $\alpha \in \CP_B^r(k, \ell)$, we set $\Phi(\alpha) := \phi^{k-\ell}(\alpha)$; see \Cref{fig:Phi} for an example.
  It follows from \Cref{lem:phi2} that $\Phi$ is an involution exchanging $\run$ and $\return$, which implies our claim.
\end{proof}

\begin{remark}
  \Cref{thm:Phi} does not hold without the boundary constraint, as it is crucial for the decrement of $\return$ in $\overline{\phi}$ as shown in the proof of \Cref{lem:phi2}. 
    For example, when the boundary path is $NNEE$, the bounded path $NENE$ has run $3$, but there is no bounded path in $\P_B$ whose return is $3$.
\end{remark}

\begin{figure}
  \centering
  \begin{tikzpicture}[
    scale=0.35,
    >=stealth,
    grid/.style={gray!60, thin},
    boundary/.style={},
    pathA/.style={blue!60, very thick},
    pathB/.style={very thick},
    highlight/.style={red, dotted, very thick}
    ]

    \begin{scope}
      \draw[grid] (0,0) grid (8,11);
      \draw[boundary] (0,0) rectangle (8,11);
      \draw[pathA]
      (0,0) -- (0,2) -- (1,2) -- (1,4) -- (3,4)
      -- (3,6) -- (4,6) -- (4,8) -- (6,8) -- (6,10)
      -- (7,10) -- (7,11) -- (8,11);

      \draw[highlight] (3,3) rectangle (5,4);
      \draw[highlight] (6,6) rectangle (7,8);
      \draw[highlight] (7,9) rectangle (8,11);

      \draw[pathB]
      (-1,0)--(0,0) -- (0,1) -- (2,1) -- (2,2) -- (3,2) -- (3,3) -- (5,3) -- (5,5) -- (6,5) -- (6,6) -- (7,6)
      -- (7,9) -- (8,9) -- (8,11);

      \node at (4,-1){$(4,1)$};
    \end{scope}

    \begin{scope}[xshift=9cm, yshift=5cm]
      \draw[->, thick] (0,0) -- (1.2,0) node[midway,above] {$\phi$};
      \draw[<-, thick] (0,-0.4) -- (1.2,-0.4) node[midway,below] {$\phi^{-1}$};
    \end{scope}

    \begin{scope}[xshift=11cm]
      \draw[grid] (0,0) grid (8,11);
      \draw[boundary] (0,0) rectangle (8,11);
      \draw[pathA]
      (0,0) -- (0,2) -- (1,2) -- (1,4) -- (3,4)
      -- (3,6) -- (4,6) -- (4,8) -- (6,8) -- (6,10)
      -- (7,10) -- (7,11) -- (8,11);

      \draw[highlight] (3,3) rectangle (2,2);
      \draw[highlight] (5,5) rectangle (6,7);
      \draw[highlight] (6,8) rectangle (7,10);

      \draw[pathB]
      (-1,0)--(0,0) -- (0,1) -- (2,1) -- (2,2) -- (3,2) -- (3,4) -- (5,4) -- (5,5) -- (6,5) -- (6,8) -- (7,8)
      -- (7,11) -- (8,11);

      \node at (4,-1){$(3,2)$};
    \end{scope}

    \begin{scope}[xshift=20cm, yshift=5cm]
      \draw[->, thick] (0,0) -- (1.2,0) node[midway,above] {$\phi$};
      \draw[<-, thick] (0,-0.4) -- (1.2,-0.4) node[midway,below] {$\phi^{-1}$};
    \end{scope}

    \begin{scope}[xshift=22cm]
      \draw[grid] (0,0) grid (8,11);
      \draw[boundary] (0,0) rectangle (8,11);
      \draw[pathA]
      (0,0) -- (0,2) -- (1,2) -- (1,4) -- (3,4)
      -- (3,6) -- (4,6) -- (4,8) -- (6,8) -- (6,10)
      -- (7,10) -- (7,11) -- (8,11);

      \draw[highlight] (0.05,1) rectangle (2,2);

      \draw[pathB]
      (-1,0)--(0,0) -- (0,1) -- (2,1) -- (2,3) -- (3,3) -- (3,4) -- (5,4) -- (5,7) -- (6,7) -- (6,10) -- (7,10)
      -- (7,11) -- (8,11);

      \node at (4,-1){$(2,3)$};
    \end{scope}

    \begin{scope}[xshift=31cm, yshift=5cm]
      \draw[->, thick] (0,0) -- (1.2,0) node[midway,above] {$\phi$};
      \draw[<-, thick] (0,-0.4) -- (1.2,-0.4) node[midway,below] {$\phi^{-1}$};
    \end{scope}

    \begin{scope}[xshift=33cm]
      \draw[grid] (0,0) grid (8,11);
      \draw[boundary] (0,0) rectangle (8,11);
      \draw[pathA]
      (0,0) -- (0,2) -- (1,2) -- (1,4) -- (3,4)
      -- (3,6) -- (4,6) -- (4,8) -- (6,8) -- (6,10)
      -- (7,10) -- (7,11) -- (8,11);


      \draw[pathB]
      (-1,0)--(0,0) -- (0,2) -- (2,2) -- (2,3) -- (3,3) -- (3,4) -- (5,4) -- (5,7) -- (6,7) -- (6,10) -- (7,10)
      -- (7,11) -- (8,11);
      \node at (4,-1){$(1,4)$};
    \end{scope}
  \end{tikzpicture}
  \caption{An example of $\Phi$, where the pair $(\run, \return)$ is indicated under each picture.}
  \label{fig:Phi}
\end{figure}

\begin{corollary}\label{coro:hankel}
  Let $B$ be a boundary path, and suppose that the $k$-th $N$-step is the first $N$-step immediately followed by an $N$-step. Then, for $P \in \CP_B$ with $\run(P) > k$, we have
  \begin{equation}\label{eq:phi1}
    (\run, \return)(P) = (\run, \return)(\phi(P)) + (1, -1).
  \end{equation}
  Consequently, for all $a, b > 0$ with $b - a \ge k$, we have
  \begin{equation}\label{eq:phi2}
    |\CP_B(b,k)| = |\CP_B(b-a,k+a)|.
  \end{equation}
\end{corollary}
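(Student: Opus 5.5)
The plan is to reduce everything to \Cref{lem:phi2}. The whole point is to check that the hypothesis $r < \run$ (resp.\ $r < \return$) needed there holds automatically under the stated conditions. The key observation concerns the shape of $B$ near the bottom. Since the $k$-th $N$-step of $B$ is the first one followed by an $N$-step, each of the first $k-1$ $N$-steps of $B$ is followed by an $E$-step. Together with the virtual $E$-step, this means $\fh(B)$ starts with $1, 2, \dots, k$. Moreover, $B$ has no floor at height $k+1$, since its $k$-th $N$-step is immediately followed by another $N$-step.

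First I bound the run-return. Let $P \in \CP_B$ with $\run(P) > k$. Then the $(k+1)$-st floor of $P$ has height exactly $k+1$. A floor of $P$ sharing an $E$-step with $B$ must have the same height as the floor of $B$ containing that step. Since $k+1 \notin \fh(B)$, the $(k+1)$-st floor of $P$ does not contribute to $\return(P)$, hence not to $\rr(P)$. By \Cref{lem:phi}, the floors counted by $\rr(P)$ are initial and consecutive, so $\rr(P) \le k < \run(P)$. Setting $r = \rr(P)$, we have $r \le \return(P)$ trivially and $r < \run(P)$. So \Cref{lem:phi2} applies and gives $\phi(P) \in \CP_B^r(\run(P)-1, \return(P)+1)$, which is \eqref{eq:phi1}.

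For \eqref{eq:phi2}, I would show that $\phi^a$ is a bijection from $\CP_B(b,k)$ to $\CP_B(b-a,k+a)$, working stratum by stratum in $r$. Take $P \in \CP_B^r(b,k)$. Here $r \le \return(P) = k$, and $\rr$ is preserved by $\phi$. At the $i$-th application ($0 \le i < a$), the current path has run $b - i \ge b-a+1 > k \ge r$, so \Cref{lem:phi2} applies each time. Hence $\phi^a$ injects $\CP_B^r(b,k)$ into $\CP_B^r(b-a,k+a)$.

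For surjectivity, I use $\overline{\phi}$ iterated $a$ times on $P' \in \CP_B^r(b-a,k+a)$. This requires $r < \return$ and $r \le \run$ along the way, and the return only decreases from $k+a$ down to $k+1$. So it suffices to know $r \le k$.
\begin{itemize}
\item If $b-a > k$, the first paragraph gives $r \le k$.
\item If $b-a = k$, then $r \le \run(P') = k$.
\end{itemize}
Thus $\overline{\phi}^a$ is defined and inverse to $\phi^a$ on each stratum. Summing over $r$ gives the equality of cardinalities.

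The only delicate point is the bound $\rr \le k$, which rests on the height-matching argument for shared steps. The rest is bookkeeping on the applicability conditions of \Cref{lem:phi2}.
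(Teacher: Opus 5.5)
Your proof is correct and follows the paper's route. You bound $\rr(P)\le k$ via \Cref{lem:phi}, since the $(k+1)$-st floor, at height $k+1$, cannot be a return floor because $k+1\notin\fh(B)$; then you iterate \Cref{lem:phi2}, and your explicit check that every stratum of $\CP_B(b-a,k+a)$ also has $r\le k$ (including the edge case $b-a=k$) spells out what the paper compresses into ``which implies''.
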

\begin{proof}
  From Lemma~\ref{lem:phi}, for $P \in \CP_B$, only its first $k$ floors may contribute to both $\return(P)$ and $\run(P)$, thus $\rr(P) \le k$.
  Then Lemma~\ref{lem:phi2} yields \Cref{eq:phi1}, which implies \Cref{eq:phi2}.
\end{proof}

In a path, an $N$-step immediately followed by an $E$-step is called a \tdef{peak}. 

\begin{corollary}\label{coro:hankel2}
  Let $B$ be a boundary path with $n - 1$ peaks.
  Let $M = (m_{k, \ell})_{1 \leq k, \ell \leq n}$ with $m_{k, \ell} = |\CP_B(k, \ell)|$.
  If the first $N$-step of $B$ is immediately followed by an $N$-step, then $M$ is a Hankel matrix, \textit{i.e.}, all entries on the same skew diagonal are equal.
\end{corollary}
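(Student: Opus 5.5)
The plan is to deduce the statement directly from \Cref{lem:phi2}. The key step is to show that, under the hypothesis on $B$, every $P \in \CP_B$ has run-return $\rr(P) = 1$. This is really the case $k = 1$ of \Cref{coro:hankel}, but I would argue it directly.

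First, $\rr(P) \ge 1$ always. The first floor of $P$ contains the virtual $E$-step of height $1$. That step is shared with $B$, so the floor counts for $\return(P)$, and since its height is $1$ it also counts for $\run(P)$.

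Next, $\rr(P) \le 1$. By \Cref{lem:phi}, the floors counted by $\rr(P)$ are initial and consecutive, so it suffices to show that the second floor of $P$ is never counted. Suppose it counts for $\run(P)$, so its height is $2$. The first floor of $B$ has height $1$. Because the first $N$-step of $B$ is immediately followed by another $N$-step, the second floor of $B$ has height at least $3$, and every later floor of $B$ is higher still. Hence no floor of $B$ has height $2$, so a floor of $P$ at height $2$ cannot share an $E$-step with $B$ and is not counted in $\return(P)$. Therefore $\rr(P) = 1$ for all $P \in \CP_B$, that is, $\CP_B(k,\ell) = \CP_B^1(k,\ell)$.

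Now apply \Cref{lem:phi2} with $r = 1$. For every $k \ge 2$ and $\ell \ge 1$, the map $\phi$ is a bijection $\CP_B(k,\ell) \to \CP_B(k-1,\ell+1)$. This gives $m_{k,\ell} = m_{k-1,\ell+1}$ for all $2 \le k \le n$ and $1 \le \ell \le n-1$, which is exactly the statement that all entries on each skew diagonal of $M$ are equal.

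The only remaining point is that $M$ records every nonzero value of $(\run, \return)$, so the chain of equalities does not leak outside the indexed range. Since $B$ has $n-1$ peaks, it has $n$ floors including the virtual one, so $1 \le \return(P) \le n$. Moreover $\run(P) = \return(\Phi(P))$ by \Cref{thm:Phi}, so $1 \le \run(P) \le n$ as well. Thus the indices $1 \le k, \ell \le n$ cover all values. I do not expect any real obstacle; the one point needing care is verifying that no floor of $B$ sits at height $2$.
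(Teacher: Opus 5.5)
Your proof is correct and follows essentially the paper's route. The paper deduces the statement from \Cref{coro:hankel} with $k=1$, whose proof is exactly your argument ($\rr(P)\le 1$ via \Cref{lem:phi}, since $B$ has no floor at height $2$) followed by the bijections $\phi$ of \Cref{lem:phi2}; your final paragraph bounding $\run$ and $\return$ by $n$ is harmless but not needed, since the equalities $m_{k,\ell}=m_{k-1,\ell+1}$ within the index range already are the Hankel property.
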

\begin{proof}
  Since $B$ has $n - 1$ peaks, it has $n$ floors. Hence $\return(P) \leq n$, as each contributing floor of $P$ overlaps a distinct floor of $B$.
  Our claim then follows from Corollary~\ref{coro:hankel}.
\end{proof}

\subsection{The symmetry of $(\run,\breturn)$ on $\P_B$}\label{subsec3.2}

Given a path $P \in \P_B$, we define $\rbr(P)$ as the number of floors of $P$ counted in both $\run(P)$ and $\breturn(P)$. We have the following analogue of Lemma~\ref{lem:phi} for $\rbr$.

\begin{lemma}\label{lem:theta}
  For $P \in \P_B$ bounded by $B$, the floors counted by $\rbr(P)$ are initial and consecutive.
\end{lemma}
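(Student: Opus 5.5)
The argument will use only the recursive definition of block return floors; no boundary constraint is needed, which is why the statement holds on all of $\P_B$ rather than $\CP_B$. Let $k = \run(P)$. By definition of $\run$, $[k] \subseteq \fh(P)$. Since $\fh(P)$ is strictly increasing and starts with $1$ (thanks to the virtual $E$-step), the first $k$ floors of $P$ have heights exactly $1, 2, \dots, k$ in order. The floors counted by $\rbr(P)$ are exactly the block return floors among these first $k$ floors.

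It therefore suffices to show that this set is downward closed within $\{1, \dots, k\}$. That is, we show that if the $(i+1)$-st floor with $i+1 \le k$ is a block return floor, then so is the $i$-th floor. This is immediate from the second clause of the definition of block return floors. The $i$-th floor has height $i$ and the $(i+1)$-st floor has height $i+1$, exactly one more. So the $i$-th floor is a block return floor whenever the $(i+1)$-st one is.

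Iterating this downward from the highest counted index $j \le k$ shows that every floor with index in $[j]$ is counted. Hence the counted floors are precisely floors $1, \dots, j$, which are initial and consecutive.

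There is no real obstacle. The only point to check is that the run floors have consecutive heights $1, \dots, k$ in exactly this order, so that "height one less" coincides with "preceding floor". This follows from the strict monotonicity of $\fh(P)$ together with $[k] \subseteq \fh(P)$.
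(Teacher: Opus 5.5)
Your proof is correct and follows the same route as the paper, whose proof is the one-line observation that the counted floors are initial by the definition of $\run$ and consecutive by the recursive clause defining block return floors. You spell out the details the paper leaves implicit, notably that the first $\run(P)$ floors have heights exactly $1, \dots, \run(P)$ in order, so that ``height one less'' coincides with ``preceding floor''.
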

\begin{proof}
  Such floors are initial due to the definition of $\run$, and consecutive due to that of $\breturn$.
\end{proof}

For a boundary path $B$, we define
\[
  \widehat{\P}_B^r(k, \ell) := \{ P \in \P_B : \run(P) = k,\ \breturn(P) = \ell,\ \rbr(P) = r \}.
\]
We now define the analogue of \Cref{constr:lifting,constr:descending} for $(\run, \breturn)$, with the differences highlighted.

\begin{construction} \label{constr:lifting-theta}
  For a boundary $B$ and positive integers $r, \ell, k$ with $r \leq \ell$ and $r < k$, we define a map $\theta$ from \alert{$\widehat{\P}_B^r(k, \ell)$} to \alert{$\widehat{\P}_B^r(k - 1, \ell + 1)$} as follows.
  For $P \in \widehat{\P}_B^r(k, \ell)$, we do the following:
  \begin{enumerate}
  \item Set the run floor of $P$ as the current floor;
  \item Lift the current floor up until hitting $B$ at an $E$-step or up to one unit below the next floor;
  \item If the lifted floor \alert{becomes a block return floor}, terminate; otherwise, set the next floor as the current floor and go to step 2.
  \end{enumerate}
  We set $\theta(P)$ to be the path thus obtained after the procedure.
\end{construction}

\begin{construction} \label{constr:descending-theta}
  For a boundary $B$ and positive integers $r, \ell, k$ with $r < \ell$ and $r \leq k$, we define a map $\overline{\theta}$ from \alert{$\widehat{\P}_B^r(k, \ell)$} to \alert{$\widehat{\P}_B^r(k + 1, \ell - 1)$} as follows.
  For $P' \in \widehat{\P}_B^r(k, \ell)$, we do the following:
  \begin{enumerate}
  \item Set the current floor to be the lowest \alert{block return floor} above the run floor of $P'$;
  \item Lower the current floor down to one unit above the preceding floor;
  \item If the preceding floor is the run floor of $P'$, terminate; otherwise, set the preceding floor as the current floor and go to step 2.
  \end{enumerate}
  We set $\overline{\theta}(P')$ to be the path thus obtained after the procedure.
\end{construction}

\begin{lemma}\label{lem:theta2}
  For a boundary $B$ and positive integers $r, \ell, k$ such that $r \leq \ell$ and $r < k$, the map $\theta$ is a composition-type-preserving bijection from $\widehat{\P}_B^r(k, \ell)$ to $\widehat{\P}_B^r(k - 1, \ell + 1)$, with $\overline{\theta}$ its inverse.
\end{lemma}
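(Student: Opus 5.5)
We follow the proof of \Cref{lem:phi2}: check in turn that $\theta$ and $\overline{\theta}$ are well defined, that they land in the announced sets, and that they are mutually inverse. Composition-type is preserved automatically, since only floor heights change. No boundary constraint is needed here. In \Cref{lem:phi2} it was used only to show that the last lifted floor really moves, and the block-return condition will do that job.

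\textbf{Forward direction.} Let $P \in \widehat{\P}_B^r(k,\ell)$ with $r<k$. By \Cref{lem:theta}, the run floor $F_k$ (the $k$-th floor) is not a block return floor. By maximality of the run, $F_{k+1}$ (if it exists) has height at least $h_k+2$. So $F_k$ is lifted by at least one unit, which destroys the run at index $k$ while keeping $F_1,\dots,F_{k-1}$ at heights $1,\dots,k-1$. Hence $\run(\theta(P))=k-1$.

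\textbf{Termination.} The last floor of $P$ ends at the end of $B$; lifting it reaches $B$, so the procedure stops at some floor $F_j$ with $j\ge k$. I expect this to be the main obstacle: showing that $\breturn$ increases by exactly one. The points to check are as follows.
\begin{itemize}
\item Floors $F_k,\dots,F_{j-1}$ are not block return floors after lifting. This holds by the stopping rule, since each was lifted to exactly one below the next floor without becoming a block return.
\item Lifting $F_i$ to height $h_{i+1}-1$ can only create block-return status through $F_{i+1}$. Because the procedure did not stop, $F_{i+1}$ was not a block return floor in the relevant configuration.
\item The status of the floors after $F_j$ is unchanged.
\item $F_j$ was not a block return floor before lifting. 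This is the delicate point: its status could in principle depend on $F_{j+1}$, whose height is unchanged. I would argue that if $F_j$ were already block return with height $h_{j+1}-1$ or touching $B$, then the preceding lifted floor $F_{j-1}$ would have become block return and the procedure would have stopped earlier.
\end{itemize}
Given these, exactly one new block return floor appears, namely $F_j$. The floor $F_{k-1}$, at height $k-1$, now sits at least two units below $F_k$, so the block containing the first $r$ floors is unaffected. This gives $\breturn$ increased by one and $\rbr=r$.

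\textbf{Reverse direction.} For $\overline{\theta}$, let $P'\in \widehat{\P}_B^r(k,\ell)$ with $r<\ell$.
\begin{itemize}
\item \Cref{lem:theta} gives a block return floor above the run floor. Let $F$ be the lowest one.
\item The floor $F'$ just before $F$ is not block return, or it is the run floor but not counted in $\rbr$. Either way, $F'$ has height at most $h_F-2$ when $F'$ is not the run floor, since otherwise $F'$ would be block return by the recursive definition. When $F'$ is the run floor, the same conclusion holds from $r<\ell$ combined with the minimality of $F$.
\item So $F$ genuinely drops, losing its block-return status: it was the bottom of its block, so no other floor's status depends on it.
\item The lowered chain then ends one unit above the run floor, giving $\run$ increased by one.
\end{itemize}

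\textbf{Inverses.} $\overline{\theta}\circ\theta=\mathrm{id}$ holds because the floor where $\theta$ stops is the lowest block return floor above the new run floor, and lowering each floor to one above its predecessor restores the original heights: lifting had raised each non-final floor to exactly one below its successor. For $\theta\circ\overline{\theta}=\mathrm{id}$, we show that $\theta$ applied to $\overline{\theta}(P')$ stops exactly at $F$. An earlier stop would produce a block return floor below $F$ and above the run floor in $P'$, contradicting minimality. Lifting restores the heights, since each intermediate floor in $P'$ was one unit below its successor, exactly as in the final step of \Cref{lem:phi2}.
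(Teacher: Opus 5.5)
Your proposal is correct and is essentially the paper's argument: rerun the proof of \Cref{lem:phi2}, with \Cref{lem:theta} in place of \Cref{lem:phi} and the block-return stopping rule doing the job of the boundary-constraint step, and you write it out in more detail than the paper, which only sketches it. Two justifications need small repairs: when the floor before $F$ is the run floor, the gap $\geq 2$ comes from the maximality of $\run$ (as you used in the forward direction), not from $r<\ell$ and the minimality of $F$ (and that run floor may well be counted in $\rbr$); and $F_k,\dots,F_{j-1}$ end up non-block-return not merely by the stopping rule, but because $F_j$ strictly rises and so leaves $F_{j-1}$ at least two units below it.
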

\begin{proof}
  We use similar arguments as in the proof of \Cref{lem:phi2}, except that we do not need to handle the boundary constraint.
  To show that $\theta$ in \Cref{constr:lifting-theta} is well-defined, we use \Cref{lem:theta} instead of \Cref{lem:phi} to show that $\run(\theta(P)) = \run(P) - 1$, and the termination condition of \Cref{constr:lifting-theta} ensures that $\breturn(\theta(P)) = \breturn(P) + 1$.
  To show that $\overline{\theta}$ in \Cref{constr:descending-theta} is well-defined, \Cref{lem:theta} is used instead of \Cref{lem:phi} to ensure that the first step is feasible, and \Cref{constr:descending-theta} along with the definitions of $\run$ and $\breturn$ ensures that the statistics are modified accordingly.
  The bijectivity of $\theta$ holds exactly for the same reason as for $\phi$ in the proof of \Cref{lem:phi2}.
\end{proof}

\begin{figure}
  \centering
  \begin{tikzpicture}[
    scale=0.38,
    >=stealth,
    grid/.style={gray!60, thin},
    boundary/.style={},
    pathA/.style={blue!60, very thick},
    pathB/.style={very thick},
    highlight/.style={red, dotted, very thick}
    ]

    \begin{scope}
      \draw[grid] (0,0) grid (7,8);
      \draw[boundary] (0,0) rectangle (7,8);
      \draw[pathA]
      (0,0) -- (0,2) -- (1,2) -- (1,5) -- (2,5)
      -- (2,7) -- (5,7) -- (5,8) -- (7,8) ;

      \draw[highlight] (3,3) rectangle (5,5);
      \draw[highlight] (6,7) rectangle (7,8);

      \draw[pathB]
      (-1,0)--(0,0) -- (0,1) -- (2,1) -- (2,2) -- (3,2) -- (3,3) -- (5,3) -- (5,6) -- (6,6) -- (6,7) -- (7,7)-- (7,8);

      \node at (3.5,-1){$(4,1)$};
    \end{scope}

    \begin{scope}[xshift=8cm, yshift=5cm]
      \draw[->, thick] (0,0) -- (1.2,0) node[midway,above] {$\theta$};
      \draw[<-, thick] (0,-0.4) -- (1.2,-0.4) node[midway,below] {$\theta^{-1}$};
    \end{scope}

    \begin{scope}[xshift=10cm]
      \draw[grid] (0,0) grid (7,8);
      \draw[boundary] (0,0) rectangle (7,8);
      \draw[pathA]
      (0,0) -- (0,2) -- (1,2) -- (1,5) -- (2,5)
      -- (2,7) -- (5,7) -- (5,8) -- (7,8) ;

      \draw[highlight] (2,2) rectangle (3,4);
      \draw[highlight] (5,6) rectangle (6,7);

      \draw[pathB]
      (-1,0)--(0,0) -- (0,1) -- (2,1) -- (2,2) -- (3,2) -- (3,5) -- (5,5) -- (5,6) -- (6,6) -- (6,8) -- (7,8);

      \node at (3.5,-1){$(3,2)$};
    \end{scope}

    \begin{scope}[xshift=18cm, yshift=5cm]
      \draw[->, thick] (0,0) -- (1.2,0) node[midway,above] {$\theta$};
      \draw[<-, thick] (0,-0.4) -- (1.2,-0.4) node[midway,below] {$\theta^{-1}$};
    \end{scope}

    \begin{scope}[xshift=20cm]
      \draw[grid] (0,0) grid (7,8);
      \draw[boundary] (0,0) rectangle (7,8);
      \draw[pathA]
      (0,0) -- (0,2) -- (1,2) -- (1,5) -- (2,5)
      -- (2,7) -- (5,7) -- (5,8) -- (7,8) ;

      \draw[highlight] (0,1) rectangle (2,2);

      \draw[pathB]
      (-1,0)--(0,0) -- (0,1) -- (2,1) -- (2,4) -- (3,4) -- (3,5) -- (5,5) -- (5,7) -- (6,7) -- (6,8) -- (7,8);

      \node at (3.5,-1){$(2,3)$};
    \end{scope}

    \begin{scope}[xshift=28cm, yshift=5cm]
      \draw[->, thick] (0,0) -- (1.2,0) node[midway,above] {$\theta$};
      \draw[<-, thick] (0,-0.4) -- (1.2,-0.4) node[midway,below] {$\theta^{-1}$};
    \end{scope}

    \begin{scope}[xshift=30cm]
      \draw[grid] (0,0) grid (7,8);
      \draw[boundary] (0,0) rectangle (7,8);
      \draw[pathA]
      (0,0) -- (0,2) -- (1,2) -- (1,5) -- (2,5)
      -- (2,7) -- (5,7) -- (5,8) -- (7,8) ;


      \draw[pathB]
      (-1,0)--(0,0) -- (0,2) -- (2,2) -- (2,4) -- (3,4) -- (3,5) -- (5,5) -- (5,7) -- (6,7) -- (6,8) -- (7,8);

      \node at (3.5,-1){$(1,4)$};
    \end{scope}
  \end{tikzpicture}
  \caption{An example of $\Theta$ with $(\run, \breturn)$ indicated under each path.}
  \label{fig:Theta}
\end{figure}

\begin{theorem}\label{thm:Theta}
  For every boundary path $B$, we have the symmetry
  \begin{equation}
    \sum_{P \in \P_B} q^{\run(P)} t^{\breturn(P)} = \sum_{P \in \P_B} q^{\breturn(P)} t^{\run(P)}.
  \end{equation}
\end{theorem}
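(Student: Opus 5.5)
The proof follows the same pattern as that of \Cref{thm:Phi}, with \Cref{lem:theta2} in place of \Cref{lem:phi2}. First, $\P_B$ is partitioned into the sets $\widehat{\P}_B^r(k,\ell)$, where $k = \run(P)$, $\ell = \breturn(P)$ and $r = \rbr(P)$. By definition $r \le k$ and $r \le \ell$. Moreover $k, \ell \ge 1$: the first floor, of height $1$ (the virtual $E$-step), always counts in $\run$. It is also always a block return floor, since the virtual $E$-step is shared with $B$. Hence $r \ge 1$ as well.

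Next I define $\Theta$ on $\widehat{\P}_B^r(k,\ell)$ as follows.
\begin{itemize}
\item If $k > \ell$, set $\Theta(P) := \theta^{k-\ell}(P)$.
\item If $k < \ell$, set $\Theta(P) := \overline{\theta}^{\,\ell-k}(P)$.
\item If $k = \ell$, let $\Theta$ be the identity.
\end{itemize}
For this to make sense, each intermediate path must satisfy the hypotheses of \Cref{lem:theta2}. By that lemma, applying $\theta$ keeps $r$ fixed and moves $(k,\ell)$ to $(k-1,\ell+1)$. Starting from $(k,\ell)$ with $k>\ell\ge r$, the intermediate pairs are $(k-i,\ell+i)$ for $0\le i<k-\ell$. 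Each satisfies $k-i > \ell+i \ge r$, so $r < k-i$ and $r \le \ell+i$, as \Cref{constr:lifting-theta} requires. The case $k<\ell$ is symmetric, using $\overline{\theta}$ and the conditions $r<\ell$, $r\le k$ of \Cref{constr:descending-theta}.

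Then $\Theta$ maps $\widehat{\P}_B^r(k,\ell)$ to $\widehat{\P}_B^r(\ell,k)$. Since $\overline{\theta}$ is the inverse of $\theta$ by \Cref{lem:theta2}, $\Theta$ is an involution on $\P_B$ that swaps $\run$ and $\breturn$. The claimed symmetry follows immediately.

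The only point needing care is checking that the hypotheses of \Cref{lem:theta2} hold at every step. That is exactly the chain of inequalities above, so I expect no real obstacle: the substantive work was done in \Cref{lem:theta,lem:theta2}.
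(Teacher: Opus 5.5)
Your proposal is correct and follows the paper's own argument: partition $\P_B$ by $(\run,\breturn,\rbr)$ and set $\Theta:=\theta^{k-\ell}$, with negative powers meaning iterates of $\overline{\theta}$, so that \Cref{lem:theta2} makes $\Theta$ an involution exchanging $\run$ and $\breturn$. One small slip: the chain $k-i>\ell+i\ge r$ fails once $2i\ge k-\ell$, but the hypotheses you actually need, $r<k-i$ and $r\le\ell+i$, follow from $k-i>\ell\ge r$ for $0\le i<k-\ell$ (and symmetrically for $\overline{\theta}$).
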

\begin{proof}
  For any $\alpha \in \widehat{\P}_B^r(k, \ell)$, we set $\Theta(\alpha) := \theta^{k-\ell}(\alpha)$; see \Cref{fig:Theta} for an example.
  It follows from \Cref{lem:theta2} that $\Theta$ is an involution exchanging $\run$ and $\breturn$, which implies our claim.
\end{proof}

\subsection{The symmetry of $(\brun,\return)$ on $\CP_B$}\label{subsec3.3}

The results of this section can be seen as a special case of those in \Cref{subsec3.4}, but we still lay out full proofs as they prepare the reader for the far more technical \Cref{subsec3.4}.

Unlike the classical $\run$, the boundary run $\brun$ takes into account floors of the boundary. 
For a path $P \in \P_B$ bounded by $B$, the \tdef{boundary run floor} of $B$ relative to $P$ is the $\brun(P)$-th floor of $B$.
By the definition of $\brun$, $P$ has a floor at the same height as the boundary run floor, which is called the \tdef{bottom run floor} of $P$. 
All floors of $B$ up to and including the boundary run floor are counted by $\brun(P)$.

We may consider $\return(P)$ as counting floors of $B$ that share steps with $P$.
Similar to $\rr$ and $\rbr$, the \tdef{boundary run-return} of $P$, denoted by $\brr(P)$, counts the number of floors of $B$ counted in both $\brun(P)$ and $\return(P)$.
As an example, for the path $P$ bounded by $B$ on the left of \Cref{fig:psi1}, the third floor of $B$ is the boundary run floor relative to $P$, the third floor of $P$ is its bottom run floor, and $\brr(P) = 2$.

\begin{lemma}\label{lem:psi}
  For $P \in \CP_B$, the floors of $B$ counted by $\brr(P)$ are initial and consecutive.
  Furthermore, if $B$ has $k$ floors, then for every $r \le s \in [k - 1]$ there exists $P' \in \CP_B$ with $(\brun, \return, \brr)(P') = (s, r, r)$. 
\end{lemma}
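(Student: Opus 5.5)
Both parts will be handled directly, mirroring the proof of \Cref{lem:phi} but indexing by floors of $B$. Let $\fh(B)=(b_1,\dots,b_k)$.

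For the first claim, only floors $G_1,\dots,G_{\brun(P)}$ of $B$ can be counted by $\brr(P)$. I will show that if $G_j$ with $j<\brun(P)$ is not counted by $\return(P)$, then neither is $G_{j+1}$; induction then gives initiality and consecutiveness. Since $j+1\le \brun(P)$, the path $P$ has floors $F_j,F_{j+1}$ at heights $b_j,b_{j+1}$. The height sequence of $P$ is increasing and $b_j<b_{j+1}$, so $F_j$ precedes $F_{j+1}$. The floor of $P$ at height $b_j$ lies weakly below $B$. It can only lie under $B$'s floors $G_1,\dots,G_j$, hence its right end is at or before the right end of $G_j$.

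If $F_{j+1}$ shares an $E$-step with $G_{j+1}$, then $F_{j+1}$ lies partly under $G_{j+1}$. I then look at the floor of $P$ just before $F_{j+1}$. It has height at most $b_{j+1}-1<b_{j+1}$ and ends at the left end $x$ of $F_{j+1}$, which lies in the projection of $G_{j+1}$ or at its left end. If that floor ends strictly inside $G_{j+1}$'s projection and starts inside it too, the boundary constraint is violated. Otherwise it overlaps $G_j$ from the right. The plan is to argue that the floor of $P$ at height $b_j$ (namely $F_j$) is then exactly that previous floor, or that it reaches under $G_j$'s last step, so that it shares a step with $G_j$; this would contradict the assumption on $G_j$.

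The cleanest route is probably to use \Cref{prop:bounded-index}. Let $i$ be the index of $F_{j+1}$, so its first-cover floor is $G_{j+1}$. Then floor $i-1$ of $P$ has first-cover floor of index at most $j$. It has height at least $b_j$ (heights increase and $F_j$ precedes), and being under $B$ with first-cover index at most $j$, it has height at most $b_j$. So it is $F_j$ and sits at the height of its first-cover floor, and thus shares a step with $B$ on $G_j$. This is the key step, and the main obstacle is checking the "height at most $b_j$" part: a floor whose first cover is $G_{j'}$ with $j'\le j$ lies under $G_{j'}$, so its height is at most $b_{j'}\le b_j$.

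For the existence claim, given $r\le s\le k-1$ I will construct $P'$ explicitly with $\comp(P')=\comp(B)$, so that the boundary constraint holds automatically. Its floor heights will be $b_1,\dots,b_r$ (coinciding with $B$'s first $r$ floors, giving $r$ returns), then $b_{r+1}-1,\dots$ on subsequent floors. Floor $i$ of $P'$ will be placed at height $b_{i+1}$ for suitable $i$, so that the heights $b_{r+1},\dots,b_s$ appear as floor heights one index late and are not returns. Concretely, the floors $r+1,\dots,s+1$ of $P'$ get heights $b_{r+1}-1,b_{r+1},\dots,b_s$. Because $b_{i}-1\ge b_{i-1}$, these are valid, increasing, and lie strictly below $G_{r+1},\dots,G_{s+1}$. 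The remaining floors $i>s+1$ get height $b_{i-1}$, so $b_{s+1}$ never appears and $\brun=s$.

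The case $r=0$ needs a tweak, since the virtual step forces a return. The statement has $r\ge1$ implicitly, because $\brr\ge1$ always holds through the virtual floor at height $1=b_1$. The remaining check is strict monotonicity when $b_{r+1}-1=b_r$. In that case I drop the extra floor by merging it into floor $r$, and verify that the composition stays in $\cons(\comp(B))$, since it is a coarsening consistent with $B$'s partial sums.
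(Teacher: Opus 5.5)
\textbf{Gap in the second claim.} Your construction does not give $\brun(P')=s$ when $s\le k-2$. You put floors $r+2,\dots,s+1$ at heights $b_{r+1},\dots,b_s$, and every floor $i>s+1$ at height $b_{i-1}$. So floor $s+2$ gets height $b_{s+1}$, all of $b_1,\dots,b_{k-1}$ occur, and $\brun(P')=k-1$. The sentence ``so $b_{s+1}$ never appears'' is false, and the construction only works for $s=k-1$.

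Reading $b_{i-1}$ as $b_i-1$ does not rescue it: that fails as soon as $b_{s+2}=b_{s+1}+1$. For the boundary $(EN)^n$, where $b_i=i$, no separate floor under $G_{s+2}$ can work in your scheme. Its height would have to exceed $s$ (the height of the preceding floor), be at most $s+2$, avoid $s+1$ (to keep $\brun=s$), and avoid $s+2$ (to avoid a new return).

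What you need is that $P'$ never reaches height $b_{s+1}$. Extend floor $s+1$ of your path all the way to $x=m$, merging floors $s+1,\dots,k$. This floor has height $b_s$ if $s>r$, and height $b_{r+1}-1$ (or merged into floor $r$) if $s=r$. Then finish with $N$-steps. With this change:
\begin{itemize}
\item every floor beyond the first $r$ lies under floors of $B$ of strictly larger height, so the returns are exactly $G_1,\dots,G_r$;
\item $b_1,\dots,b_s$ occur as floor heights but $b_{s+1}$ does not, so $\brun(P')=s$;
\item $\comp(P')$ is obtained from $\comp(B)$ by merging consecutive parts, so it lies in $\cons(\comp(B))$, because its partial sums form a subset of those of $\comp(B)$.
\end{itemize}
You need this coarsening argument anyway, since your merge at floor $r$ already breaks $\comp(P')=\comp(B)$. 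The repaired construction is essentially the paper's. The paper extends the $r$-th floor of $B$ by $E^p$ and places the floors of heights $b_{r+1},\dots,b_s$ under the last $s-r$ floors of $B$, which are strictly higher because $s<k$.

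\textbf{First claim.} Your route through \Cref{prop:bounded-index} is correct. It is the paper's propagation of non-returns from $G_j$ to $G_{j+1}$ via the boundary constraint, with the details filled in. Suppose $F_{j+1}$, of index $i$, shares a step with $G_{j+1}$. Then floor $i-1$ has first-cover index at most $j$, hence height at most $b_j$. It also has height at least $b_j$, since $F_j$ precedes $F_{j+1}$. So it is $F_j$ and shares a step with $G_j$.

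Remove the exploratory paragraph before it, whose key claim is backwards. A floor of $P$ at height $b_j$ can only lie under $G_j,G_{j+1},\dots$, the floors of height at least $b_j$. So it is its \emph{left} end that is at or after the left end of $G_j$, and its right end may lie far beyond $G_j$, exactly as in the construction above. Also, ``floor $i$ at height $b_{i+1}$'' should read $b_{i-1}$; the former would lie above $G_i$.
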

\begin{proof}
  For the first claim, let $0 < i < \brun(P)$ and $G_i$ the $i$-th floor of $B$.
  By the definition of $\brun$, $G_i$ cannot be counted only in $\return(P)$ but not in $\brun(P)$.
  Suppose that $G_i$ is not counted in $\return(P)$, which means the projection of the $i$-th floor of $B$ on the $x$-axis does not intersect that of the floor of $P$ with height $\fh_i(B)$ for more than a point.
  As $P \in \CP_B$, by \Cref{eq:boundary-constrain}, the projection of the $(i + 1)$-st floor of $B$ on the $x$-axis does not overlap that of the floor in $P$ at height $\fh_{i+1}(B)$ for more than a point.
  Hence, $G_i$ is not counted in $\return(P)$ implies the same for $G_{i + 1}$, and by induction we have the first claim.

  For the second claim, we construct $P'$ as follows.
  We take the initial segment of $B$ up to just after its $r$-th floor.
  For each $0 < i \le s - r$, we lower the $(k - (s - r) + i)$-th floor of $B$ to the height of the $(r + i)$-th floor of $B$ to get a new floor.
  Since these floors are consecutive, we link them into a segment using $N$-steps.
  When $s = r$, we set this new segment to be at the endpoint of $B$.
  Connect the initial segment and the new one with a subpath of the form $E^p N^q$, add a subpath of the form $N^\ell$ to reach the endpoint of $B$, and we get $P'$.
  We can check that $P'$ is bounded by $B$, and it satisfies $(\brun, \return, \brr)(P') = (s, r, r)$.
  See \Cref{fig:lem-psi} for an example of constructing a bounded path $P'$ with $(\brun, \return, \brr)(P') = (6, 3, 3)$.
\end{proof}

\begin{figure}
  \centering
  \begin{tikzpicture}[
    scale=0.45,
    >=stealth,
    grid/.style={gray!60, thin},
    boundary/.style={},
    pathA/.style={blue!60, very thick},
    pathB/.style={very thick},
    highlight/.style={red, dotted, very thick}
    ]

    \begin{scope}
      \draw[grid] (0,0) grid (16,16);
      \draw[boundary] (0,0) rectangle (16,16);
      \draw[pathA]
      (0,0) -- (0,2) -- (1,2) -- (1,3) -- (3,3)
      -- (3,5) -- (5,5) -- (5 ,6) -- (7, 6) -- (7, 9)
      -- (8, 9) -- (8,10) -- (9,10) -- (9,12) -- (11,12) -- (11,14) -- (13,14) -- (13, 16) -- (16, 16);

      \draw[pathB]
      (-1,0)--(0,0) -- (0,2) -- (1,2) -- (1,3) -- (3,3)
      -- (3,5) -- (9,5) -- (9,6) -- (11,6) -- (11,9) -- (13,9)
      -- (13,10) -- (16,10) -- (16,16);
      \draw[red,very thick] (13, 16) -- (16, 16);
      \draw[red,very thick] (11,14) -- (13,14);
      \draw[red,very thick] (9,12) -- (11,12);
      \draw[red,very thick] (13, 10) -- (16, 10);
      \draw[red,very thick] (11,9) -- (13,9);
      \draw[red,very thick] (9,6) -- (11,6);
    \end{scope}

  \end{tikzpicture}
  \caption{An example of constructing a bounded path whose $(\brun, \return, \brr)$ equals $(6, 3, 3)$.}
  \label{fig:lem-psi}
\end{figure}

For any boundary path $B$, define
\begin{align*}
  &\overline{\CP}_B(k, \ell) := \{ P \in \CP_B : \brun(P) = k,\ \return(P) = \ell \},\\
  &\overline{\CP}_B^r(k, \ell) := \{ P \in \CP_B : \brun(P) = k,\ \return(P) = \ell,\ \brr(P) = r \}.
\end{align*}

\begin{figure}
  \centering
  \begin{tikzpicture}[
    scale=0.45,
    >=stealth,
    grid/.style={gray!60, thin},
    boundary/.style={},
    pathA/.style={blue!60, very thick},
    pathB/.style={very thick},
    highlight/.style={red, dotted, very thick}
    ]

    \begin{scope}
      \draw[grid] (0,0) grid (10,15);
      \draw[boundary] (0,0) rectangle (10,15);
      \draw[pathA]
      (0,0) -- (0,2) -- (1,2) -- (1,3) -- (2,3)
      -- (2,6) -- (4,6) -- (4,8) -- (5,8) -- (5,9)
      -- (6,9) -- (6,11) -- (8,11) -- (8,13) -- (9,13) -- (9,15) -- (10,15);

      \draw[highlight] (2,3) -- (2,5) -- (4,5) -- (4,6)--(5,6);

      \draw[pathB]
      (-1,0)--(0,0) -- (0,2) -- (2,2) -- (2,3) -- (4,3) -- (4,4) -- (5,4) -- (5,8) -- (6,8) -- (6,9) -- (8,9)
      -- (8,11) -- (9,11) -- (9,12) -- (10,12) -- (10,15);
      \draw[red,very thick] (2,3)--(4,3)--(4,4)--(5,4);
    \end{scope}

    \begin{scope}[xshift=12.5cm, yshift=8cm]
      \draw[->, thick] (0,0) -- (1.2,0) node[midway,above] {\footnotesize Block lifting};
      \draw[<-, thick] (0,-0.4) -- (1.2,-0.4) node[midway,below] {};
    \end{scope}

    \begin{scope}[xshift=16cm]
      \draw[grid] (0,0) grid (10,15);
      \draw[boundary] (0,0) rectangle (10,15);
      \draw[pathA]
      (0,0) -- (0,2) -- (1,2) -- (1,3) -- (2,3)
      -- (2,6) -- (4,6) -- (4,8) -- (5,8) -- (5,9)
      -- (6,9) -- (6,11) -- (8,11) -- (8,13) -- (9,13) -- (9,15) -- (10,15);

      \draw[highlight] (2,3) -- (4,3) -- (4,4) -- (5,4)--(5,6);

      \draw[pathB]
      (-1,0)--(0,0) -- (0,1.95) -- (2,2) -- (2,3) -- (2,5) -- (4,5) -- (4,6)--(5,6) -- (5,8) -- (6,8) -- (6,9) -- (8,9)
      -- (8,11) -- (9,11) -- (9,12) -- (10,12) -- (10,15);
      \draw[red,very thick] (2,5)--(4,5)--(4,6)--(5,6);
    \end{scope}

  \end{tikzpicture}
  \caption{An example of the block lifting of the bijection $\psi$.}
  \label{fig:psi1}
\end{figure}

\begin{construction} \label{constr:lifting-psi}
  For a boundary $B$ and positive integers $r, \ell, k$ with $r \leq \ell$ and $r < k$, we define a map $\psi$ from $\overline{\CP}_B^r(k, \ell)$ to $\overline{\CP}_B^r(k - 1, \ell + 1)$ as follows.
  For $P \in \overline{\CP}_B^r(k, \ell)$, its \tdef{top run floor} is the highest floor below height $\fh_{\brun(P)+1}(B)$, whose index in $P$ is denoted by $\truf(P)$.
  We then do the following:
  \begin{enumerate}
  \item \textbf{Block lifting}:
    Lift the floors of height between $\fh_{\brun(P)}(B)$ and $\fh_{\brun(P) + 1}(B)$ by $\fh_{\brun(P) + 1}(B) - \fh_{\truf(P)}(P)$, after which the $\truf(P)$-th floor is lifted to the height $\fh_{\brun(P) + 1}(B)$.
    If the lifted top run floor hits the boundary on an $E$-step, terminate; otherwise, go to Ordinary lifting.
  \item \textbf{Ordinary lifting}:   
    \begin{enumerate}
    \item Set the lifted top run floor as the current floor;
    \item Lift the current floor up until hitting $B$ at an $E$-step or up to one unit below the next floor;
    \item If the lifted floor hits $B$ at an $E$-step, terminate; otherwise, set the next floor as the current floor and go to the previous step.
    \end{enumerate}
  \end{enumerate}
  We set $\psi(P)$ to be the path thus obtained after the procedure. See \Cref{fig:psi1,fig:psi2} for an example of the application of $\psi$.
\end{construction}

\begin{figure}
  \centering
  \begin{tikzpicture}[
    scale=0.45,
    >=stealth,
    grid/.style={gray!60, thin},
    boundary/.style={},
    pathA/.style={blue!60, very thick},
    pathB/.style={very thick},
    highlight/.style={red, dotted, very thick}
    ]

    \begin{scope}
      \draw[grid] (0,0) grid (10,15);
      \draw[boundary] (0,0) rectangle (10,15);
      \draw[pathA]
      (0,0) -- (0,2) -- (1,2) -- (1,3) -- (2,3)
      -- (2,6) -- (4,6) -- (4,8) -- (5,8) -- (5,9)
      -- (6,9) -- (6,11) -- (8,11) -- (8,13) -- (9,13) -- (9,15) -- (10,15);

      \draw[highlight] (4,6) -- (4,7) -- (5,7);
      \draw[highlight] (6,9) -- (6,10) -- (8,10);
      \draw[highlight] (9,12) -- (9,15) -- (10,15);

      \draw[pathB]
      (-1,0)--(0,0) -- (0,2) -- (2,2) -- (2,3) -- (2,5) -- (4,5) -- (4,6)--(5,6) -- (5,8) -- (6,8) -- (6,9) -- (8,9)
      -- (8,11) -- (9,11) -- (9,12) -- (10,12) -- (10,15);
    \end{scope}

    \begin{scope}[xshift=12.5cm, yshift=8cm]
      \draw[->, thick] (0,0) -- (1.2,0) node[midway,above] {\footnotesize Ordinary lifting};
      \draw[<-, thick] (0,-0.4) -- (1.2,-0.4) node[midway,below] {};
    \end{scope}

    \begin{scope}[xshift=16cm]
      \draw[grid] (0,0) grid (10,15);
      \draw[boundary] (0,0) rectangle (10,15);
      \draw[pathA]
      (0,0) -- (0,2) -- (1,2) -- (1,3) -- (2,3)
      -- (2,6) -- (4,6) -- (4,8) -- (5,8) -- (5,9)
      -- (6,9) -- (6,11) -- (8,11) -- (8,13) -- (9,13) -- (9,15) -- (10,15);

      \draw[highlight] (5,7) -- (5,6) -- (4,6);
      \draw[highlight] (6,9) -- (8,9) -- (8,10);
      \draw[highlight] (9,12) -- (10,12) -- (10,15);
      \draw[pathB]
      (-1,0)--(0,0) -- (0,2) -- (2,2) -- (2,3) -- (2,5) -- (4,5) -- (4,7)--(5,7) -- (5,8) -- (6,8) -- (6,10) -- (8,10)
      -- (8,11) -- (9,11) -- (9,15) -- (10,15);
    \end{scope}

  \end{tikzpicture}
  \caption{An example of the ordinary lifting of the bijection $\psi$.}
  \label{fig:psi2}
\end{figure}

\begin{lemma} \label{lem:psi-defined}
  For a boundary $B$ and positive integers $r, k, \ell$ such that $r \leq \ell$ and $r < k$, the map $\psi$ is always well-defined on $\overline{\CP}_B^r(k, \ell)$, and sends it to $\overline{\CP}_B^r(k - 1, \ell + 1)$, while preserving composition-type.
\end{lemma}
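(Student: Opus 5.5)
I will follow the template of the proof of \Cref{lem:phi2}, splitting the argument along the two phases of \Cref{constr:lifting-psi}. Throughout, write $k = \brun(P)$ and $H = \fh_{k+1}(B)$. Since only floor heights change, $\comp$ is preserved, so $\psi(P)$ stays in $\CP_B$ as soon as it is bounded by $B$.

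\textbf{Block lifting.} First I will show that this phase is feasible. Because $r<k$, \Cref{lem:psi} says the $k$-th floor $G_k$ of $B$ (the boundary run floor) is not counted in $\return(P)$. By the definition of $\brun$, $P$ has no floor at height $H$. Hence the top run floor lies strictly below $H$, and the lift amount $d = H - \fh_{\truf(P)}(P)$ is positive.

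Next I will show that the lifted block stays below $B$. The block consists of the floors of $P$ from the bottom run floor up to the top run floor, and its heights lie in $[\fh_k(B), H)$. After lifting by $d$, these heights lie in $[\fh_k(B)+d, H]$. Since $G_k$ does not overlap the bottom run floor, boundary constraint (through \Cref{prop:bounded-index}) should place every floor of the block to the right of the $x$-range of $G_k$, hence under floors of $B$ of index $\ge k+1$ and height $\ge H$. This is the step I expect to be the main obstacle: one must carefully show that no floor of the block sits under $G_k$ (or under an earlier floor of $B$), and also that the block does not collide with the next floor of $P$. The next floor has height $>H$ by the choice of the top run floor, and the top floor of the block moves exactly to $H$, so no collision occurs.

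Then I will check the statistics after block lifting. The bottom run floor has moved off height $\fh_k(B)$, and lifting cannot create a floor at that height, so $\brun$ drops to exactly $k-1$. The first $k-1$ floors of $B$ are untouched, so $\brr$ stays $r$ (and $r\le k-1$). If the lifted top run floor now touches $B$, it touches $G_{k+1}$, which gives one new return. I still need to confirm that no floor of the block loses a return it previously had. By the first part of \Cref{lem:psi}, no floor of $B$ at index $\ge k$ was matched by a block floor, since $G_k$ is not a return, so nothing is lost.

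\textbf{Ordinary lifting.} This phase is the $\phi$ procedure started from the lifted top run floor, which sits at height $H$ and does not touch $B$. I will copy the argument of \Cref{lem:phi2}. The procedure terminates. All lifts keep the path below $B$ and below the next floor. The crucial point is that the final floor $F$ actually changed height when it hit $B$, so the return it creates is new. I plan to reuse the same contradiction argument, with $F'$ the preceding floor and $G'$ the floor of $B$ preceding $G$, using boundary constraint. Lifted floors only rise and earlier returns are untouched, so $\return$ increases by exactly $1$.

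Finally, the ordinary lifting phase does not affect $\brun$. Every floor it moves is at height $\ge H$ and is moved to a non-boundary height, except the last one, which lands on a floor of $B$ of index $>k+1$. So no floor appears at $\fh_k(B)$, while the floors at $\fh_1(B),\dots,\fh_{k-1}(B)$ are untouched. Together these give $\psi(P)\in\overline{\CP}_B^r(k-1,\ell+1)$.
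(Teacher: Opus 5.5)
Your proposal is correct and takes essentially the paper's route. Block lifting vacates height $\fh_k(B)$, so $\brun$ drops by one. Exactly one new return arises, either at early termination or via the argument of \Cref{lem:phi2}; for early termination, your direct observation suffices: the lifted top run floor can only meet $G_{k+1}$, which was previously unmatched since $P$ has no floor at height $H$. Finally, $\brr$ is untouched.

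The step you flag as the main obstacle follows from boundedness alone, with no boundary constraint needed. The bottom run floor has height $\fh_k(B)$, so it cannot lie to the left of $G_k$. Since it shares no step with $G_k$, it and all later floors of $P$ lie to the right of $G_k$, under floors of $B$ of height at least $H$. The same observation shows that $G_{k+1}$ exists, which you used without comment; the paper derives this from \Cref{lem:psi}.
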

\begin{proof}
  We first show that $\psi$ is well-defined.
  Let $P \in \overline{\CP}_B^r(k,\ell)$.
  The boundary run floor of $B$ relative to $P$ cannot be the last floor of $B$, since otherwise by \Cref{lem:psi}, all floors of $B$ are counted by $\return(P)$, thus $k = \ell = r$, which is impossible as $r < k$.
  Therefore, the $(\brun(P) + 1)$-st floor of $B$ and the top run floor of $P$ exist, and \Cref{constr:lifting-psi} is always well-defined.
  It is also clear that the procedure always terminates.
  Just before the block lifting step, by the definitions of $\truf$ and $\brun$, the top run floor of $P$ has height $\fh_{\truf(P)}(P)$ such that $\fh_{\brun(P)}(B) \leq \fh_{\truf(P)}(P) < \fh_{\brun(P) + 1}(B)$.
  Hence, the floors with height between $\fh_{\brun(P)}(B)$ and $\fh_{\brun(P) + 1}(B)$ are lifted at most to the height $\fh_{\brun(P) + 1}$(B), and after the block lifting step, the path remains bounded by $B$.
  Since the boundary constraint is stable under floor lifting, $\psi(P) \in \overline{\CP}_B$.

  Next we show that $\psi(P) \in \overline{\CP}_B^r(k - 1,\ell + 1)$.
  As $r < k$, \Cref{lem:psi} implies that the bottom run floor of $P$ does not share steps with the boundary $B$.
  Moreover, $P$ has no floor at height $\fh_{\brun(P)+1}(B)$, which means the bottom run floor, which is of height $\fh_{\brun(P)}(B)$, is lifted by $\fh_{\brun(P) + 1}(B) - \fh_{\truf(P)}(P) > 0$, thus $\brun(\psi(P)) = \brun(P) - 1$.
  If $\psi$ terminates at the block lifting procedure, as $P \in \overline{\CP}_B$, the top run floor of $P$ coincides with its bottom run floor, otherwise it violates the boundary constraint with the $\brun(P)$-th floor of $B$.
  The lifting of such a floor of $P$ makes a new return.
  Otherwise, we observe that the ordinary lifting is the same as \Cref{constr:lifting} except for initialization.
  With the same argument as in the proof of \Cref{lem:phi2}, a new return is also created afterwards.
  We thus have $\return(\psi(P)) = \return(P) + 1$ in both cases.
  By \Cref{lem:psi}, \Cref{constr:lifting-psi} does not change the number of floors of $P$ that are counted by both $\brun$ and $\return$, which means $\brr(\psi(P)) = \brr(P)$ and $\psi(P) \in \overline{\CP}_B^r(k - 1, \ell + 1)$.
  As only floor heights are changed, $\psi$ preserves the composition-type of $P$.
\end{proof}

We now lay out the inverse of $\psi$, with an example in \Cref{fig:psi1,fig:psi2}.

\begin{construction} \label{constr:descending-psi}
  For a boundary $B$ and positive integers $r, \ell, k$ with $r < \ell$ and $r \leq k$, we define a map $\overline{\psi}$ from $\overline{\CP}_B^r(k, \ell)$ to $\overline{\CP}_B^r(k + 1, \ell - 1)$ as follows.
  For $P' \in \overline{\CP}_B^r(k, \ell)$, let $\trf(P')$ be the index of the first floor of $P'$ above the bottom run floor that shares steps with $B$, and $\bref(P')$ the index of the first floor of $P'$ above the height $\fh_{\brun(P')+1}(B)$.
  We then do the following:
  \begin{enumerate}
  \item \textbf{Ordinary lowering}:
    If $\fh_{\trf(P')}(P') \neq \fh_{\brun(P')+2}(B)$, do the following steps:
    \begin{enumerate}
    \item Set the current floor to be the $\trf(P')$-th floor of $P'$;
    \item Lower the current floor down until reaching height $\fh_{\brun(P')+2}(B)$ or one unit above the preceding floor;
    \item If the current floor is not yet at height $\fh_{\brun(P')+2}(B)$, go to the previous step.
    \end{enumerate}
  \item \textbf{Block lowering}: Lower the floors of heights between $\fh_{\brun(P') + 1}(P')$ and $\fh_{\brun(P') + 2}(P')$ by
    \[
      \min\left\{\fh_{\bref(P')}(P'), \fh_{\brun(P') + 2}(B)\right\} - \fh_{\brun(P') + 1}(B).
    \]
    Then terminate.
  \end{enumerate}
  We set $\overline{\psi}(P')$ to be the path thus obtained after the procedure.
\end{construction}

\begin{lemma} \label{lem:psi-inv-defined}
  For a boundary $B$ and positive integers $r, k, \ell$ such that $r \leq \ell$ and $r < k$, the map $\overline{\psi}$ is always well-defined on $\overline{\CP}_B^r(k - 1, \ell + 1)$, and sends it to $\overline{\CP}_B^r(k, \ell)$, while preserving composition-type.  
\end{lemma}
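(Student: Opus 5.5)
We mirror the argument for $\overline{\phi}$ in the proof of \Cref{lem:phi2}, adapted to the two-phase structure of \Cref{constr:descending-psi}. Fix $P' \in \overline{\CP}_B^r(k-1,\ell+1)$ and write $s = \brun(P') = k-1$. The proof has three parts: feasibility of the steps, the claim that only floor heights change so the result stays in $\CP_B$, and the bookkeeping $\brun \mapsto s+1$, $\return \mapsto \ell$, $\brr$ unchanged.

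\emph{Feasibility.} Since $r < \ell+1$, \Cref{lem:psi} shows that the floors of $B$ counted by both statistics are exactly the first $r \le s$ floors. Hence some floor of $B$ beyond the boundary run floor is met by $P'$. Moreover, the $(s+1)$-st floor of $B$ is not hit at its own height, by maximality of $\brun$. So $\trf(P')$ exists, and its height is at least $\fh_{s+2}(B)$. Note also that $B$ has at least $s+2$ floors, so $\fh_{s+2}(B)$ is defined.

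\emph{Ordinary lowering.} I would show that this phase is the reverse of the ordinary lifting phase of $\psi$. We lower from the $\trf(P')$-th floor downward. The first lowered floor $F$ shares an $E$-step with $B$, and its predecessor does not, by minimality of $\trf$. The argument of \Cref{lem:phi2} applies here: boundary constraint forbids the predecessor's projection from lying inside that of $F$'s first-cover floor. This forces a height gap of at least $2$, so $F$ genuinely leaves the boundary. Lowering never creates new contacts, so exactly one return is lost. The phase stops when a floor reaches $\fh_{s+2}(B)$. This must occur because the bottom run floor sits at $\fh_s(B) < \fh_{s+2}(B)$, and lowering to one unit above the predecessor eventually hits that height or passes through the interval.

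\emph{Block lowering.} Here we lower the floors in the height window $[\fh_{s+1}(B),\fh_{s+2}(B))$ back. The bottom run floor is pushed below $\fh_{s+1}(B)$ by exactly the shift $\min\{\fh_{\bref(P')}(P'),\fh_{s+2}(B)\}-\fh_{s+1}(B)$; this is the quantity the lift in $\psi$ added. After the shift, some floor of $P'$ lands at height $\fh_{s+1}(B)$: it is the floor that sat at the top of the window, which after ordinary lowering is at height $\fh_{s+2}(B)$ or at $\fh_{\bref(P')}(P')$. We must verify that boundedness is preserved, which is trivial since we only lower. We must also verify that no floor is lowered past its predecessor, which holds because the window's lowest floor lies above the bottom run floor by the definition of $\bref$. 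Then $\{\fh_1(B),\dots,\fh_{s+1}(B)\}\subseteq \fh(\overline{\psi}(P'))$ holds. To get $\brun = s+1$ exactly, we need $\fh_{s+2}(B)$ to be absent afterwards, which ordinary/block lowering guarantees. Finally, $\brr$ is unchanged by \Cref{lem:psi}, since the first $r$ floors are untouched.

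\emph{Main obstacle.} The main obstacle is the block lowering bookkeeping. One must show that when the ordinary lowering phase is skipped (the case $\fh_{\trf}=\fh_{s+2}(B)$), the lost return comes from the block step itself. This requires the boundary-constraint argument of \Cref{lem:psi-defined}: the top of the window coincides with a floor touching the $(s+2)$-nd boundary floor, and the shift strictly separates it from $B$. I would split into these two cases explicitly. Composition-type preservation is immediate, since only heights move, and boundary constraint depends only on $\comp$.
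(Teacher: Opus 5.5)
Your proposal follows the paper's route. You use \Cref{lem:psi} for the existence of the $\trf(P')$-th floor and of the $(s+2)$-nd floor of $B$, and the boundary-constraint gap argument of \Cref{lem:phi2} to show that this floor genuinely drops in ordinary lowering. You treat the skipped case separately and use \Cref{lem:psi} again for $\brr$. Your feasibility step is slightly sharper than the paper's: you note that $\fh_{s+1}(B)\notin\fh(P')$ forces $\fh_{\trf(P')}(P')\ge\fh_{s+2}(B)$.

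The block-lowering paragraph, however, is wrong as written and needs repair. Write $\delta=\min\{\fh_{\bref(P')}(P'),\fh_{s+2}(B)\}-\fh_{s+1}(B)>0$ for the shift.
\begin{enumerate}
\item The window is $(\fh_{s+1}(B),\fh_{s+2}(B)]$, not $[\fh_{s+1}(B),\fh_{s+2}(B))$. After ordinary lowering, or when it is skipped, the top window floor sits exactly at $\fh_{s+2}(B)$, all later floors are strictly higher, and that top floor must move.
\item The floor that lands at $\fh_{s+1}(B)$ is the lowest window floor, namely the $\bref(P')$-th one. Its current height is $\min\{\fh_{\bref(P')}(P'),\fh_{s+2}(B)\}$, which are the values you quote. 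It is not the top floor: when the window holds several floors, the top one only drops to $\fh_{s+2}(B)-\delta$.
\item The bottom run floor, at height $\fh_s(B)$, is not moved at all.
\end{enumerate}

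With this reading the remaining checks go through as follows.
\begin{itemize}
\item $\fh_{s+2}(B)$ is vacated because $\delta>0$.
\item Floor order is preserved because the predecessor of the $\bref(P')$-th floor lies strictly below $\fh_{s+1}(B)$, a height unoccupied in $P'$. This is the right justification, not a comparison with the bottom run floor.
\item Exactly one return is lost. Every lowered floor other than the $\trf(P')$-th one lies strictly between the bottom run floor and the $\trf(P')$-th floor, so it is not a return by minimality of $\trf(P')$. Lowering creates no new contacts, as you say.
\end{itemize}

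Two smaller points. First, the predecessor of the $\trf(P')$-th floor may be the bottom run floor, which is a return when $r=s$, so minimality does not cover it. In that case the gap is at least $\fh_{s+2}(B)-\fh_s(B)\ge 2$ anyway. Second, in the skipped case this lemma does not need the boundary constraint, since any positive shift already destroys the contact. The paper uses the constraint there to obtain $\bref(P')=\trf(P')$, which is what \Cref{lem:psi2} requires for invertibility.
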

\begin{proof}
  We first show that $\overline{\psi}$ is well-defined.
  Let $P' \in \overline{\CP}_B^r(k - 1, \ell + 1)$.
  Since $\ell + 1 > r$, \Cref{lem:psi} guarantees at least one floor after the bottom run floor of $P'$ that shares steps with the boundary $B$, which implies that $\trf(P')$ is well-defined.
  Moreover, the $(\brun(P') + 1)$-st floor of $B$ cannot be the last floor, which implies that the $(\brun(P') + 2)$-nd floor of $B$ and the $\bref(P')$-th floor of $P'$ exist.
  Therefore, the steps in \Cref{constr:descending-psi} are well-defined, and its termination is clear, meaning that $\overline{\psi}$ is well-defined.

  Next we show that $\overline{\psi}(P') \in \overline{\CP}_B^r(k, \ell)$.
  Let $F$ be the $\trf(P')$-th floor of $P'$, and $F'$ the floor preceding $F$.
  If $F$ is of height $\fh_{\brun(P') + 2}(B)$, we skip the ordinary lowering and go directly to the block lowering.
  By the minimality of $\trf(P')$, $F'$ cannot share steps with $B$ while being of height $\fh_{\brun(P') + 1}(B)$, which implies that if $F'$ is of height higher than $\fh_{\brun(P') + 1}(B)$, it will violate the boundary constraint with the $(\brun(P') + 2)$-nd floor of $B$.
  Hence, $F$ is the only floor in $P'$ with height between $\fh_{\brun(P') + 1}(B)$ and $\fh_{\brun(P') + 2}(B)$, thus $\bref(P') = \trf(P')$.
  Therefore, $F$ is lowered by $\fh_{\brun(P') + 2}(B) - \fh_{\brun(P') + 1}(B)$ in the block lowering step, to the height $\fh_{\brun(P') + 1}(B)$.
  If the height of $F$ is greater than $\fh_{\brun(P') + 2}(B)$, then $F$ is lowered in the ordinary lowering step in \Cref{constr:descending-psi} to the height $\fh_{\brun(P') + 2}(B)$ or one plus the height of $F'$.
  In the latter case, using the same argument as for $F$ of height $\fh_{\brun(P') + 2}(B)$, by the boundary constraint, the height difference between $F$ and $F'$ is greater than $1$.
  Therefore, in all cases $F$ is lowered by a positive amount, and hence $\return(\overline{\psi}(P')) = \return(P') - 1$.
  Moreover, the block lowering procedure always puts the $\bref(P')$-th floor of $P'$ at height $\fh_{\brun(P')+1}(B)$: either it has already been lowered to height $\fh_{\brun(P') + 2}(B)$ in the ordinary lowering step when $\fh_{\bref{P'}}(P') > \fh_{\brun(P') + 2}(B)$, or it remains untouched otherwise.
  Hence, $\brun(\overline{\psi}(P')) = \brun(P') + 1$.
  Finally, by \Cref{lem:psi}, \Cref{constr:descending-psi} does not change the number of floors of $P'$ counted by both $\brun$ and $\return$, thus $\brr(\overline{\psi}(P')) = \brr(P')$ and $\overline{\psi}(P') \in \overline{\CP}_B^r(k, \ell)$.
  It is also clear that $\overline{\psi}$ preserves the composition-type of $P'$ as only floor heights are changed.
\end{proof}

\begin{lemma}\label{lem:psi2}
  For a boundary $B$ and positive integers $r, k, \ell$ such that $r \leq \ell$ and $r < k$, the map $\psi$ is a composition-type-preserving bijection from  $\overline{\CP}_B^r(k, \ell)$ to $\overline{\CP}_B^r(k - 1, \ell + 1)$ with $\overline{\psi}$ its inverse.
\end{lemma}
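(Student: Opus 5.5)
Since \Cref{lem:psi-defined} and \Cref{lem:psi-inv-defined} already show that $\psi$ and $\overline{\psi}$ are well-defined, map between $\overline{\CP}_B^r(k,\ell)$ and $\overline{\CP}_B^r(k-1,\ell+1)$ in the stated directions, and preserve composition-type, it remains to check that $\overline{\psi}\circ\psi$ and $\psi\circ\overline{\psi}$ are identities. I would follow the same strategy as in the proof of \Cref{lem:phi2}. Each map moves only floor heights, and each is a short sequence of moves: $\psi$ is a block lifting followed by an ordinary lifting, and $\overline{\psi}$ is an ordinary lowering followed by a block lowering. The plan is to show that each phase of $\overline{\psi}$ undoes the corresponding phase of $\psi$, in reverse order, and conversely.

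For $\overline{\psi}(\psi(P))=P$, fix $P\in\overline{\CP}_B^r(k,\ell)$, write $b=\brun(P)$, and let $P'=\psi(P)$, so that $\brun(P')=b-1$. I then identify the quantities used by \Cref{constr:descending-psi} on $P'$.
\begin{enumerate}
\item The height $\fh_{\brun(P')+1}(B)$ equals $\fh_b(B)$, and $\fh_{\brun(P')+2}(B)$ equals $\fh_{b+1}(B)$, which is the height to which the block lifting sent the top run floor of $P$.
\item The index $\trf(P')$ is the floor at which $\psi$ terminated. The ordinary lifting stops at the first floor hitting $B$, and the floors lifted before it do not touch $B$. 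Also, no floor of $P$ strictly between the bottom run floor and that terminal floor shares steps with $B$; this follows from \Cref{lem:psi} and $r<k$.
\item The ordinary lowering then reverses the ordinary lifting floor by floor, exactly as in \Cref{lem:phi2}. Its stopping condition, reaching height $\fh_{b+1}(B)$, is satisfied precisely at the lifted top run floor.
\item The index $\bref(P')$ is the lifted top run floor, or the lowest lifted floor if the ordinary lifting moved nothing. So the block-lowering shift $\min\{\fh_{\bref}(P'),\fh_{b+1}(B)\}-\fh_b(B)$ should be compared with the block-lifting shift $\fh_{b+1}(B)-\fh_{\truf(P)}(P)$.
\end{enumerate}

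The main obstacle is the block step in item 4. Block lifting moves the floors of heights in $[\fh_b(B),\fh_{b+1}(B))$ by $\delta=\fh_{b+1}(B)-\fh_{\truf(P)}(P)$. The block lowering must recover the same set of floors and the same $\delta$, and the way it is written in \Cref{constr:descending-psi} (via $\fh_{\bref}$ and a minimum) does not obviously give $\delta$. I would first check whether the lowered set and the shift are read off correctly from the gap between the bottom run floor of $P'$ and the first floor above it. I expect to use the boundary constraint, as in the proof of \Cref{lem:psi-inv-defined}: the floors at heights in $[\fh_b(B)+\delta,\fh_{b+1}(B)]$ after lifting are exactly the images of the block, and no untouched floor of $P$ lies in this window, since untouched floors are either below $\fh_b(B)$ or at least $\fh_{b+1}(B)$ and not lifted. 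I would verify this case by case, separating the case where $\psi$ terminates at the block lifting from the case where it continues into ordinary lifting. If the literal formula does not match, I would read the block lowering as moving the block so that its bottom lands at $\fh_b(B)$, which is what the proof of \Cref{lem:psi-inv-defined} uses.

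For $\psi(\overline{\psi}(P'))=P'$, take $P=\overline{\psi}(P')$. After the block lowering, the highest floor of the lowered block is the top run floor of $P$, and block lifting sends it back to height $\fh_{\brun(P')+2}(B)$ by the same shift. Ordinary lifting then retraces the ordinary lowering. It cannot stop early, because that would require a floor of $P'$ before $\trf(P')$ hitting $B$, which contradicts the minimality of $\trf(P')$, just as in the last paragraph of the proof of \Cref{lem:phi2}.
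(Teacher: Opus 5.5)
Your outline is the paper's: reduce to showing that $\overline{\psi}\circ\psi$ and $\psi\circ\overline{\psi}$ are identities, let \Cref{lem:phi2} handle the ordinary phases, and then match the block phases. The gap is that the block phase, the only part not already covered by \Cref{lem:phi2}, is never verified, and the one concrete claim you make about it (item 4) is false.

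$\bref(P')$ is not the lifted top run floor. It is always the image of the bottom run floor $F$ of $P$: the floors before $F$ lie below $\fh_b(B)$ and are untouched, $F$ moves up by $\delta>0$, and $P'$ has no floor at height $\fh_b(B)$. With your identification, a block of at least two floors followed by a nontrivial ordinary lifting gives the shift $\fh_{b+1}(B)-\fh_b(B)>\delta$. That is presumably why the formula does not look like it returns $\delta$. Your remedy of re-reading the block lowering changes the map instead of proving the lemma about it.

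With the correct identification the literal formula works, by the paper's two cases:
\begin{enumerate}
\item If the block has at least two floors, ordinary lifting never touches $F$. It sits at $\fh_b(B)+\delta<\fh_{b+1}(B)$, so the shift is $\delta$.
\item If the block is the single floor $F$, then $\delta=\fh_{b+1}(B)-\fh_b(B)$ and $F$ has height $\geq\fh_{b+1}(B)$ in $P'$. The minimum is $\fh_{b+1}(B)$, ordinary lowering has already put $F$ back at $\fh_{b+1}(B)$, and the shift is again $\delta$.
\end{enumerate}
Your fallback reading, where the bottom lands at $\fh_b(B)$, is exactly this, so no reinterpretation is needed.

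You also do not check that the early exits correspond, and your argument for $\psi\circ\overline{\psi}=\mathrm{id}$ silently assumes it.

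In the forward direction you need the boundary-constraint fact from \Cref{lem:psi-defined}: block lifting can only terminate when the block is a single floor. Otherwise $F$, which shares no step with $B$, lies under floors of $B$ of index $\geq b+1$, and \Cref{prop:bounded-index}(2) puts the top run floor under floors of index $\geq b+2$. Hence in that case $\trf(P')=\bref(P')$ sits at $\fh_{b+1}(B)$ and ordinary lowering is skipped.

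In the backward direction you need block lifting on $\overline{\psi}(P')$ to terminate exactly when ordinary lowering was skipped. Minimality of $\trf(P')$ does not settle this. The floor that block lifting places at $\fh_{b+1}(B)$ may have had a larger height in $P'$; it may even be $\trf(P')$ itself. What is needed is that a floor at height $h>\fh_{b+1}(B)$ in $P'$ lies under steps of $B$ of height $\geq h$, so it cannot touch $B$ at height $\fh_{b+1}(B)$. In the skipped case you need, from the proof of \Cref{lem:psi-inv-defined}, that $\trf(P')=\bref(P')$ is the only floor of $P'$ with height in $(\fh_b(B),\fh_{b+1}(B)]$. Then the lowered block is a single floor, and block lifting returns it to its contact with $B$.
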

\begin{proof}
  By \Cref{lem:psi-defined,lem:psi-inv-defined}, we only need to show that $\psi$ and $\overline{\psi}$ are inverse of each other.
  Let $P$ (resp. $P'$) be the bounded paths involved in \Cref{constr:lifting-psi} (resp. \Cref{constr:descending-psi}).
  We assume implicitly that they are linked either by $\psi$ or $\overline{\psi}$ in the following, depending on the direction of bijections we are dealing with.
  It is clear that the ordinary lifting is the inverse of the ordinary lowering, because they concern the starting point of the former is the floor of the same height $\fh_{\brun(P) + 1}(B) = \fh_{\brun(P') + 2}(B)$ as the ending point of the latter, and the arguments in the proof of \Cref{lem:phi2} apply.
  Hence, we only need to show that the block lifting in \Cref{constr:lifting-psi} is the inverse of the block lowering in \Cref{constr:descending-psi}.

  We first check the early termination conditions.
  In the proof of \Cref{lem:psi-defined}, the ordinary lifting is skipped if and only if the top run floor of $P$ coincides with its bottom run floor, which shares steps with $B$, and is lifted to height $\fh_{\brun(P) + 1}(B) = \fh_{\brun(P') + 2}(B)$.
  A floor of such height in $P'$ is clearly the $\trf(P')$-th floor of $P'$.
  Furthermore, it is the unique floor in $P'$ with height between $\fh_{\brun(P') + 1}(B)$ and $\fh_{\brun(P') + 2}(B)$ according to the proof of \Cref{constr:descending-psi}, which implies $\bref(P') = \trf(P')$.
  Hence, the ordinary lowering in \Cref{constr:descending-psi} for $P'$ is skipped exactly when the ordinary lifting in \Cref{constr:lifting-psi} for $P$ is skipped.

  Since $\brun(P) = \brun(P') + 1$, both the block lifting and the block lowering do not move floors in the related height range outside the same range.
  We only need to show that the distances of moving in both directions are the same.
  This distance in \Cref{constr:lifting-psi} is $\fh_{\brun(P) + 1}(B) - \fh_{\truf(P)}(P)$.
  Hence, the floor $F$ of height $\fh_{\brun(P)}(B)$ of $P$ corresponds exactly to the first floor $F'$ of $P'$ above the height $\fh_{\brun(P)}(B) = \fh_{\brun(P') + 1}(B)$, meaning that it is the $\bref(P')$-th floor of $P'$.
  However, the height of $F$ in $P'$ may change due to ordinary lifting, which happens if and only if it is also the top run floor of $P$, thus when $\fh_{\bref(P')}(P') > \fh_{\brun(P') + 2}(B)$.
  In this case, $F'$ is lowered to the height $\fh_{\brun(P') + 2}(B)$ in the ordinary lowering, then further lowered to the height $\fh_{\brun(P') + 1}(B) = \fh_{\brun(P)}(B)$ in the block lowering.
  Otherwise, we have $\fh_{\bref(P')}(P') \leq \fh_{\brun(P') + 2}(B)$, and $F'$ is not moved in ordinary lowering.
  In this case, $F'$ is also lowered back to the same height.
  Hence, the distances of moving for both block lifting and block lowering are the same for corresponding paths $P$ and $P'$ in both directions.
  We thus have the bijectivity.
\end{proof}

\begin{corollary}\label{coro:hankel3}
  Let $B$ be a boundary path with $n-1$ peaks. For any positive integers $k,\ell$ with $\ell+2\le k\le n$, we have
  \begin{equation}\label{eq:psi1}
    \big|\overline{\CP}_B(k,\ell)\big|<\big|\overline{\CP}_B(k-1,\ell+1)\big|.
  \end{equation}
\end{corollary}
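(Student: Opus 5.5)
Throughout, $B$ has $n-1$ peaks and hence $n$ floors. The plan is to decompose both sides according to the value of $\brr$ and compare them stratum by stratum:
\[
  \big|\overline{\CP}_B(k,\ell)\big| = \sum_{r} \big|\overline{\CP}_B^r(k,\ell)\big|,
  \qquad
  \big|\overline{\CP}_B(k-1,\ell+1)\big| = \sum_{r} \big|\overline{\CP}_B^r(k-1,\ell+1)\big|.
\]
Here $r$ ranges over $r \le \min(k,\ell)$ on the left and $r \le \min(k-1,\ell+1)$ on the right. By \Cref{lem:psi}, the floors of $B$ counted by $\brr$ are initial and consecutive, so $\brr(P) \le \min(\brun(P),\return(P))$.

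\textbf{Step 1: the strata with $r \le \ell$.} Since $k \ge \ell+2$, every such $r$ satisfies $r \le \ell$ and $r < k$. \Cref{lem:psi2} then says that $\psi$ is a bijection from $\overline{\CP}_B^r(k,\ell)$ onto $\overline{\CP}_B^r(k-1,\ell+1)$. Summing over $r \le \ell$, the whole left-hand side injects into the right-hand side. This already gives the weak inequality $\le$.

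\textbf{Step 2: the extra stratum $r = \ell+1$.} This is the one value of $r$ allowed on the right but not on the left. It is admissible because $k-1 \ge \ell+1$. I will show $\overline{\CP}_B^{\ell+1}(k-1,\ell+1) \neq \emptyset$ by taking $s = k-1$ and $r = \ell+1$ in the existence part of \Cref{lem:psi}. That part requires $r \le s \in [n-1]$, i.e.\ $\ell+1 \le k-1 \le n-1$, which holds because $\ell+2 \le k \le n$. It produces a path $P'$ with $(\brun,\return,\brr)(P') = (k-1,\ell+1,\ell+1)$. Since $\psi$ preserves $\brr$, $P'$ lies outside the image of the injection from Step 1. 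Hence the inequality is strict.

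\textbf{Main check.} The only delicate point is that \Cref{lem:psi} supplies $P'$ inside $\CP_B$, i.e.\ that the constructed path is boundary constrained. I will take this as given from the lemma. I will also check the indexing: the condition $k \le n$ is exactly what ensures $k-1 \le n-1$. The rest is bookkeeping.
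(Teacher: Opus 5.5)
Your proposal is correct and is essentially the paper's own proof. You stratify by $\brr$ and match the strata $r\le\ell$ bijectively via $\psi$ from \Cref{lem:psi2}. Strictness then comes from the nonempty extra stratum $\overline{\CP}^{\ell+1}_B(k-1,\ell+1)$, which the existence part of \Cref{lem:psi} supplies with $(s,r)=(k-1,\ell+1)$. One point is left unstated: \Cref{lem:psi2} requires $r$ to be positive, and indeed $r\ge 1$ in every stratum, because the virtual $E$-step makes the first floor of $B$ count in both $\brun$ and $\return$.
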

\begin{proof}
  By Lemma~\ref{lem:psi}, we have $\big|\overline{\CP}^{\ell+1}_B(k-1,\ell+1)\big|\neq 0$.
  The result then follows directly from Lemma~\ref{lem:psi2}.
\end{proof}

\begin{figure}
  \centering
  \begin{tikzpicture}[
    scale=0.35,
    >=stealth,
    grid/.style={gray!60, thin},
    boundary/.style={},
    pathA/.style={blue!60, very thick},
    pathB/.style={very thick},
    highlight/.style={red, dotted, very thick}
    ]

    \begin{scope}
      \draw[grid] (0,0) grid (8,11);
      \draw[boundary] (0,0) rectangle (8,11);
      \draw[pathA]
      (0,0) -- (0,2) -- (1,2) -- (1,4) -- (3,4) 
      -- (3,6) -- (4,6) -- (4,9) -- (5,9) -- (5,10)
      -- (7,10) -- (7,11) -- (8,11);


      \draw[highlight] (7,9) rectangle (8,11);

      \draw[pathB]
      (-1,0)--(0,0) -- (0,1) -- (2,1) -- (2,2) -- (3,2) -- (3,4) -- (4,4) --(4,5)-- (5,5) -- (5,6) -- (6,6)
      -- (7,6) --(7,8)-- (8,8) -- (8,11);

      \draw[red, very thick] (5,6) -- (6,6)
      -- (7,6) -- (7,8) -- (8,8);
      \node at (4,-1){$(4,1)$};
    \end{scope}

    \begin{scope}[xshift=9cm, yshift=5cm]
      \draw[->, thick] (0,0) -- (1.2,0) node[midway,above] {$\psi$};
      \draw[<-, thick] (0,-0.4) -- (1.2,-0.4) node[midway,below] {$\psi^{-1}$};
    \end{scope}

    \begin{scope}[xshift=11cm]
      \draw[grid] (0,0) grid (8,11);
      \draw[boundary] (0,0) rectangle (8,11);
      \draw[pathA]
      (0,0) -- (0,2) -- (1,2) -- (1,4) -- (3,4) 
      -- (3,6) -- (4,6) -- (4,9) -- (5,9) -- (5,10)
      -- (7,10) -- (7,11) -- (8,11);

      \draw[highlight] (5,7) rectangle (7,10);

      \draw[pathB]
      (-1,0)--(0,0) -- (0,1) -- (2,1) -- (2,2) -- (3,2) -- (3,4) --(4,4)--(4,5)-- (5,5) -- (5,7)--(7,7)--(7,11)-- (8,11) ;
      \draw[red, very thick] (3,4) --(4,4)--(4,5)-- (5,5) ;
      \node at (4,-1){$(3,2)$};
    \end{scope}

    \begin{scope}[xshift=20cm, yshift=5cm]
      \draw[->, thick] (0,0) -- (1.2,0) node[midway,above] {$\psi$};
      \draw[<-, thick] (0,-0.4) -- (1.2,-0.4) node[midway,below] {$\psi^{-1}$};
    \end{scope}

    \begin{scope}[xshift=22cm]
      \draw[grid] (0,0) grid (8,11);
      \draw[boundary] (0,0) rectangle (8,11);
      \draw[pathA]
      (0,0) -- (0,2) -- (1,2) -- (1,4) -- (3,4) 
      -- (3,6) -- (4,6) -- (4,9) -- (5,9) -- (5,10)
      -- (7,10) -- (7,11) -- (8,11);

      \draw[pathB]
      (-1,0)--(0,0) -- (0,1) -- (2,1) -- (2,2) -- (3,2) -- (3,5) --(4,5)--(4,6)-- (5,6) -- (5,10)--(7,10)--(7,11)-- (8,11) ;
      \draw[red, very thick] (2,2) --(3,2);
      \node at (4,-1){$(2,3)$};
    \end{scope}

    \begin{scope}[xshift=31cm, yshift=5cm]
      \draw[->, thick] (0,0) -- (1.2,0) node[midway,above] {$\psi$};
      \draw[<-, thick] (0,-0.4) -- (1.2,-0.4) node[midway,below] {$\psi^{-1}$};
    \end{scope}

    \begin{scope}[xshift=33cm]
      \draw[grid] (0,0) grid (8,11);
      \draw[boundary] (0,0) rectangle (8,11);
      \draw[pathA]
      (0,0) -- (0,2) -- (1,2) -- (1,4) -- (3,4) 
      -- (3,6) -- (4,6) -- (4,9) -- (5,9) -- (5,10)
      -- (7,10) -- (7,11) -- (8,11);

      \draw[pathB]
      (-1,0)--(0,0) -- (0,1) -- (2,1) -- (2,4) -- (3,4) -- (3,5) --(4,5)--(4,6)-- (5,6) -- (5,10)--(7,10)--(7,11)-- (8,11) ;

      \node at (4,-1){$(1,4)$};
    \end{scope}
  \end{tikzpicture}
  \caption{An example of $\Psi$, where the pair $(\brun, \return)$ is indicated under each figure.}
  \label{fig:Psi}
\end{figure}

\begin{theorem}\label{thm:Psi}
  Given a boundary $B$, we have the symmetry
  \begin{equation}
    \sum_{P \in \CP_B} q^{\brun(P)} t^{\return(P)} =
    \sum_{P \in \CP_B} q^{\return(P)} t^{\brun(P)}.
  \end{equation}
\end{theorem}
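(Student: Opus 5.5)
The plan mirrors the proofs of \Cref{thm:Phi,thm:Theta}. First, $\CP_B$ is partitioned into the sets $\overline{\CP}_B^r(k,\ell)$. Here $k=\brun(P)$, $\ell=\return(P)$ and $r=\brr(P)$. By \Cref{lem:psi}, the floors of $B$ counted by $\brr(P)$ are initial and consecutive, so $r\le\min(k,\ell)$ always holds. Every $P\in\CP_B$ therefore lies in exactly one such set with $k,\ell\ge r$.

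Next, define $\Psi$ on each block. For $P\in\overline{\CP}_B^r(k,\ell)$ with $k\ge\ell$, set $\Psi(P):=\psi^{k-\ell}(P)$. For $k<\ell$, set $\Psi(P):=\overline{\psi}^{\,\ell-k}(P)$.

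We must check that these iterates are defined. Apply \Cref{lem:psi2} successively to the triples
\[
(k,\ell),\ (k-1,\ell+1),\ \dots,\ (\ell+1,k-1).
\]
At each of these applications the current first index exceeds $\ell\ge r$, so the hypothesis $r<k'$ holds. The hypothesis $r\le\ell'$ also holds, since the second index only grows. Thus each $\psi$ step is a bijection
\[
\overline{\CP}_B^r(k',\ell')\to\overline{\CP}_B^r(k'-1,\ell'+1).
\]
Composing $k-\ell$ of them gives a bijection
\[
\overline{\CP}_B^r(k,\ell)\to\overline{\CP}_B^r(\ell,k).
\]
Its inverse is the corresponding power of $\overline{\psi}$. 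This shows that $\Psi$ is defined on the whole block and maps it bijectively onto the block with $k$ and $\ell$ swapped.

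Now $\Psi$ is an involution. If $k>\ell$, then $\Psi(P)$ lies in $\overline{\CP}_B^r(\ell,k)$, and there $\Psi$ acts by $\overline{\psi}^{\,k-\ell}$, which is the inverse of $\psi^{k-\ell}$. The case $k<\ell$ is symmetric, and for $k=\ell$ the map $\Psi$ is the identity. Since $\Psi$ exchanges $\brun$ and $\return$, summing $q^{\brun}t^{\return}$ over $\CP_B$ is the same as summing $q^{\return}t^{\brun}$, which is the claimed symmetry.

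The only point needing care is the index bookkeeping: every intermediate step must stay within the hypotheses $r\le\ell'$, $r<k'$ of \Cref{lem:psi2}, and the partition must cover all of $\CP_B$. Both are immediate from \Cref{lem:psi}, so I expect no real obstacle. The substantive work was already done in \Cref{lem:psi-defined,lem:psi-inv-defined,lem:psi2}.
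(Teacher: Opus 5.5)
Your proposal is correct and is essentially the paper's proof: you partition $\CP_B$ by $(\brun,\return,\brr)$, set $\Psi=\psi^{k-\ell}$ on $\overline{\CP}_B^r(k,\ell)$ (with negative powers read as powers of $\overline{\psi}$), and conclude from \Cref{lem:psi2}; your explicit check that every intermediate pair $(k',\ell')$ satisfies $r\le\ell'$ and $r<k'$ spells out bookkeeping the paper leaves implicit. One small correction: $r\le\min(k,\ell)$ follows directly from the definition of $\brr$ rather than from \Cref{lem:psi}, and $r\ge 1$ (needed because \Cref{lem:psi2} is stated for positive $r$) holds because $P$ and $B$ share the virtual $E$-step.
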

\begin{proof}
  For $\alpha \in \overline{\CP}_B^r(k, \ell)$, we set $\Psi(\alpha) := \psi^{k-\ell}(\alpha)$; see \Cref{fig:Psi} for an example.
  It follows from Lemma~\ref{lem:psi2} that $\Psi$ is an involution exchanging $\brun$ and $\return$, which implies our claim.
\end{proof}

\subsection{The symmetry of $(\run_\c,\return)$ on $\CP_B$}\label{subsec3.4}

Let $B$ be a boundary path and $\c = (c_1, \dots, c_m)$ a composition such that $\c \le_c \vc(B)$.
For a path $P \in \P_B$ bounded by $B$, we say that a part $c_i$ of $\c$ is counted by $\run_\c(P)$ (resp.~$\return(P)$) if $P$ has a floor at height $c_1 + \dots + c_i$ that is counted by $\run_\c(P)$ (resp.~$\return(P)$).
Let $\rr_\c(P)$ denote the number of parts of $\c$ counted by both $\run_\c$ and $\return$.
For convenience, we denote by $\psum(\c)$ the \tdef{partial sum sequence} of $\c$ with the same length as $\c$, i.e., $\psum(\mathbf{c})_j = \sum_{i = 1}^j c_i$ for $1 \leq j \leq m$.
We note that $\c \leq_c \vc(B)$ implies  $\psum(\c)\leq_c \fh(B)$.
We have the following result.

\begin{lemma}\label{lem:comp}
  For a path $P \in \CP_B$ and a composition $\c \le_c \vc(B)$, floors of $P$ with height less than $\psum(\c)_{\run_\c(P)}$ cannot be counted only by $\return(P)$, and the parts of $\c$ that are counted by both $\run_\c(P)$ and $\return(P)$ are initial and consecutive. 
\end{lemma}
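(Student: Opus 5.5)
We argue in two steps, mirroring the proofs of \Cref{lem:phi} and \Cref{lem:psi}. Write $s_j = \psum(\c)_j$ and $k = \run_\c(P)$. Since $\c \le_c \vc(B)$, we have $s_j \le \fh_j(B)$ for all $j \le m$. The basic tool is the monotonicity from \Cref{prop:bounded-index}: if the $i$-th floor of $P$ has first-cover floor of index $j$ in $B$, then the $i'$-th floor with $i' > i$ has first-cover index at least $j + (i'-i)$. In particular, a floor of $P$ at height $h$ that shares a step with $B$ must sit on a floor of $B$ of height exactly $h$, and this floor is its first-cover floor.

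\emph{First claim.} Let $F$ be a floor of $P$ with height $h < s_k$ that shares a step with $B$, say with the $j$-th floor $G$ of $B$, so $\fh_j(B) = h$. We must show that $F$ is counted by $\run_\c$, i.e.\ that $h = s_i$ for some $i \le k$. The heights $s_1 < \dots < s_k$ all occur among the floor heights of $P$.
\begin{itemize}
\item Let $i$ be minimal with $s_i \ge h$; then $i \le k$.
\item Since $s_i \le \fh_i(B)$, the floors of $B$ with index at most $i-1$ have heights at least $s_1, \dots, s_{i-1}$ respectively.
\item The floors of $P$ at heights $s_1, \dots, s_{i-1}$, which lie below $h$, precede $F$ in $P$.
\end{itemize}
Applying \Cref{prop:bounded-index}(1) together with the monotonicity of first-cover indices, we aim to show $j \le i$. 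Combined with $\fh_j(B) = h \le s_i \le \fh_i(B)$, this forces $h = s_i$ (with $j = i$), as desired. I expect the main obstacle to be exactly this index comparison: we must relate the index of $G$ in $B$ to the number of $\c$-heights below $h$, and the boundary constraint is needed to rule out $P$ having extra floors that push first-cover indices. If the direct argument fails, I would instead argue by contradiction, as in \Cref{lem:psi}: suppose $s_{i-1} < h < s_i$, and use \eqref{eq:boundary-constrain} on the floor of $P$ at height $s_i$ to derive a crossing with $G$.

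\emph{Second claim.} Suppose the part $c_i$ with $i < k$ is not counted by $\return$; that is, the floor $F_i$ of $P$ at height $s_i$ shares no step with $B$. We show the same holds for the floor $F_{i+1}$ at height $s_{i+1}$, exactly as in the proof of \Cref{lem:phi}. Let $j$ be the first-cover index of $F_i$. Then $s_i < \fh_j(B)$. By \Cref{prop:bounded-index}(2), the first-cover index of $F_{i+1}$ is some $j' > j$.
\begin{itemize}
\item If $F_{i+1}$ shared a step with $B$, then $s_{i+1} = \fh_{j'}(B)$, so the $B$-height $\fh_{j'}(B)$ would coincide with a $\c$-height.
\item By the first claim applied between $F_i$ and $F_{i+1}$, no return floor of $P$ lies strictly between them.
\item The boundary constraint between $F_i$ and the floors of $B$ of indices in $[j, j']$ should then give a contradiction, in the same way $F'$ and $G$ clashed in the proof of \Cref{lem:phi2}.
\end{itemize}
Hence non-counted parts propagate upward, so the doubly counted parts form an initial consecutive segment.
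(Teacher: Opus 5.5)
\paragraph{First claim.} You have the right ingredients, but the key inequality points the wrong way. You aim to show $j \le i$, yet that forces nothing: with $j < i$ one can have $\fh_j(B) = h < s_i \le \fh_i(B)$. What you need, and what your own bullets already give, is $j \ge i$. The floors of $P$ at heights $s_1 < \dots < s_{i-1} < h$ all precede $F$, so $F$ has index at least $i$ in $P$, and \Cref{prop:bounded-index}(1) yields $j \ge i$. Then $h = \fh_j(B) \ge \fh_i(B) \ge s_i \ge h$ forces $h = s_i$. Extra floors of $P$ only raise the index of $F$, and hence $j$, so the ``obstacle'' you anticipate does not exist. Corrected this way, your argument is essentially the paper's: if $\fh_j(B) > s_j$, then $s_1, \dots, s_j$ all lie below $F$, so $F$ has index greater than $j$, contradicting \Cref{prop:bounded-index}(1).

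\paragraph{Second claim.} Here there is a genuine gap. The only consequence of $\c \le_c \vc(B)$ you ever use is $s_j \le \fh_j(B)$, and you plan to finish with the boundary constraint plus the first claim. That cannot work. Take $B = NNENE$, so $\fh(B) = (1,3,4)$ and $\vc(B) = (1,2,1)$, and take $\c = (1,1,2)$. Then $\psum(\c) = (1,2,4) \le \fh(B)$ entrywise, although $\c \nleq_c \vc(B)$. The path $P = NENNE$ has $\fh(P) = (1,2,4)$ and lies in $\CP_B$ by \Cref{prop:bounded}. It satisfies the first claim, yet the parts counted by both $\run_\c(P) = 3$ and $\return(P)$ are only $c_1$ and $c_3$.

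So the proof must use the componentwise inequalities $c_q \le \vc(B)_q$. They make $\fh_q(B) - s_q$ nondecreasing in $q$, so once $\fh_\ell(B) > s_\ell$ the inequality stays strict for all $q > \ell$. With this, the paper argues as follows:
\begin{itemize}
\item As in the corrected first claim, $j \ge i$, and by \Cref{prop:bounded-index}(2), $j' \ge j+1 \ge i+1$.
\item If $j' \ge i+2$, then $\fh_{j'}(B) \ge s_{j'} > s_{i+1}$.
\item If $j' = i+1$, then $j = i$. Since $F_i$ is not a return, $\fh_i(B) > s_i$, and this strict gap propagates to $\fh_{i+1}(B) > s_{i+1}$.
\end{itemize}
In both cases $F_{i+1}$ lies strictly below its first-cover floor, so it is not a return. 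Your analogy with \Cref{lem:phi,lem:phi2} breaks exactly here: consecutive $\c$-heights may differ by more than $1$, and other floors of $P$ may sit between $F_i$ and $F_{i+1}$.
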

\begin{proof}
  For the first claim, let $F$ be the $i$-th floor of $P$ and $F_B$ the first-cover floor of $F$.
  By \Cref{prop:bounded-index}(1), the index $j$ of $F_B$ in $B$ satisfies $j \ge i$.
  Suppose that $F$ has height less than $\psum(\c)_{\run_\c(P)}$ and is counted only by $\return(P)$.
  As $\c \le_{c} \vc(B)$, we have $\fh_j(B) \ge \psum(\c)_{j}$.
  As $F$ is counted only by $\return(P)$, $\fh_j(B) > \psum(\c)_{j}$, hence at least $j$ floors of $P$ precede $F$, contradicting $F$ being the $i$-th floor of $P$ with $i \le j$.

  For the second claim, suppose $F$ is a floor of $P$ with height strictly less than $\psum(\c)_{\run_\c(P)}$ and counted only by $\run_\c(P)$.
  The height of $F$ is thus $\psum(\c)_{k}$ for some $k < \run_{\c}(P)$.
  By iteration, we only need to show that the floor $F'$ of $P$ of height $\psum(\c)_{k + 1}$, also counted by $\run_{\c}(P)$, is not counted by $\return(P)$.
  Let $i$ be the index of $F$ in $P$.
  Since $F$ is counted by $\run_\c(P)$, there are at least $k - 1$ floors before $F$ also counted by $\run_\c(P)$, hence $i \geq k$.
  Let $F_B$ (resp. $F'_B$) be the first-cover floor of $F$ (resp. $F'$), and $j$ (resp. $j'$) its index in $B$.
  By \Cref{prop:bounded-index}(1) on $F$, we have $j \geq i \geq k$.
  \Cref{prop:bounded-index}(1) on $F'$ leads to $j' \geq k + 1$ as $F'_B \neq F_B$ implies $j' > j$.
  If $j' > k + 1$, then as $\c \le_c \vc(B)$, the height of $F'_B$ is strictly greater than $\psum(\c)_{k + 1}$, meaning that $F'$ is not a return floor.
  If $j' = k + 1$, then $j \leq j' - 1 = k$.
  But $j \geq k$, thus $j = k$.
  Since $\c \leq_c \vc(B)$, $\fh_\ell(B) > \psum(\c)_{\ell}$ implies $\fh_q(B) > \psum(\c)_{q}$ for all $q > \ell$. 
  As the height $\fh_{k}(B)$ of $F$ is strictly greater than $\psum(\c)_{k}$, we have $\fh_{k + 1}(B) > \psum(\c)_{k + 1}$, meaning that $F'_B$ is strictly higher than $F'$, hence $F'$ is not a return floor.
  We thus conclude for both cases.
\end{proof}

For any boundary path $B$ and composition $\c \le_c \vc(B)$, define
\[
  \cCP_B^r(k, \ell) := \{P \in \CP_B : \run_\c(P) = k, \return(P) = \ell, \rr_\c(P) = r\}.
\]

\begin{construction} \label{constr:lifting-psi-c}
  For a boundary $B$, a composition $\c \le_c \vc(B)$, and positive integers $r, \ell, k$ with $r \leq \ell$ and $r < k$, we define a map $\psi_\c$ from $\cCP_B^r(k, \ell)$ to $\cCP_B^r(k - 1, \ell + 1)$ as follows.
  For $P \in \cCP_B^r(k, \ell)$, its \tdef{bottom $\c$-run floor} is the floor at height $\psum(\c)_{\run_\c(P)}$, whose index in $P$ is denoted by $\bruf_\c(P)$.
  The \tdef{top block-lifted floor} of $P$ is the highest floor below height $\psum(\c)_{\run_\c(P) + 1}$, whose index in $P$ is denoted by $\truf_{\c}(P)$.
  Let $\bbruf_\c(P)$ denote the index of the first-cover floor of the bottom $\c$-run floor of $P$.
  We then do the following:
  \begin{enumerate}
  \item \textbf{Block lifting}: 
    Lift the floors of height between $\psum(\c)_{\run_\c(P)}$ and $\psum(\c)_{\run_\c(P) + 1}$  by 
    \[ 
    \min\left\{\fh_{\bbruf_\c(P)}(B) - \fh_{\bruf_\c(P)}(P), \psum(\c)_{\run_\c(P) + 1} - \fh_{\truf_\c(P)}(P)\right\}.
    \] 
    If a lifted floor hits $B$ on an $E$-step, terminate; otherwise, go to Ordinary lifting.
  \item \textbf{Ordinary lifting}:   
    \begin{enumerate}
    \item Set the lifted top block-lifted floor  as the current floor;
    \item Lift the current floor up until hitting $B$ at an $E$-step or up to one unit below the next;
    \item If the lifted floor hits $B$ at an $E$-step, terminate; otherwise, set the next floor as the current floor and go to the previous step.
    \end{enumerate}
  \end{enumerate}
  We set $\psi_\c(P)$ to be the path thus obtained after the procedure.
  We give two examples of the Block lifting step. See \Cref{fig:psic1} for the case $\fh_{\bbruf_\c(P)}(B) - \fh_{\bruf_\c(P)}(P) \le \psum(\c)_{\run_\c(P) + 1} - \fh_{\truf_\c(P)}(P)$, and \Cref{fig:psic2} for the other case.
\end{construction}

\begin{figure}
  \centering
  \begin{tikzpicture}[
    scale=0.45,
    >=stealth,
    grid/.style={gray!60, thin},
    boundary/.style={},
    pathA/.style={blue!60, very thick},
    pathB/.style={very thick},
    highlight/.style={red, dotted, very thick}
    ]

    \begin{scope}
      \draw[grid] (0,0) grid (10,15);
      \draw[boundary] (0,0) rectangle (10,15);
      \draw[pathA]
      (0,0) -- (0,2) -- (1,2) -- (1,3) -- (2,3)
      -- (2,5) -- (4,5) -- (4,7) -- (6,7) -- (6,11) 
      -- (7,11) -- (8,11) -- (8,13) -- (9,13) -- (9,15) -- (10,15);

      \draw[highlight] (5,5) -- (5,7) -- (7,7) -- (7,8) -- (9,8)--(9,6);

      \draw[pathB]
      (-1,0)--(0,0) -- (0,1) -- (1,1) -- (1,2) -- (3,2) -- (3,4) -- (5,4) -- (5,5) -- (7,5) -- (7,6)
      -- (9,6) -- (9,10) -- (10,10) -- (10,15);
      \draw[red,very thick] (5,5)--(7,5)--(7,6)--(9,6);
      \draw[green,very thick, dotted] (-0.5,0)--(0,0); 
      \draw[green,very thick, dotted] (-0.5,1)--(0,1);
      \draw[green,very thick, dotted] (-0.5,2)--(0,2);
      \draw[green,very thick, dotted] (-0.5,4)--(0,4);
      \draw[green,very thick, dotted] (-0.5,5)--(0,5);
      \draw[green,very thick, dotted] (-0.5,9)--(0,9);
      \draw[green,very thick, dotted] (-0.5,11)--(0,11);
      \draw[green,very thick, dotted] (-0.5,13)--(0,13);
      \draw[green,very thick, dotted] (-0.5,15)--(0,15);
    \end{scope}

    \begin{scope}[xshift=12.5cm, yshift=8cm]
      \draw[->, thick] (0,0) -- (1.2,0) node[midway,above] {\footnotesize Block lifting};
      \draw[<-, thick] (0,-0.4) -- (1.2,-0.4) node[midway,below] {};
    \end{scope}

    \begin{scope}[xshift=16cm]
      \draw[grid] (0,0) grid (10,15);
      \draw[boundary] (0,0) rectangle (10,15);
      \draw[pathA]
      (0,0) -- (0,2) -- (1,2) -- (1,3) -- (2,3)
      -- (2,5) -- (4,5) -- (4,7) -- (6,7) -- (6,11) 
      -- (7,11) -- (8,11) -- (8,13) -- (9,13) -- (9,15) -- (10,15);

      \draw[highlight] (5,5)--(7,5)--(7,6)--(9,6) -- (9,8);

      \draw[pathB]
      (-1,0)--(0,0) -- (0,1) -- (1,1) -- (1,2) -- (3,2) -- (3,4) -- (5,4) -- (5,7) -- (7,7) -- (7,8)
      -- (9,8) -- (9,10) -- (10,10) -- (10,15);
      \draw[red,very thick] (5,7) -- (7,7) -- (7,8)
      -- (9,8);
      \draw[green,very thick, dotted] (-0.5,0)--(0,0); 
      \draw[green,very thick, dotted] (-0.5,1)--(0,1);
      \draw[green,very thick, dotted] (-0.5,2)--(0,2);
      \draw[green,very thick, dotted] (-0.5,4)--(0,4);
      \draw[green,very thick, dotted] (-0.5,5)--(0,5);
      \draw[green,very thick, dotted] (-0.5,9)--(0,9);
      \draw[green,very thick, dotted] (-0.5,11)--(0,11);
      \draw[green,very thick, dotted] (-0.5,13)--(0,13);
      \draw[green,very thick, dotted] (-0.5,15)--(0,15);
    \end{scope}

  \end{tikzpicture}
  \caption{The first example of the block lifting of the bijection $\psi_{\c}$ with $\c = (1, 1, 1, 2, 1, 4, 2, 2, 2)$, in which the bottom $\c$-run floor touches the boundary $B$ after lifting.}
  \label{fig:psic1}
\end{figure}

\begin{figure}
  \centering
  \begin{tikzpicture}[
    scale=0.45,
    >=stealth,
    grid/.style={gray!60, thin},
    boundary/.style={},
    pathA/.style={blue!60, very thick},
    pathB/.style={very thick},
    highlight/.style={red, dotted, very thick}
    ]

    \begin{scope}
      \draw[grid] (0,0) grid (10,15);
      \draw[boundary] (0,0) rectangle (10,15);
      \draw[pathA]
      (0,0) -- (0,2) -- (1,2) -- (1,3) -- (2,3)
      -- (2,5) -- (4,5) -- (4,7) -- (6,7) -- (6,11) 
      -- (7,11) -- (8,11) -- (8,13) -- (9,13) -- (9,15) -- (10,15);

      \draw[highlight] (5,5) -- (5,6) -- (7,6) -- (7,8) -- (8,8)--(8,9) -- (9, 9) -- (9, 8);

      \draw[pathB]
      (-1,0)--(0,0) -- (0,1) -- (1,1) -- (1,2) -- (3,2) -- (3,4) -- (5,4) -- (5,5) -- (7,5) -- (7,7)
      -- (8,7) -- (8,8) -- (9,8) -- (9,12) -- (10, 12) -- (10,15);
      \draw[red,very thick] (5,5)--(7,5)--(7,7)--(8,7) -- (8,8) -- (9,8);
      \draw[green,very thick, dotted] (-0.5,0)--(0,0); 
      \draw[green,very thick, dotted] (-0.5,1)--(0,1);
      \draw[green,very thick, dotted] (-0.5,2)--(0,2);
      \draw[green,very thick, dotted] (-0.5,4)--(0,4);
      \draw[green,very thick, dotted] (-0.5,5)--(0,5);
      \draw[green,very thick, dotted] (-0.5,9)--(0,9);
      \draw[green,very thick, dotted] (-0.5,11)--(0,11);
      \draw[green,very thick, dotted] (-0.5,13)--(0,13);
      \draw[green,very thick, dotted] (-0.5,15)--(0,15);
    \end{scope}

    \begin{scope}[xshift=12.5cm, yshift=8cm]
      \draw[->, thick] (0,0) -- (1.2,0) node[midway,above] {\footnotesize Block lifting};
      \draw[<-, thick] (0,-0.4) -- (1.2,-0.4) node[midway,below] {};
    \end{scope}

    \begin{scope}[xshift=16cm]
      \draw[grid] (0,0) grid (10,15);
      \draw[boundary] (0,0) rectangle (10,15);
      \draw[pathA]
      (0,0) -- (0,2) -- (1,2) -- (1,3) -- (2,3)
      -- (2,5) -- (4,5) -- (4,7) -- (6,7) -- (6,11) 
      -- (7,11) -- (8,11) -- (8,13) -- (9,13) -- (9,15) -- (10,15);

      \draw[highlight] (5,5)--(7,5)--(7,7)--(8,7) -- (8,8) -- (9,8) -- (9,9);

      \draw[pathB]
      (-1,0)--(0,0) -- (0,1) -- (1,1) -- (1,2) -- (3,2) -- (3,4) -- (5,4) -- (5,6) -- (7,6) -- (7,8)
      -- (8,8) -- (8,9) -- (9,9) -- (9,12) -- (10, 12) -- (10,15);
      \draw[red,very thick] (5,6) -- (7,6) -- (7,8)
      -- (8,8) -- (8,9) -- (9,9);
      \draw[green,very thick, dotted] (-0.5,0)--(0,0); 
      \draw[green,very thick, dotted] (-0.5,1)--(0,1);
      \draw[green,very thick, dotted] (-0.5,2)--(0,2);
      \draw[green,very thick, dotted] (-0.5,4)--(0,4);
      \draw[green,very thick, dotted] (-0.5,5)--(0,5);
      \draw[green,very thick, dotted] (-0.5,9)--(0,9);
      \draw[green,very thick, dotted] (-0.5,11)--(0,11);
      \draw[green,very thick, dotted] (-0.5,13)--(0,13);
      \draw[green,very thick, dotted] (-0.5,15)--(0,15);
    \end{scope}

  \end{tikzpicture}
  \caption{The second example of the block lifting of the bijection $\psi_{\c}$ with $\c = (1, 1, 1, 2, 1, 4, 2, 2, 2)$, in which the top block-lifted floor gets to height $\psum(\c)_{\run_{\c}(P) + 1}$.}
  \label{fig:psic2}
\end{figure}

\begin{lemma}\label{lem:psi-c-defined}
  For a boundary $B$, a composition $\c\le_c\vc(B)$, and positive integers $r, \ell, k$ such that $r \leq \ell$ and $r < k$, the map $\psi_\c$ is always well-defined on $\cCP_B^r(k, \ell)$, and sends it to $\cCP_B^r(k - 1, \ell + 1)$, while preserving composition-type.
\end{lemma}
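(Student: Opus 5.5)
I would follow the template of the proof of \Cref{lem:psi-defined}, replacing $\fh(B)$ by $\psum(\c)$ wherever $\brun$ is replaced by $\run_\c$. The boundary is controlled by the first-cover floor and \Cref{lem:comp}.

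\textbf{Well-definedness.} Let $P \in \cCP_B^r(k,\ell)$ and write $s = \run_\c(P) = k$.

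First I will show that $s < m$, so that $\psum(\c)_{s+1}$ and the top block-lifted floor exist. If $s = m$, then the last floor of $P$ has height at least $\psum(\c)_m$, which is the total height of $B$ since $\c$ and $\vc(B)$ have the same size. Combined with \Cref{lem:comp}, this should force every part of $\c$ to be counted by both statistics, so that $r = k$, contradicting $r<k$.

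Next I check that the lifting amount $\delta$ is positive.
\begin{itemize}
\item Since $r<k$, \Cref{lem:comp} shows that the bottom $\c$-run floor is not a return floor. Its first-cover floor therefore lies strictly above it, so $\fh_{\bbruf_\c(P)}(B) - \fh_{\bruf_\c(P)}(P) > 0$.
\item By maximality of $\run_\c$, no floor sits at height $\psum(\c)_{s+1}$, so the second term of the minimum is also positive.
\end{itemize}

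Then I check that block lifting keeps the path under $B$. The block moves rigidly and stays below $\psum(\c)_{s+1}$, so it stays below any later floor of $P$. The main point is that no lifted floor crosses $B$. For the bottom $\c$-run floor this is exactly the first term of the minimum. For the later floors of the block, I would use \Cref{prop:bounded-index}(2): each such floor has a first-cover floor at least as high, and heights inside the block increase by at least one per floor.

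I expect the comparison between the lifting amount and the vertical gaps of $B$ to be the \textbf{main obstacle}. I would try to prove $\fh_{j'}(B) - \fh_{j}(B) \ge (\text{height gap in }P)$ using $\c\le_c\vc(B)$ together with the index shift $j'\ge j+(i'-i)$. This also needs care: the proof of \Cref{lem:comp} suggests that once $B$ is strictly above $\psum(\c)$ at some index, it stays strictly above at all later indices. Boundary constraint is preserved automatically because $\comp$ is unchanged. Ordinary lifting is identical to \Cref{constr:lifting}, so its validity transfers verbatim.

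\textbf{Statistics.} For $\run_\c$: after block lifting, the bottom $\c$-run floor has left height $\psum(\c)_s$. No floor can be moved into that height, because all lifted floors move up and lie above it, and the floors below are untouched. Also no floor reaches $\psum(\c)_{s+1}$ from below unless $\delta$ equals the second term. Even then, height $\psum(\c)_s$ is missing, so $\run_\c$ becomes exactly $s-1$.

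For $\return$: in the first case of the minimum, the bottom $\c$-run floor touches $B$ and becomes a new return. In the second case, I would argue as in the proof of \Cref{lem:phi2} that the last floor lifted during ordinary lifting genuinely changes height and hits $B$. This uses the boundary-constraint argument with the floor of $B$ preceding the one hit. I also need that no existing return is destroyed. Floors that were returns lie on $B$ and cannot be lifted, since lifting stops at $B$. Block floors above the bottom $\c$-run floor that were returns cannot exist, because the lift amount would push them through $B$. Hence $\return$ increases by exactly $1$.

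Finally, $\rr_\c$ is unchanged. By \Cref{lem:comp}, the doubly counted parts are the initial $r<k$ parts, and all floors at those heights lie strictly below $\psum(\c)_s$, so they are untouched. The composition type is preserved since only heights change.
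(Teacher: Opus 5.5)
\textbf{There is a genuine gap, and it sits exactly at the step you flag as the main obstacle. The route you propose for that step fails.}

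You want to keep each later block floor $F'$ (index $i'$ in $P$, first-cover index $j'$) under $B$ through the relative inequality $\fh_{j'}(B)-\fh_{j}(B)\ge \fh_{i'}(P)-\fh_{i}(P)$, with $j=\bbruf_\c(P)$. This inequality is false in general. Here is a counterexample.
\begin{itemize}
\item Take $\vc(B)=(1,2,5,1,1)$ and $\c=(1,1,5,1,1,1)$. Then $\fh(B)=(1,3,8,9,10)$ and $\psum(\c)=(1,2,7,8,9,10)$.
\item Let $B$ have its floors over the $x$-ranges $[-1,0],[0,2],[2,3],[3,4],[4,5]$.
\item Let $P$ have floors at heights $1,2,5,10$ over $[-1,2],[2,3],[3,4],[4,5]$.
\end{itemize}
Then $P\in\CP_B$, with $\run_\c(P)=2$, $\return(P)=2$ and $\rr_\c(P)=1$. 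The bottom $\c$-run floor (height $2$) has a first-cover floor of index $3$ and height $8$. The top block-lifted floor (height $5$) has a first-cover floor of index $4$ and height $9$. So $9-8=1<3=5-2$. Block lifting is still harmless here, since it lifts by $\min\{6,2\}=2$, but the reason is one your relative comparison cannot detect.

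\textbf{The missing idea is the paper's case split, combined with an absolute comparison against $\psum(\c)_{s+1}$ rather than a relative one.} Write $s=\run_\c(P)$ and recall $\bbruf_\c(P)\ge\bruf_\c(P)\ge s$.
\begin{itemize}
\item Case $\bbruf_\c(P)>s$. Here $\c\le_c\vc(B)$ gives $\fh_{\bbruf_\c(P)}(B)\ge\psum(\c)_{s+1}$, so the minimum is the second term. Every lifted floor then ends at height at most $\psum(\c)_{s+1}$. Every floor of $B$ it could meet has index at least $\bbruf_\c(P)$, hence height at least $\psum(\c)_{s+1}$.
\item Case $\bbruf_\c(P)=s$. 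Here $\fh_s(B)>\psum(\c)_s$, and via $\c\le_c\vc(B)$ this propagates to $\fh_{s+1}(B)>\psum(\c)_{s+1}$. By \Cref{prop:bounded-index}(2), every non-bottom block floor has first-cover index at least $s+1$, so it ends strictly below $B$. In this case your relative inequality does hold, because the gap is less than $c_{s+1}\le\vc(B)_{s+1}$; but you need the case split to know when it applies.
\end{itemize}

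\textbf{This strictness is also what your return count lacks.} You assert that $\return$ increases by exactly one, but two things are never shown:
\begin{itemize}
\item when the first term of the minimum is attained, only the bottom $\c$-run floor touches $B$;
\item when the second term is strictly smaller, no block floor touches $B$, so ordinary lifting is actually entered.
\end{itemize}

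The rest of your outline is fine: the existence of $\psum(\c)_{s+1}$, the positivity of both terms, and the $\run_\c$ and $\rr_\c$ bookkeeping. Your argument that $s$ is smaller than the length of $\c$ even fills in a point the paper leaves implicit.
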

\begin{proof}
  We first show that $\psi_\c$ is well-defined.
  Let $P \in \cCP_B^r(k, \ell)$ and $F$ be the bottom $\c$-run floor of $P$.
  As $r < k$, by Lemma~\ref{lem:comp}, $F$ is counted only by $\run_\c(P)$, and the $\bbruf_\c(P)$-th floor of $B$, denoted by $F_B$, exists.
  By \Cref{prop:bounded-index}(1), we have $\bbruf_\c(P) \ge \bruf_\c(P)$, and the definition of composition run leads to $\bruf_\c(P) \ge \run_\c(P)$.
  There are two cases for the comparison between $\bbruf_{\c}(P)$ and $\run_{\c}(P)$.
  \begin{itemize}
  \item $\bbruf_\c(P) > \run_\c(P)$.
    As $\c \le_c \vc(B)$, the height of $F_B$ is at least $\psum(\c)_{\run_\c(P) + 1}$, hence $\fh_{\bbruf_\c(P)}(B) - \fh_{\bruf_\c(P)}(P) \ge \psum(\c)_{\run_\c(P) + 1} - \fh_{\truf_\c(P)}(P)$.
    The floors of heights between $\psum(\c)_{\run_\c(P)}$ and $\psum(\c)_{\run_\c(P) + 1}$ in $P$ are thus lifted by $\psum(\c)_{\run_\c(P) + 1} - \fh_{\truf_\c(P)}(P)$ in the block lifting.
    All these floors are lifted to height at most $\psum(\c)_{\run_\c(P) + 1}$, and of index at least $\run_{\c}(P)$.
    Then, for each of these floors, as $P \in \CP_B$, by \Cref{prop:bounded-index}(1), the possible floor of $B$ that it may share an $E$-step with is of index at least $\bbruf_{\c}(P)$, hence at least of height $\psum(\c)_{\run_\c(P) + 1}$.
    The path thus remains bounded by $B$.
    Thus, we either stop if the top block-lifted floor becomes a return floor, or proceed to the ordinary lifting, which always terminates, and the result remains in $\CP_B$.
    The former case happens if and only if $F_B$ is of height $\psum(\c)_{\run_{\c}(P) + 1}$ and $\truf_{\c}(P) = \bruf_{\c}(P)$, as otherwise \Cref{prop:bounded-index}(1) implies that the lifted top block-lifted floor hits a floor of $B$ strictly higher than $F_B$.
    Hence, the ordinary lifting is skipped if and only if $\fh_{\bbruf_\c(P)}(B) - \fh_{\bruf_\c(P)}(P) = \psum(\c)_{\run_\c(P) + 1} - \fh_{\truf_\c(P)}(P)$.
    We note that, by the definition of $\truf_{\c}(P)$, we have $\psum(\c)_{\run_\c(P) + 1} - \fh_{\truf_\c(P)}(P) > 0$, thus the resulting path has one less $\c$-run floor, and we have a new return floor either from the early termination or from the ordinary lifting as in \Cref{lem:phi2}.
  \item $\bbruf_\c(P) = \bruf_\c(P) = \run_\c(P)$. Again, there are two cases for the height of block lifting.
    \begin{itemize}
    \item $\fh_{\bbruf_\c(P)}(B) - \fh_{\bruf_\c(P)}(P) > \psum(\c)_{\run_\c(P) + 1} - \fh_{\truf_\c(P)}(P)$.
      In this case, the floors of heights between $\psum(\c)_{\run_c(P)}$ and $\psum(\c)_{\run_c(P) + 1}$ are lifted by $\psum(\c)_{\run_\c(P) + 1} - \fh_{\truf_\c(P)}(P)$, thus to height at most $\psum(\c)_{\run_\c(P) + 1}$.
      For the bottom $\c$-run floor, as $\fh_{\bbruf_\c(P)}(B) - \fh_{\bruf_\c(P)}(P) > \psum(\c)_{\run_\c(P) + 1} - \fh_{\truf_\c(P)}(P)$, it does not touch $F_B$ after lifting.
      For any other lifted floor $F'$, its index in $P$ is at least $\run_{\c}(P) + 1$.
      As $P \in \CP_B$, by \Cref{prop:bounded-index}, the index of the floor of $B$ that may come into contact with $F'$ is at least $\bbruf_{\c}(P) + 1$, and hence is of height strictly greater than $\psum(\c)_{\run_\c(P) + 1}$.
      This follows from $\bbruf_{\c}(P) = \run_{\c}(P)$, $\fh_{\bbruf_{\c}(P)}(B) > \psum(\c)_{\run_{\c}(P)}$, and $\c \leq_c \vc(B)$.
      Thus, $F'$ remains strictly lower than $B$ after lifting, meaning that $P$ after block lifting still remains bounded by $B$; so does ordinary lifting.
      Similar to the case $\bbruf_\c(P) > \run_\c(P)$, the procedure always terminates with one less $\c$-run floor and one more return floor, except that we always pass through the ordinary lifting.
    \item $\fh_{\bbruf_\c(P)}(B) - \fh_{\bruf_\c(P)}(P) \le \psum(\c)_{\run_\c(P) + 1} - \fh_{\truf_\c(P)}(P)$.
      In this case, the floors between $\psum(\c)_{\run_c(P)}$ and $\psum(\c)_{\run_c(P) + 1}$ are lifted by $\fh_{\bbruf_\c(P)}(B) - \fh_{\bruf_\c(P)}(P)$, making the bottom $\c$-run floor $F$ sharing an $E$-step with $F_B$ after block lifting.
      For any other lifted floor $F'$, by the same argument as the case above, the index of the first-cover floor $F'_B$ of $F'$ is of height strictly greater than $\psum(\c)_{\run_\c(P) + 1}$.
      However, the height of $F'$ after block lifting is at most
      \[
        \fh_{\truf_\c(P)}(P) + \fh_{\bbruf_\c(P)}(B) - \fh_{\bruf_\c(P)}(P) \leq \psum(\c)_{\run_\c(P) + 1},
      \]
      meaning that $F'$ remains strictly below $B$ after block lifting.
      Hence, $F$ remains bounded by $B$ after block lifting.
      The procedure clearly always terminates, and as $r < k$, the bottom $\c$-run floor is not a return floor, meaning that $\fh_{\bbruf_\c(P)}(B) - \fh_{\bruf_\c(P)}(P) > 0$.
      Thus, the block lifting makes one less $\c$-run floor, and the whole procedure makes exactly one more return floor, either the lifted bottom $\c$-run floor after the block lifting as others remains strictly below $B$, or by the termination of the ordinary lifting as in \Cref{lem:phi2}.
    \end{itemize}
  \end{itemize}
  We thus conclude that in all cases $\psi_\c(P)$ is well-defined and is in $\cCP_B^r(k - 1, \ell + 1)$. It is clear that $\psi_\c$ preserves the composition-type of $P$ as only floor heights are changed. 
\end{proof}

\begin{construction} \label{constr:descending-psi-c}
  For a boundary $B$, a composition $\c \le_c \vc(B)$, and positive integers $r, \ell, k$ with $r < \ell$ and $r \leq k$, we define a map $\overline{\psi_\c}$ from $\cCP_B^r(k, \ell)$ to $\cCP_B^r(k + 1, \ell - 1)$ as follows.
  For $P' \in \cCP_B^r(k, \ell)$, let $\trf_{\c}(P')$ be the index of the first floor of $P'$ higher than $\psum(\c)_{\run_\c(P')}$ that hits the boundary $B$ and let $\bref_{\c}(P')$ be the index of the first floor above the height $\psum(\c)_{\run_\c(P') + 1}$.
  We then do the following:
  \begin{enumerate}
  \item \textbf{Ordinary lowering}: 
    If $\fh_{\trf_\c(P')}(P') > \psum(\c)_{\run_\c(P') + 2}$, do the following steps: 
    \begin{enumerate}
    \item Set the current floor to be the $\trf_\c(P')$-th floor of $P'$;
    \item Lower the current floor down until reaching height $\psum(\c)_{\run_\c(P') + 2}$ or one unit above the preceding floor;
    \item If the current floor is not yet at height $\psum(\c)_{\run_\c(P') + 2}$, go to the previous step.
    \end{enumerate}
  \item \textbf{Block lowering}:
    Lower the floors of height between $\psum(\c)_{\run_\c(P') + 1}$ and $\psum(\c)_{\run_\c(P') + 2}$ by
    \[
      \min\left\{\fh_{\bref_\c(P')}(P'), \psum(\c)_{\run_\c(P') + 2} \right\} - \psum(\c)_{\run_\c(P') + 1}.
    \]
    Then terminate.  
  \end{enumerate}
  We set $\overline{\psi_\c}(P')$ to be the path thus obtained after the procedure.
\end{construction}

\begin{lemma}\label{lem:psi-c-inv-defined}
  For a boundary $B$, a composition $\c\le_c\vc(B)$, and positive integers $r, \ell, k$ such that $r \leq \ell$ and $r < k$, the map $\overline{\psi_\c}$ is always well-defined on $\cCP_B^r(k - 1, \ell + 1)$, and sends it to $\cCP_B^r(k, \ell)$, while preserving composition-type.
\end{lemma}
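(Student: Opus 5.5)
We follow the proof of \Cref{lem:psi-inv-defined}, replacing the boundary heights $\fh_{\brun(P')+1}(B)$ and $\fh_{\brun(P')+2}(B)$ by the partial sums $\psum(\c)_{\run_\c(P')+1}$ and $\psum(\c)_{\run_\c(P')+2}$. \Cref{lem:comp} plays the role of \Cref{lem:psi}, and \Cref{prop:bounded-index} controls which boundary floors can be touched. Throughout, let $P' \in \cCP_B^r(k-1,\ell+1)$ and write $k' = \run_\c(P') = k-1$.

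\textbf{Well-definedness.} Since $r < \ell+1$, \Cref{lem:comp} shows that some return floor of $P'$ lies above height $\psum(\c)_{k'}$, so $\trf_\c(P')$ exists. By \Cref{prop:bounded-index}(1), $\trf_\c(P')$ has index at least $k'+1$. Its first-cover floor therefore has index at least $k'+1$, and since $\c \le_c \vc(B)$ its height is at least $\psum(\c)_{k'+1}$. Because $P'$ has no floor at height $\psum(\c)_{k'+1}$, the part $c_{k'+2}$ should exist, and $\bref_\c(P')$ should exist as well. Here we need to exclude the degenerate case where $k'+1$ is the last index of $\c$, exactly as the last floor of $B$ was excluded in \Cref{lem:psi-inv-defined}. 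We will argue that in that case every part would be counted by both statistics, contradicting $r<\ell+1$. Termination of both lowering phases is then clear.

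\textbf{Codomain and composition type.} Let $F$ be the $\trf_\c(P')$-th floor and $F'$ the floor preceding it. The key claim is that lowering strictly decreases the return count by exactly one. For this we show that either $F$ is lowered by a positive amount, or $F$ and $F'$ differ in height by at least $2$. By minimality, $F'$ is not a return floor. Suppose $F'$ were at height $\fh(F)-1$ above $\psum(\c)_{k'+1}$. Then $F'$ would lie under the first-cover floor $G$ of $F$ without touching $G$, or it would violate the boundary constraint~\eqref{eq:boundary-constrain} with the floor of $B$ preceding $G$. This is the same argument as in \Cref{lem:phi2}. Next we split into the two cases $\fh_{\trf_\c(P')}(P') > \psum(\c)_{k'+2}$ (ordinary lowering occurs) and $\fh_{\trf_\c(P')}(P') \le \psum(\c)_{k'+2}$ (ordinary lowering is skipped). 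In the skipped case we show, via \Cref{prop:bounded-index}(2), that $F$ is the unique floor with height in $(\psum(\c)_{k'+1}, \psum(\c)_{k'+2}]$ that could touch $B$, so that $\bref_\c(P') = \trf_\c(P')$.

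In both cases, the block lowering brings the $\bref_\c(P')$-th floor exactly to height $\psum(\c)_{k'+1}$, so $\run_\c$ increases by exactly one. It also lowers no floor below $\psum(\c)_{k'+1}$, so the floors at heights $\psum(\c)_1,\dots,\psum(\c)_{k'}$ are untouched. Lowering floors only relaxes boundedness, so the resulting path is still bounded by $B$. Since $\comp$ is unchanged, the result remains in $\CP_B$ and the composition type is preserved.

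Finally, $\rr_\c$ is unchanged. This follows because the parts counted by both statistics are initial and consecutive (\Cref{lem:comp}), the bottom part of the run is untouched, and the newly created run part at height $\psum(\c)_{k'+1}$ is not a return. The last point holds because the floor sitting there has index at least $k'+1$, so its first-cover floor is strictly higher, using the strict inequality propagated along $\c \le_c \vc(B)$ as in the proof of \Cref{lem:comp}.

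\textbf{Main obstacle.} We expect the hardest step to be showing that the return count drops by exactly one. Specifically, we must check that no other lowered floor loses a return and that the lowered $\trf_\c$ floor itself ceases to be a return. The complication is that, unlike in \Cref{lem:psi-inv-defined}, the target height $\psum(\c)_{k'+1}$ need not be a boundary height. We will handle this by the case split on whether the index of the first-cover floor of $F$ equals $k'+1$ or exceeds it, mirroring the cases in \Cref{lem:psi-c-defined}.
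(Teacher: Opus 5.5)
\textbf{Verdict: genuine gaps in execution.} Your skeleton is the paper's: existence of $\trf_\c(P')$ via \Cref{lem:comp}, first-cover bounds via \Cref{prop:bounded-index}, the split on whether ordinary lowering is skipped, $\bref_\c(P')=\trf_\c(P')$ in the skipped case, and the gap-of-two argument for $F'$. But the step carrying the skipped case is mis-aimed, and it is tied to the check you defer. Write $k'=\run_\c(P')$, $h_F$ for the height of $F$, and $i_B$ for the index of its first-cover floor $F_B$.

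\textbf{The skipped case.} Suppose $h_F\le\psum(\c)_{k'+2}$. Knowing that $F$ is the only floor with height in $(\psum(\c)_{k'+1},\psum(\c)_{k'+2}]$ that touches $B$ does not give $\bref_\c(P')=\trf_\c(P')$. What you need is that no floor at all precedes $F$ above height $\psum(\c)_{k'+1}$. Read without the qualifier, your uniqueness claim is simply false, since floors after $F$ may lie in that range. The paper obtains the needed fact from $i_B$:
\begin{itemize}
\item If $i_B=k'+1$, then \Cref{prop:bounded-index}(1) forces $F$ to be the $(k'+1)$-st floor, directly after the last $\c$-run floor.
\item If $i_B=k'+2$, then $h_F=\fh_{k'+2}(B)=\psum(\c)_{k'+2}$. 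Since $\fh_i(B)-\psum(\c)_i$ is weakly increasing, this forces $\fh_{k'+1}(B)=\psum(\c)_{k'+1}$. So every floor above $\psum(\c)_{k'+1}$ has first-cover index at least $k'+2$, and \Cref{prop:bounded-index}(2) excludes such a floor before $F$.
\end{itemize}

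\textbf{The deferred return check.} The floors after $F$ in that range are exactly why ``no other return is lost'' needs an argument: block lowering moves them too. You must show they are not returns. This holds because their first-cover floors come after $F_B$ and lie strictly above $\psum(\c)_{k'+2}$, when $i_B=k'+1$. The strictness propagates from $\fh_{k'+1}(B)=h_F>\psum(\c)_{k'+1}$. When $i_B=k'+2$ there are no such floors. In the ordinary-lowering case the check is just minimality of $\trf_\c(P')$, since every moved floor other than $F$ precedes $F$.

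\textbf{The run count.} The claim that $\run_\c$ increases by exactly one also needs height $\psum(\c)_{k'+2}$ to be empty at the end. In the ordinary-lowering case, the ordinary lowering parks a floor exactly at that height. You must check that block lowering, whose shift is strictly positive and whose range includes $\psum(\c)_{k'+2}$, moves it off. You never address this.

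\textbf{Two smaller slips.}
\begin{itemize}
\item Your reason why the new floor at $\psum(\c)_{k'+1}$ is not a return (``strict inequality propagated as in \Cref{lem:comp}'') breaks down when $r=k'$, because no strict inequality is then available. The right reason is simpler: that floor was lowered by a positive amount from a position weakly below $B$, and lowering never creates a contact.
\item In the $F'$ argument you have swapped the roles. If $F'$ lies inside the $x$-range of $G$, it violates the boundary constraint with $G$ itself. If $F'$ reaches left of $G$, it lies under the floor preceding $G$, and at height $h_F-1$ it would touch that floor.
\end{itemize}
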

\begin{proof}
  We first show that $\overline{\psi_\c}$ is well-defined.
  Let $P' \in \cCP_B^r(k - 1, \ell + 1)$.
  As $k - 1 \leq r < \ell + 1$, by Lemma~\ref{lem:comp}, there is at least a return floor of $P'$ above the last $\c$-run floor of $P'$, thus the $\trf_\c(P')$-th floor of $P'$, denoted by $F$, exists, and is of index at least $\run_{\c}(P') + 1$.
  Let $F_B$ be the first-cover floor of $F$.
  By \Cref{prop:bounded-index}(1), the index of $F_B$ in $B$, denoted by $i_B$, is at least $\run_\c(P') + 1$.
  We have two cases.
  \begin{itemize}
  \item If $i_B > \run_\c(P') + 1$, then $|\c| \ge \run_\c(P') + 2$, and as $\c \leq_c \vc(B)$, we have $\fh_{i_B}(B) \geq \psum(\c)_{\run_\c(P') + 2}$.
  \item If $i_B = \run_\c(P') + 1$, then we have $\fh_{i_B}(B) \geq \psum(\c)_{\run_\c(P') + 1}$ as $\c \leq_c \vc(B)$.
    However, $F$ is only counted by $\return(P')$, we cannot take the equality, which implies $|\c| \ge \run_\c(P') + 2$.
  \end{itemize}
  In both cases, the height of $F$ is strictly greater than $\psum(\c)_{\run_\c(P') + 1}$, and $|\c| \ge \run_\c(P') + 2$.
  Thus, both the $\bref_\c(P')$-th floor of $P'$ and $\c_{\run_\c(P') + 2}$ exist, meaning that all steps in \Cref{constr:descending-psi-c} are well-defined.
  Since the algorithm in \Cref{constr:descending-psi-c} clearly terminates, $\overline{\psi_\c}$ is well-defined.

  Next we show that $\overline{\psi_\c}(P') \in \cCP_B^r(k, \ell)$.
  We distinguish two cases.
  \begin{itemize}
  \item $\fh_{\trf_{\c}(P')}(P') \leq \psum(\c)_{\run_\c(P') + 2}$.
    In this case, we go directly to the block lowering.
    Since $\c \leq_c \vc(B)$, we cannot have $i_B > \run_{\c}(P') + 2$, otherwise $\fh_{\trf_{\c}(P')}(P') = \fh_{i_B}(B) \geq \psum(\c)_{i_B} > \psum(\c)_{\run_\c(P') + 2}$, which is impossible.
    We thus have $i_B \leq \run_{\c}(P') + 2$.
    As the index of $F$ in $P'$ is at least $\run_{\c}(P') + 1$, by $P' \in \CP_B$ and \Cref{prop:bounded-index}(1), we have $i_B \geq \run_{\c}(P') + 1$.
    We now show that $\trf_{\c}(P') = \bref_{\c}(P')$ for the two cases for $i_B$.
    \begin{itemize}
    \item $i_B = \run_{\c}(P') + 1$.
      In this case, the floor of $P'$ preceding $F$ is the last $\c$-run floor and is of height $\psum(\c)_{\run_{\c}(P')}$, while by the analysis of the same case before, the height of $F$ is strictly greater than $\psum(\c)_{\run_{\c}(P') + 1}$, making $F$ the first floor to exceed this height.
      Hence, $\trf_{\c}(P') = \bref_{\c}(P')$.
    \item $i_B = \run_{\c}(P') + 2$.
      In this case, as $\c \le_c \vc(B)$, the height of $F$ and $F_B$ is at least $\psum(\c)_{\run_{\c}(P') + 2}$.
      Along with $\fh_{\trf_{\c}(P')}(P') \leq \psum(\c)_{\run_\c(P') + 2}$, we have the height of $F$ and $F_B$ is exactly $\psum(\c)_{\run_{\c}(P') + 2}$.
      As $\c \leq_c \vc(B)$ and $F_B$ is of index $i_B = \run_{\c}(P') + 2$, we have $\fh_{i}(B) = \psum(\c)_{i}$ for all $1 \leq i \leq i_B$.
      In particular, $\fh_{\run_{\c}(P') + 1}(B) = \psum(\c)_{\run_{\c}(P') + 1}$.
      Hence, the first floor of $P'$ of height at least $\psum(\c)_{\run_{\c}(P') + 1}$ cannot have any floor of $B$ of index strictly smaller than $i_B$ being above it.
      By \Cref{prop:bounded-index}(2), we have $\trf_{\c}(P') \le \bref_{\c}(P')$.
      We check that $F$ is of height $\psum(\c)_{\run_{\c}(P') + 2}$ strictly above $\psum(\c)_{\run_{\c}(P') + 1}$, meaning that $\trf_{\c}(P') \ge \bref_{\c}(P')$.
      Hence, $\trf_{\c}(P') = \bref_{\c}(P')$.
    \end{itemize}
    As $\trf_{\c}(P') = \bref_{\c}(P')$, the height of $F$ satisfies $\fh_{\bref_{\c}(P')}(P') = \fh_{\trf_{\c}(P')}(P') \leq \psum(\c)_{\run_\c(P') + 2}$, thus $F$ is the lowest floor lowered in block lowering.
    Moreover, as $\fh_{\trf_{\c}(P')}(P') \leq \psum(\c)_{\run_\c(P') + 2}$, $F$ is lowered by $\fh_{\trf_{\c}(P')}(P') - \psum(\c)_{\run_{\c}(P') + 1}$, reaching height $\psum(\c)_{\run_{\c}(P') + 1}$, making it a new $\c$-run floor.
    There is no other new $\c$-run floor, as all other lowered floors are lower than $\psum(\c)_{\run_\c(P') + 2}$.
    By the definition of $\bref_{\c}(P')$, the height change is strictly positive, hence $F$ ceases to be a return floor after block lowering.
    There is no other affected return floor, as by $\c \leq_c \vc(B)$, every floor of $B$ after $F_B$ is of height at least $\psum(\c)_{\run_{\c}(P') + 2}$.
  \item $\fh_{\trf_{\c}(P')}(P') > \psum(\c)_{\run_\c(P') + 2}$.
    In this case, we start with ordinary lowering, in which $F$ is either lowered to $\psum(\c)_{\run_\c(P') + 2}$, or one unit higher than the floor $F'$ of $P$ preceding it.
    In the former case, clearly the height change is strictly positive.
    In the latter case, by the minimality of $\trf_{\c}(P')$, $F'$ cannot be a return floor, meaning that it is at least $2$ units lower than $F$, and the height change is also strictly positive.
    Thus in both cases, $F$ ceases to be a return floor after lowering, and no other return floor is affected in ordinary lowering by the minimality of $\trf_{\c}(P')$.
    Now for the block lowering, let $G$ be the $\bref_{\c}(P')$-th floor of $P'$, and we consider two cases.
    \begin{itemize}
    \item $\fh_{\bref_{\c}(P')}(P') > \psum(\c)_{\run_{\c}(P') + 2}$.
      In this case, by the minimality of $\bref_{\c}(P')$, $G$ is the last floor moved in ordinary lowering, lowered to the height $\psum(\c)_{\run_{\c}(P') + 2}$ with a strictly positive height change.
      $G$ is then further lowered to height $\psum(\c)_{\run_{\c}(P') + 1}$ in block lowering, becoming a new $\c$-run floor.
      There is no other new $\c$-run floor, as $G$ occupies already the height $\psum(\c)_{\run_{\c}(P') + 2}$, while no floor above this height is lowered in the block lowering.
    \item $\fh_{\bref_{\c}(P')}(P') \leq \psum(\c)_{\run_{\c}(P') + 2}$.
      In this case, $G$ is not moved in ordinary lowering, and is then moved down by $\fh_{\bref_{\c}(P')}(P') - \psum(\c)_{\run_{\c}(P') + 1}$, which is strictly positive, and gets to height $\psum(\c)_{\run_{\c}(P') + 1}$, making it a new $\c$-run floor.
      Using the same reasoning as the previous case, there is no other new $\c$-run floor.
    \end{itemize}
  \end{itemize}
  As lowering does not change the fact of $P'$ being bounded by $B$, we see that in all cases, $\overline{\psi_{\c}}(P')$ is well-defined, and has exactly one more $\c$-run floor and one less return floor than $P'$, hence $\overline{\psi_{\c}}(P') \in \cCP^r_B(k, \ell)$ and with composition-type preserved as only floor heights are changed.
\end{proof}

\begin{lemma}\label{lem:comp2}
  For a boundary $B$, a composition $\c \le _c\vc(B)$, and positive integers $r, \ell, k$ such that $r \leq \ell$ and $r < k$, the map $\psi_\c$ is a composition-type-preserving bijection from $\cCP_B^r(k, \ell)$ to $\cCP_B^r(k - 1, \ell + 1)$ with $\overline{\psi_\c}$ its inverse.
\end{lemma}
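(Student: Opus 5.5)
By \Cref{lem:psi-c-defined,lem:psi-c-inv-defined}, $\psi_\c$ sends $\cCP_B^r(k,\ell)$ into $\cCP_B^r(k-1,\ell+1)$, $\overline{\psi_\c}$ goes the other way, and both preserve composition-type. It therefore remains to check $\overline{\psi_\c}\circ\psi_\c=\mathrm{id}$ and $\psi_\c\circ\overline{\psi_\c}=\mathrm{id}$. I will follow the proof of \Cref{lem:psi2}. Write $P'=\psi_\c(P)$, so that $\run_\c(P')=\run_\c(P)-1$. Then $\psum(\c)_{\run_\c(P')+1}=\psum(\c)_{\run_\c(P)}$ and $\psum(\c)_{\run_\c(P')+2}=\psum(\c)_{\run_\c(P)+1}$. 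The height window $[\psum(\c)_{\run_\c(P)},\psum(\c)_{\run_\c(P)+1}]$ used by block lifting is thus exactly the window used by block lowering. The ordinary phases then mirror each other as in \Cref{lem:phi2}. Ordinary lifting starts from the lifted top block-lifted floor, which sits at height $\psum(\c)_{\run_\c(P)+1}$ whenever that phase is entered. It stops at the first floor hitting $B$, and this floor is $\trf_\c(P')$, by minimality of returns above the $\c$-run. Ordinary lowering undoes these moves, walking back down until height $\psum(\c)_{\run_\c(P')+2}$ is reached.

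The core of the argument is matching the early-termination conditions and the block shift amounts. In the proof of \Cref{lem:psi-c-defined}, ordinary lifting is skipped exactly when the first entry of the min is at most the second, so that some lifted floor (the bottom $\c$-run floor) becomes a return. This splits into two sub-cases: $\bbruf_\c(P)>\run_\c(P)$ with equality in the min, or $\bbruf_\c(P)=\run_\c(P)$. I will show this happens if and only if $\fh_{\trf_\c(P')}(P')\le\psum(\c)_{\run_\c(P')+2}$, which is precisely the condition under which ordinary lowering is skipped. The forward direction is immediate, since the new return floor then lies in the window. The reverse direction uses the case analysis in the proof of \Cref{lem:psi-c-inv-defined}, which gives $\trf_\c(P')=\bref_\c(P')$ in that case.

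Next I check that the shift amounts agree. Let $\delta$ be the block-lifting amount. The bottom $\c$-run floor $F$ of $P$ moves to height $\psum(\c)_{\run_\c(P)}+\delta$, and it becomes the lowest floor of $P'$ strictly above $\psum(\c)_{\run_\c(P')+1}$, namely $\bref_\c(P')$. No floor of $P$ lay strictly between $\psum(\c)_{\run_\c(P)}$ and $F$, and $\delta>0$.

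If ordinary lifting did not move $F$, then $\fh_{\bref_\c(P')}(P')=\psum(\c)_{\run_\c(P)}+\delta\le\psum(\c)_{\run_\c(P)+1}$. The lowering amount $\min\{\fh_{\bref_\c(P')}(P'),\psum(\c)_{\run_\c(P')+2}\}-\psum(\c)_{\run_\c(P')+1}$ then equals $\delta$.

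If ordinary lifting moved $F$, then $F$ was the top block-lifted floor and was already at height $\psum(\c)_{\run_\c(P)+1}$. Ordinary lowering returns it there first, and the min evaluates to $\psum(\c)_{\run_\c(P)+1}$. This again gives $\delta$, because in this case $\delta=\psum(\c)_{\run_\c(P)+1}-\fh_{\truf_\c(P)}(P)$ with $\truf_\c(P)=\bruf_\c(P)$.

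The delicate point is the case where block lifting used the first term of the min while $\truf_\c(P)\neq\bruf_\c(P)$. There I need that the recovered $\delta$ matches the amount actually applied, rather than $\psum(\c)_{\run_\c(P)+1}-\fh_{\truf_\c(P)}(P)$. This holds because the lowering distance is read off from $F$ alone, and $F$ moved by exactly $\delta$.

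For the other composition, start from $P'$ and let $P=\overline{\psi_\c}(P')$. I will argue symmetrically: block lowering places $\bref_\c(P')$ at height $\psum(\c)_{\run_\c(P)}$, making it the bottom $\c$-run floor. The main obstacle is showing that $\psi_\c$ applied to $P$ picks the same shift. The min in block lifting must equal the lowering amount, and lifting must terminate early exactly when lowering skipped its ordinary phase. I expect to settle this with \Cref{prop:bounded-index} and $\c\le_c\vc(B)$. The key facts are that the first-cover floor of $F$ has height at least the lowering amount above $F$, and that no other floor in the window meets $B$ before $F$ does. These are the same bounds established in the proof of \Cref{lem:psi-c-defined}. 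With them, the minimum in block lifting is attained consistently with the reverse shift, which concludes the proof of bijectivity.
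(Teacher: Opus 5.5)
Your reduction to the inverse property, the matching of the two height windows, and your check of $\overline{\psi_{\c}}\circ\psi_{\c}=\mathrm{id}$ follow the paper's proof and are sound. Following the former bottom $\c$-run floor $F$ as the $\bref_{\c}(P')$-th floor is a clean way to see that the shift is recovered. There is one minor slip: when $\bbruf_{\c}(P)=\run_{\c}(P)$, ordinary lifting is skipped only if the first entry of the min is at most the second, not always.

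The gap is the other composition $\psi_{\c}\circ\overline{\psi_{\c}}=\mathrm{id}$, which is needed for surjectivity. You only announce it, and the facts you name would not close it. Write $\delta'$ for the block-lowering amount and $P=\overline{\psi_{\c}}(P')$. Knowing only that the first-cover floor of $F$ lies \emph{at least} $\delta'$ above $F$ in $P$ does not decide whether block lifting on $P$ terminates early. If it terminated early in a case where ordinary lowering had been performed, $\psi_{\c}(P)$ would not be $P'$.

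What is missing is a case split on whether ordinary lowering was skipped.

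If it was skipped, \Cref{lem:psi-c-inv-defined} gives $\trf_{\c}(P')=\bref_{\c}(P')$. So $F$ was a return floor of $P'$ at height $\psum(\c)_{\run_{\c}(P')+1}+\delta'$, and the first entry of the min is exactly $\delta'$. The second entry is at least $\delta'$, because the top block-lifted floor of $P$ is a lowered window floor of $P'$, hence at height at most $\psum(\c)_{\run_{\c}(P)+1}-\delta'$. So lifting shifts by $\delta'$ and stops with $F$ back on $B$; this is the computation the paper makes.

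If ordinary lowering was performed, it leaves a floor exactly at height $\psum(\c)_{\run_{\c}(P')+2}$. So the top block-lifted floor of $P$ sits at $\psum(\c)_{\run_{\c}(P)+1}-\delta'$, and the second entry equals $\delta'$. The first entry is then \emph{strictly} larger, for one of two reasons:
\begin{itemize}
\item $F$ precedes the $\trf_{\c}(P')$-th floor. Then $F$ is not a return of $P'$, so its first-cover floor lies strictly above its height in $P'$, which is at least $\psum(\c)_{\run_{\c}(P')+1}+\delta'$.
\item $F$ is the $\trf_{\c}(P')$-th floor itself. Then it lies strictly above $\psum(\c)_{\run_{\c}(P')+2}$.
\end{itemize}

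The same minimality of $\trf_{\c}(P')$ shows two further facts. No window floor touches $B$ after block lifting. Ordinary lifting raises each floor moved by ordinary lowering back to its height in $P'$, touching $B$ only at the $\trf_{\c}(P')$-th floor, where it stops.

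This strictness comes from the minimality of $\trf_{\c}(P')$, not from \Cref{prop:bounded-index} or the bounds in the proof of \Cref{lem:psi-c-defined}. It is the ingredient your sketch lacks.
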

\begin{proof}
  By \Cref{lem:psi-c-defined,lem:psi-c-inv-defined}, we only need to show that $\psi_\c$ and $\overline{\psi_\c}$ are inverse of each other.
  Similar to the proof of \Cref{lem:psi2}, we let $P$ (resp. $P'$) be the bounded paths appearing in \Cref{constr:lifting-psi-c} (resp. \Cref{constr:descending-psi-c}).
  We assume implicitly that they are linked either by $\psi_\c$ or $\overline{\psi_\c}$ in the following, depending on the direction of bijections we are dealing with.
  It is clear that the ordinary lifting is the inverse of the ordinary lowering, because when they are executed, they concern the starting point of the former is the floor of the same height $\psum(\c)_{\run_\c(P) + 1} = \psum(\c)_{\run_\c(P') + 2}$ as the ending point of the latter, and the arguments in the proof of \Cref{lem:phi2} apply.
  Hence, we only need to show that the block lifting in \Cref{constr:lifting-psi-c} is the inverse of the block lowering in \Cref{constr:descending-psi-c}.

  We first check the early termination conditions. 
  In the proof of \Cref{lem:psi-c-defined}, we see that the ordinary lifting is skipped if and only if $\fh_{\bbruf_\c(P)}(B) - \fh_{\bruf_\c(P)}(P) \le \psum(\c)_{\run_\c(P) + 1} - \fh_{\truf_\c(P)}(P)$, which implies $\fh_{\bbruf_\c(P)}(B) \le \psum(\c)_{\run_\c(P) + 1} = \psum(\c)_{\run_\c(P') + 2}$ by the definitions of $\bruf_{\c}(P)$ and $\truf_{\c}(P)$.
  Moreover, the $\bruf_\c(P)$-th floor of $P$ is lifted to height $\fh_{\bbruf_\c(P)}(B)$, and the floor of this height in $P'$ is clearly the $\trf_\c(P')$-th floor of $P'$.
  Therefore, $\fh_{\trf_\c(P')}(P') \le \psum(\c)_{\run_\c(P') + 2}$. This is the only case in which the ordinary lowering step for $P'$ in \Cref{constr:descending-psi-c} is skipped.

  Conversely, if for $P'$ we have $\fh_{\trf_\c(P')}(P') \le \psum(\c)_{\run_\c(P') + 2}$, the proof of \Cref{lem:psi-c-inv-defined} gives $\trf_\c(P') = \bref_\c(P')$.
  Floors with height between $\psum(\c)_{\run_\c(P') + 1}$ and $\psum(\c)_{\run_\c(P') + 2}$ are thus lowered by $\fh_{\bref_\c(P')}(P') - \psum(\c)_{\run_\c(P') + 1}$.
  The resulting top lifted floor is thus of height at most $\psum(\c)_{\run_\c(P') + 2} - \fh_{\bref_\c(P')}(P') + \psum(\c)_{\run_\c(P') + 1}$.
  By the definition of $\truf_{\c}(P)$, we have $\fh_{\bref_\c(P')}(P') - \psum(\c)_{\run_\c(P') + 1} \le \psum(\c)_{\run_\c(P') + 2} - \fh_{\truf_\c(P)}(P)$.
  As $\fh_{\bref_\c(P')}(P') = \fh_{\bbruf_\c(P)}(B)$, $\psum(\c)_{\run_\c(P') + 1} = \fh_{\bruf_\c(P)}(P)$ and $\run_{\c}(P) = \run_{\c}(P') + 1$, we have $\fh_{\bbruf_\c(P)}(B) - \fh_{\bruf_\c(P)}(P) \le \psum(\c)_{\run_\c(P) + 1} - \fh_{\truf_\c(P)}(P)$, which is the only case in which the ordinary lifting step for $P$ in \Cref{constr:lifting-psi-c} is skipped.
  Hence, the ordinary lifting in \Cref{constr:lifting-psi-c} for $P$ is skipped exactly when the ordinary lowering in \Cref{constr:descending-psi-c} for $P'$ is skipped.

  Since $\run_\c(P) = \run_\c(P') + 1$, both the block lifting and the block lowering do not move floors in the related height range outside the same range.
  We only need to show that the distances of moving in both directions are the same. 
  For the case that the ordinary lifting (resp. lowering) is skipped, this distance in \Cref{constr:lifting-psi-c} (resp. \Cref{constr:descending-psi-c}) is $\fh_{\bbruf_\c(P)}(B) - \fh_{\bruf_\c(P)}(P)$ (resp.~$\fh_{\bref_\c(P')}(P') - \sum_{p = 1}^{\run_\c(P') + 1} \c_p$).
  Since in this case $\fh_{\bref_\c(P')}(P') = \fh_{\bbruf_\c(P)}(B)$ and $\sum_{p = 1}^{\run_\c(P') + 1} \c_p = \fh_{\bruf_\c(P)}(P)$, the moving distance for block lifting and block lowering is the same for the corresponding paths $P$ and $P'$ in both directions.
  When the ordinary lifting is not skipped, the moving distance in \Cref{constr:lifting-psi-c} is $\psum(\c)_{\run_{\c}(P) + 1} - \fh_{\truf_{\c}(P)}(P)$.
  When the ordinary lowering is not skipped, we have $\fh_{\trf_{\c}(P')}(P') > \psum(\c)_{\run_{\c}(P') + 2}$, and there are two cases.
  \begin{itemize}
  \item $\fh_{\bref_{\c}(P')} \geq \psum(\c)_{\run_{\c}(P') + 2}$.
    In this case, the moving distance is $\c_{\run_{\c}(P') + 2}$.
    Moreover, in the ordinary lowering, the $\bref_{\c}(P')$-th floor of $P'$ is lowered to height $\psum(\c)_{\run_{\c}(P') + 2}$, thus further lowered to $\psum(\c)_{\run_{\c}(P') + 1}$ after block lowering.
    Hence, there is no other floor moved in block lowering, and the $\bref_{\c}(P')$-th floor of $P'$ becomes the top block-lifted floor of $P$, and by $\run_{\c}(P) = \run_{\c}(P') + 1$, we have the same moving distances.
    Conversely, if the moving distance of the ordinary lifting is also $\c_{\run_{\c}(P') + 2}$, then the top block-lifted floor of $P$ is at the height $\psum(\c)_{\run_{\c}(P)}$, meaning that there is no other floor of height between $\psum(\c)_{\run_{\c}(P)}$ and $\psum(\c)_{\run_{\c}(P) + 1}$, so this floor becomes the $\bref_{\c}(P')$-th floor of $P'$, with $\fh_{\bref_{\c}(P')} \geq \psum(\c)_{\run_{\c}(P') + 2}$.
    We thus also have the same moving distances in this case.
  \item $\fh_{\bref_{\c}(P')} < \psum(\c)_{\run_{\c}(P') + 2}$.
    In this case, the moving distance is $\fh_{\bref_{\c}(P')} - \psum(\c)_{\run_{\c}(P') + 1}$, and the $\bref_{\c}(P')$-th floor of $P'$ is not moved in the ordinary lowering.
    The floors of $P'$ of heights between $\psum(\c)_{\run_{\c}(P') + 1}$ and $\psum(\c)_{\run_{\c}(P') + 2}$ are thus moved together so that the bottom one touches the height $\psum(\c)_{\run_{\c}(P') + 1}$.
    As the floor of height $\psum(\c)_{\run_{\c}(P') + 2}$ becomes the top block-lifted floor in $P$, we have the same moving distances.
    The converse holds similarly.
  \end{itemize}
  As the moving distances are the same in each case, we have the bijectivity.
\end{proof}

\begin{theorem}\label{thm:Comp}
  For a boundary $B$ and composition $\c \le_c \vc(B)$, we have the symmetry
  \begin{equation}
    \sum_{P \in \CP_B} q^{\run_\c(P)} t^{\return(P)} =
    \sum_{P \in \CP_B} q^{\return(P)} t^{\run_\c(P)}.
  \end{equation}
\end{theorem}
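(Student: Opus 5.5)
The plan is to follow the template of \Cref{thm:Phi,thm:Psi}: build an involution $\Psi_\c$ on $\CP_B$ from the bijections $\psi_\c$ of \Cref{lem:comp2}.

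First, I would check that $\CP_B$ is the disjoint union of the sets $\cCP_B^r(k,\ell)$ with $r \le \min(k,\ell)$. Each part of $\c$ counted by $\rr_\c$ is counted by $\run_\c$. Each such part has a floor at its height that is counted by $\return$, and distinct parts give distinct floors. Hence $r \le k$ and $r \le \ell$.

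Next I define the involution. For $P \in \cCP_B^r(k,\ell)$, set $\Psi_\c(P) := \psi_\c^{k-\ell}(P)$ when $k \ge \ell$, and $\Psi_\c(P) := \overline{\psi_\c}^{\,\ell-k}(P)$ when $k < \ell$. When $k \ge \ell$, apply \Cref{lem:comp2} successively to $\cCP_B^r(k-i,\ell+i)$ for $i = 0, \dots, k-\ell-1$. The hypotheses needed are $r \le \ell + i$, which holds since $r \le \ell$, and $r < k - i$. The second holds because $k - i > \ell + i \ge r$ whenever $i < (k-\ell)/2$. For the remaining $i$, I instead use that $\psi_\c$ is a bijection with domain $\cCP_B^r(k-i,\ell+i)$ and $r \le \ell + i$. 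I need $r < k-i$ for all $i \le k-\ell-1$, i.e.\ $r < \ell+1$, which is $r \le \ell$.

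The case $k < \ell$ is symmetric. I apply the inverse $\overline{\psi_\c}$, and \Cref{lem:psi-c-inv-defined}/\Cref{lem:comp2} give that each step is a bijection $\cCP_B^r(k'-1,\ell'+1) \to \cCP_B^r(k',\ell')$. The required conditions there are $r < \ell'+1$ and $r \le k'$ at every stage, and both follow from $r \le k < \ell$.

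It follows that $\Psi_\c$ maps $\cCP_B^r(k,\ell)$ bijectively onto $\cCP_B^r(\ell,k)$. Since $\psi_\c$ and $\overline{\psi_\c}$ are mutually inverse, $\Psi_\c$ is an involution on $\CP_B$ exchanging $\run_\c$ and $\return$, and preserving composition-type. Summing $q^{\run_\c}t^{\return}$ over the partition then gives the claimed symmetry.

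The only delicate point is the bookkeeping of the ranges where \Cref{lem:comp2} applies at every intermediate step, in particular that the condition $r < k$ persists until the pair $(k,\ell)$ is fully swapped. As argued above, this reduces to $r \le \min(k,\ell)$, which is where \Cref{lem:comp} (initial consecutiveness of the parts counted by $\rr_\c$) implicitly underlies the lemmas being invoked.
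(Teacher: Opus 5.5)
Your proposal is correct and is essentially the paper's proof. The paper likewise sets $\Psi_\c(P) := \psi_\c^{k-\ell}(P)$ on $\cCP_B^r(k,\ell)$, with negative powers meaning $\overline{\psi_\c}$, and invokes \Cref{lem:comp2}; your detour through $i < (k-\ell)/2$ is unnecessary, since for every $0 \le i \le k-\ell-1$ one has directly $k-i \ge \ell+1 > \ell \ge r$.
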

\begin{proof}
  For any $\alpha \in \cCP_B^r(k, \ell)$, we set $\Psi_\c(\alpha) := \psi_\c^{k-\ell}(\alpha)$.
  It follows from Lemma~\ref{lem:comp2} that $\Psi_\c$ is an involution exchanging $\run_\c$ and $\return$, which implies our claim.
\end{proof}

\begin{corollary}\label{coro:equidist}
  For boundary $B$ and composition $\c \le_c \vc(B)$, we have the equidistribution
  \begin{equation}
    \sum_{P \in \CP_B} t^{\return(P)} = \sum_{P \in \CP_B} t^{\run_\c(P)}.
  \end{equation}
\end{corollary}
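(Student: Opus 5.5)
This follows by specializing \Cref{thm:Comp}. Setting $q = 1$ in the symmetry
\[
  \sum_{P \in \CP_B} q^{\run_\c(P)} t^{\return(P)} = \sum_{P \in \CP_B} q^{\return(P)} t^{\run_\c(P)}
\]
turns the left-hand side into $\sum_{P \in \CP_B} t^{\return(P)}$ and the right-hand side into $\sum_{P \in \CP_B} t^{\run_\c(P)}$. These are exactly the two sides of the claimed equidistribution. No hypothesis is needed beyond $\c \le_c \vc(B)$, which is already the assumption of \Cref{thm:Comp}.

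Bijectively, the involution $\Psi_\c$ from the proof of \Cref{thm:Comp} maps $\CP_B$ to itself and sends $\return$ to $\run_\c$. It therefore witnesses the equidistribution directly.

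There is no genuine obstacle here. The only point to check is that $\Psi_\c$ maps $\CP_B$ into $\CP_B$. This holds because each $\psi_\c$ preserves composition-type, by \Cref{lem:comp2}, and membership in $\CP_B$ depends only on composition-type.
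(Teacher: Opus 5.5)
Your proof is correct and is the same as the paper's: specialize \Cref{thm:Comp} at $q = 1$. Your bijective aside also holds, with one precision: membership in $\CP_B$ requires being bounded by $B$ as well as the composition-type condition, and \Cref{lem:psi-c-defined,lem:psi-c-inv-defined} guarantee both.
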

\begin{proof}
  Take $q = 1$ in \Cref{thm:Comp}.
\end{proof}

\begin{remark}
  We have three remarks regarding the involution $\Psi_\c$.
  \begin{enumerate}
  \item The involution $\Psi_\c$ actually unifies both $\Phi$ and $\Psi$. We choose not to introduce $\Psi_\c$ directly, because we think that the boundary run can be visualized intuitively, and that presenting the symmetry of boundary run and return first may make $\Psi_\c$ easier to understand.
  \item The constraint on the composition $\c$ for the $\run_\c$ statistic 
    can be relaxed. The original range $\c\le_c\vc(B)$ may be extended to $\c \leq_d \vc(B)$, as long as the following condition is satisfied:
    \begin{center}
  if $\psum(c)_i < \fh_i(B)$ for some $i$, then $\psum(\c)_j < \fh_j(B)$ for every $j > i$.
    \end{center}
    Here, $\leq_d$ is the \tdef{dominance order}, which holds when $\sum_{j=1}^i c_j \leq \sum_{j=1}^i \vc(B)_j$ for all $i \in [m]$, where $m$ is the shortest length of $\c$ and $\vc(B)$.
    This additional condition ensures that the properties established in the proof of \Cref{lem:comp,lem:psi-c-defined,lem:psi-c-inv-defined} continue to hold. 
  \item $\Psi_{\c}$ does not preserve a third statistic $\run_{\c'}$ for another $\c' \leq_c \vc(B)$.
    Moreover, although by \Cref{coro:equidist}, $\run_{\c}$ and $\run_{\c'}$ are always equidistributed, they are not symmetric in general.
    For instance, take $\run$ and $\brun$, which correspond to $\c = (1, \ldots, 1)$ and $\c = \vc(B)$ respectively, and the generating function $G_B(s, t) = \sum_{P \in \CP_B} s^{\run(P)} t^{\brun(P)}$ for $B = NNENNE$ is $G_B(s, t) = s^3t^2 + st^3 + 3st^2 + 4s^2t + 3st$, for which clearly $G_B(s, t) \neq G_B(t, s)$.
  \end{enumerate}
\end{remark}

\section{Applications to some families of lattice paths}\label{sec4}

In this section, we apply results in \Cref{sec3} to two families of lattice paths recast as bounded lattice paths: rational Dyck paths to answer \Cref{prob:Dai}, and $\vec{k}$-bounded paths to answer \Cref{prob:Qu}. 

\subsection{Rational Dyck paths} \label{sec:rational-dyck}

For $m, n > 0$, the \tdef{diagonal cells} of the $m \times n$ grid are squares intersecting the main diagonal.
See \Cref{fig:ration} for an example, in which they are colored blue.
The \tdef{diagonal boundary} of the grid, denoted by $B_{m,n}$, is the lower-right boundary of all diagonal cells.
We denote by $\D_{m,n} = \P_{B_{m, n}}$ the family of lattice paths 
bounded by $B_{m,n}$, also called \tdef{$(m, n)$-rational Dyck paths}.

\begin{proposition}\label{prop:ration}
  For $m \leq n$, we have $\CP_{B_{m,n}} = \P_{B_{m,n}}$.
\end{proposition}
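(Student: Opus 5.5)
Since $B_{m,n}$ runs from $(0,0)$ to $(m,n)$ with $m$ east steps and $n$ north steps, the plan is to reduce to \Cref{prop:bounded}. By that proposition, it suffices to show that $B_{m,n}$ contains no two consecutive $E$-steps whenever $m \le n$. Then $\comp(B_{m,n}) = (1,\dots,1)$, and every composition of $m$ is constrained by it.

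To verify this, we describe $B_{m,n}$ explicitly. The diagonal is the segment $y = (n/m)x$. In the column $[i-1, i] \times \mathbb{R}$, the diagonal passes through heights ranging over $[(i-1)n/m, \, in/m]$. The diagonal cells in that column are those $[i-1,i]\times[j-1,j]$ whose interior meets the line. The lower-right boundary of the union of diagonal cells therefore has, in column $i$, exactly one $E$-step, at height $\lfloor (i-1)n/m \rfloor$ (the bottom of the lowest diagonal cell in that column). Between column $i$ and column $i+1$, the boundary rises by $N$-steps from the bottom of the lowest cell in column $i$ to the bottom of the lowest cell in column $i+1$.

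The key inequality is that consecutive $E$-steps would require $\lfloor i n/m \rfloor = \lfloor (i-1)n/m \rfloor$. This is impossible when $n/m \ge 1$, since the two arguments differ by at least $1$. Hence between any two $E$-steps of $B_{m,n}$ there is at least one $N$-step, and the conclusion follows from \Cref{prop:bounded}.

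The only delicate point is making precise which cells count as diagonal when the line passes exactly through a lattice point, for instance when $\gcd(m,n) > 1$. We also need to check that the bottom of the lowest diagonal cell in column $i$ is indeed $\lfloor (i-1)n/m\rfloor$ rather than an off-by-one variant. Any consistent convention gives a lowest cell whose bottom is determined by the height of the line at the left edge of the column, and this height strictly increases by $n/m \ge 1$ from column to column, so the strict increase of the $E$-step heights survives. The first column is harmless: the virtual $E$-step at height $1$ is followed by the $N$-steps of the boundary, and even if the first real $E$-step sits at height $0$ relative to the virtual one, the paper treats these by the convention fixing $\vc$ to start with $1$. I would double-check this edge case against \Cref{fig:ration}.
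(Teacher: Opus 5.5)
Your proof is correct and uses the same reduction as the paper, namely \Cref{prop:bounded}. The difference is in how you show that $B_{m,n}$ has no two consecutive $E$-steps. The paper identifies $B_{m,n}$ with a Christoffel word of slope at most $1$ and cites Berstel et al. You instead place the $E$-step of column $i$ at $y=\lfloor (i-1)n/m\rfloor$ and observe that $\lfloor in/m\rfloor-\lfloor (i-1)n/m\rfloor\ge\lfloor n/m\rfloor\ge 1$.

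The citation buys brevity. Your computation is self-contained and treats $\gcd(m,n)>1$ uniformly; in that case $B_{m,n}$ is a power of a Christoffel word rather than a Christoffel word. Essentially the same inequality also gives the bound $d\ge\lfloor n/m\rfloor$ that the paper again borrows from Christoffel theory in the proof of \Cref{thm:ration2}.

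You should settle the convention question differently. Do not rely on the claim that ``any consistent convention'' works: counting cells that merely touch the diagonal at a corner, restricted to the grid, gives $B_{n,n}=EE(NE)^{n-2}NN$. Instead, note that $\D_{m,n}=\P_{B_{m,n}}$ must be the set of paths weakly below $my=nx$. This forces $B_{m,n}$ to be the highest lattice path weakly below the line, which is exactly your formula.

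Your treatment of the first column needs correcting. $B_{m,n}$ does not start with $N$-steps. The diagonal passes through the origin, so the lowest diagonal cell of the first column is $[0,1]\times[0,1]$, and $B_{m,n}$ starts with an $E$-step at $y=0$ (e.g.\ $B_{n,n}=(EN)^n$). Together with the virtual step this forms a first floor of length $2$, so $\comp(B_{m,n})=(2,1,\dots,1)$. The proof of \Cref{prop:bounded}, which asserts $\comp(B)=(1,\dots,1)$, therefore does not apply verbatim. The paper's one-line proof passes over the same point, and the convention that $\vc$ starts with $1$ is irrelevant here.

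The fix is immediate. Any $P\in\P_{B_{m,n}}$ also has its first real $E$-step at $y=0$, so $\alpha=\comp(P)$ satisfies $\alpha_1\ge 2$. Then \eqref{eq:boundary-constrain} holds with $\beta=(2,1,\dots,1)$. For $j=1$, the conclusion $\sum_{k\le i}\alpha_k\ge 2$ follows from $\alpha_1\ge 2$. For $j\ge 2$, the hypothesis $\sum_{k<i}\alpha_k\ge j$ gives $\sum_{k\le i}\alpha_k\ge j+1$, since $\alpha_i\ge 1$. Geometrically, the only floor of $B_{m,n}$ of length more than one spans abscissas $[-1,1]$, and the first floor of $P$ covers it, so no floor of $P$ lies strictly inside it.
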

\begin{proof}
  The path $B_{m, n}$ is a Christoffel word in $\{N, E\}$ with slope $m / n \leq 1$, hence it does not have consecutive $E$-steps; see~\cite[Theorem 6.12]{Berstel2008}.
  We then conclude by \Cref{prop:bounded}.
\end{proof}

\begin{figure}
  \centering
  \begin{tikzpicture}[
    scale=0.45,
    >=stealth,
    grid/.style={gray, thin},
    boundary/.style={},
    pathA/.style={blue, very thick},
    pathB/.style={very thick},
    highlight/.style={red, dotted, very thick}
    ]

    \begin{scope}
      \fill[blue!30] 
      (0,0) rectangle (1,2)
      (1,1) rectangle (2,3)
      (2,2) rectangle (3,4)
      (3,3) rectangle (4,6)
      (4,5) rectangle (5,7)
      (5,6) rectangle (6,8)
      (6,7) rectangle (7,10)
      (7,9) rectangle (8,11)
      (8,10) rectangle (9,12)
      (9,11) rectangle (10,14)
      (10,13) rectangle (11,15)
      (11,14) rectangle (12,16)
      (12,15) rectangle (13,17)
      ;
      \draw[grid] (0,0) grid (13,17);
      \draw[grid] (0,0) -- (13,17);
      \draw[boundary] (0,0) rectangle (13,17);
      \draw[pathB]
      (-1,0)--(0,0) -- (1,0) -- (1,1) -- (2,1) -- (2,2) -- (4,2) -- (4,3)--(5,3)-- (5,4)--(7,4)--(7,6)--(8,6)--(8,10)--(10,10)--(10,12)--(11,12)--(11,14)--(13,14)--(13,17) ;
    \end{scope}

    \begin{scope}[xshift=15cm]

      \fill[blue!30] 
      (0,0) rectangle (1,3)
      (1,2) rectangle (2,5)
      (2,4) rectangle (3,7)
      (3,6) rectangle (4,9)
      (4,8) rectangle (5,11)
      (5,10) rectangle (6,13)
      (6,12) rectangle (7,15)
      (7,14) rectangle (8,17);


      \draw[grid] (0,0) grid (8,17);
      \draw[grid] (0,0) -- (8,17);
      \draw[boundary] (0,0) rectangle (8,17);
      \draw[pathB]
      (-1,0)--(0,0) -- (2,0) -- (2,2) -- (3,2) -- (3,4) -- (5,4) -- (5,8)--(6,8) -- (6,12) -- (8,12) -- (8,17) ;
    \end{scope}

    \begin{scope}[xshift=25cm]
      \fill[blue!30] 
      (0,0) rectangle (1,4)
      (1,3) rectangle (2,7)
      (2,6) rectangle (3,11)
      (3,10) rectangle (4,14)
      (4,13) rectangle (5,17)
      ;

      \draw[grid] (0,0) grid (5,17);
      \draw[grid] (0,0) -- (5,17);
      \draw[boundary] (0,0) rectangle (5,17);
      \draw[pathB]
      (-1,0)--(0,0) -- (1,0) -- (1,1) -- (2,1) -- (2,3) -- (3,3) -- (3,6)--(4,6)-- (4,9)--(5,9)--(5,17) ;
    \end{scope}
  \end{tikzpicture}
  \caption{Examples of rational Dyck paths (with size $(13, 17)$, $(8, 17)$, and $(5, 17)$).}
  \label{fig:ration}
\end{figure}

Dai, Fu, and Qiu~\cite{Dai2026} introduced the following run-like statistic on $\D_{m, n}$ for $m \le n$. 

\begin{definition}\label{def:rrun}
  For $1 \leq m \leq n$, let $k = \left\lfloor \frac{n}{m} \right\rfloor$. The \tdef{ratio-run} statistic of $P \in \D_{m, n}$, denoted by $\rrun(P)$, is defined as 
  \[
    \rrun(P) = \min \left\{i \in[n]: k i + 1 \notin \fh(B_{m,n})\right\}.
  \]
\end{definition}

For example, the pair $(\rrun, \brun)$ for each path in \Cref{fig:ration} is $(5,4)$, $(3,3)$, and $(4,3)$ respectively.

One of the main results of \cite{Dai2026} is that $(\run, \return)$ and $(\rrun, \return)$ are both symmetric over $\D_{m,n}$ for $m \le n$.
We recover and generalize this result by applying \Cref{thm:Comp} to $\D_{m, n}$.

\begin{theorem}\label{thm:ration2}
  Let $1 \leq m \le n$ and $\c \vDash n + 1$ with $\c \le_c \vc(B_{m,n})$. 
  Then there is a composition-type-preserving involution on $\D_{m,n}$ realizing the symmetry
  \begin{equation} \label{eq:rational-general}
    \sum_{P \in \D_{m,n}} q^{\run_\c(P)} t^{\return(P)} =
    \sum_{P \in \D_{m,n}} q^{\return(P)} t^{\run_\c(P)}.
  \end{equation}
  In particular, taking $\c = (1, \dots, 1)$ and $\c = \vc(B_{m,n})$, we have the symmetries
  \begin{equation}\label{eq:ration-run}
    \sum_{P \in \D_{m,n}} q^{\run(P)} t^{\return(P)}=
    \sum_{P \in \D_{m,n}} q^{\return(P)} t^{\run(P)}
  \end{equation}
  and
  \begin{equation}
    \sum_{P \in \D_{m,n}} q^{\brun(P)} t^{\return(P)}=
    \sum_{P \in \D_{m,n}} q^{\return(P)} t^{\brun(P)}.
  \end{equation}
  For $\c = \c_{m, n} = (c_1, c_2, \dots, c_\ell)$ such that
  \[
    c_1 = 1, \quad c_i = \left\lfloor \frac{n}{m} \right\rfloor \text{ for } 1 < i < \ell, \quad c_\ell = n - (\ell - 2) \left\lfloor \frac{n}{m} \right\rfloor \le \left\lfloor \frac{n}{m} \right\rfloor,
  \]
  we have the symmetry
  \begin{equation} \label{eq:rational-rrun}
    \sum_{P \in \D_{m,n}} q^{\rrun(P)} t^{\return(P)} =
    \sum_{P \in \D_{m,n}} q^{\return(P)} t^{\rrun(P)}.
  \end{equation}
\end{theorem}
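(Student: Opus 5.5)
The plan is to deduce everything from \Cref{thm:Comp} applied to the boundary $B = B_{m,n}$, using \Cref{prop:ration}. Since $m \le n$, \Cref{prop:ration} gives $\CP_{B_{m,n}} = \P_{B_{m,n}} = \D_{m,n}$. So for any $\c \le_c \vc(B_{m,n})$, the involution $\Psi_\c$ of \Cref{thm:Comp} is an involution on $\D_{m,n}$ exchanging $\run_\c$ and $\return$. It is composition-type-preserving because it is an iterate of $\psi_\c$ or $\overline{\psi_\c}$, both composition-type-preserving by \Cref{lem:comp2}. This gives \eqref{eq:rational-general}. The size of $\vc(B_{m,n})$ is the top floor height; by the height convention this is $n+1$, matching $\c \vDash n+1$.

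For the two named special cases, I will check the hypotheses directly. For $\c = (1,\dots,1)$, every part of $\vc(B_{m,n})$ is $\ge 1$, so $\c \le_c \vc(B_{m,n})$. Moreover, $\c$ has length $n+1$, which is at least the length of $\vc(B_{m,n})$. Then $\run_\c = \run$, as already noted after the definition of composition runs. For $\c = \vc(B_{m,n})$ the order holds trivially, and $\run_\c = \brun$. This gives the second and third displayed symmetries.

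The main work is the ratio-run case. Set $k = \lfloor n/m \rfloor$. I need two things: that $\c_{m,n} \le_c \vc(B_{m,n})$, and that $\run_{\c_{m,n}} = \rrun$ on $\D_{m,n}$.

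For the first, I use the Christoffel-word structure of $B_{m,n}$ with slope $m/n \le 1$. Each floor of $B_{m,n}$ is a single $E$-step, and between consecutive $E$-steps there are either $k$ or $k+1$ $N$-steps, except possibly at the start. So $\vc(B_{m,n})$ has first part $1$ (the virtual step) and every subsequent part at least $k$, with one subtlety at the first real $E$-step. From the figure, the first $E$-step sits at height $1$, so the second part is $0$ and the floors merge. I will handle this by noting that $\fh(B)$ starts $1, 1+a_1, \dots$, where the gaps $a_i$ are the numbers of $N$-steps between consecutive $E$-steps. Each $a_i$ is $\lfloor \cdot \rfloor$ or $\lceil \cdot \rceil$ of $n/m$, hence at least $k$, except the last gap before the final vertical run, which may be shorter. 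The number of parts of $\c_{m,n}$ must therefore be compared with that of $\vc(B)$. The last part of $\c_{m,n}$ is $c_\ell \le k$, which absorbs the remainder; since the condition $\le_c$ only compares the first $\min$ many parts, I expect the inequality $c_i \le \vc(B)_i$ to follow termwise from $a_i \ge k$. This verification against the exact Christoffel gap sequence is the step I expect to be fiddliest, and I would do it by writing the height of the $i$-th $E$-step as $\lceil (i-1)n/m \rceil$-type expressions and comparing with $1 + (i-1)k$.

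For the second, $\psum(\c_{m,n})_i = 1 + (i-1)k$. So $\run_{\c_{m,n}}(P)$ is the largest $j$ such that $1, 1+k, \dots, 1+(j-1)k$ all lie in $\fh(P)$. This is exactly $\min\{i : ki+1 \notin \fh\} $ under the matching index shift, interpreting the set in \Cref{def:rrun} as $\fh(P)$, which is the Dai--Fu--Qiu definition. The cap at $\ell$ parts matches the range $i \in [n]$, and the final part $c_\ell$ only matters once the top height $n+1$ is reached. With these identifications, \eqref{eq:rational-rrun} follows from \eqref{eq:rational-general}.
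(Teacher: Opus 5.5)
Your route is the paper's: \Cref{prop:ration} gives $\CP_{B_{m,n}}=\D_{m,n}$, and \Cref{thm:Comp} gives the general symmetry. The ratio-run case then reduces to checking $\c_{m,n}\le_c\vc(B_{m,n})$ and $\run_{\c_{m,n}}=\rrun$, reading $\fh(P)$ in \Cref{def:rrun}, as you rightly do. Two of your details are wrong, though, and the first one conflicts with what your final step needs.

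First, $|\vc(B_{m,n})|\neq n+1$. Write $B_{m,n}=EN^{a_1}EN^{a_2}\cdots EN^{a_m}$ with $a_i=\lfloor in/m\rfloor-\lfloor (i-1)n/m\rfloor$. The boundary ends with the vertical run $N^{a_m}$, so $\fh(B_{m,n})=(1,1+a_1,\dots,1+a_1+\dots+a_{m-1})$ and $\vc(B_{m,n})=(1,a_1,\dots,a_{m-1})$. Hence the top floor is at height $\lfloor (m-1)n/m\rfloor+1\le n$.

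So $\c\vDash n+1$ can only be compared with $\vc(B_{m,n})$ partwise on the first $m$ parts, together with the length condition. The paper is equally loose about this, and that is all the proof of \Cref{thm:Comp} uses.

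More importantly, the fact that no floor of any $P\in\D_{m,n}$ reaches height $n+1$ is exactly what makes the identification work, and the paper invokes it explicitly. Neither $\psum(\c_{m,n})_\ell=n+1$ nor $1+(\ell-1)k\ge n+1$ can lie in $\fh(P)$. Hence $\run_{\c_{m,n}}(P)\le \ell-1$, and both statistics equal $\min\{i\ge 1: 1+ik\notin\fh(P)\}$. If a floor at height $n+1$ were possible, as your size claim implies, this would fail when $c_\ell<k$: a path with floors at heights $1,1+k,\dots,1+(\ell-2)k,n+1$ would have $\run_{\c_{m,n}}=\ell$ but $\rrun=\ell-1$. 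Your remark that $c_\ell$ ``only matters once height $n+1$ is reached'' should be replaced by the fact that this height is never reached.

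Second, your caveat that the last gap of $B_{m,n}$ ``may be shorter'' than $k$ is false. If it were true, the comparison would fail at index $m$ (for $m\ge 2$), where $c_m=k$ because $\ell=\lceil n/k\rceil+1\ge m+1$. Superadditivity of the floor function gives $a_i\ge\lfloor n/m\rfloor=k$ for every $i$. The final run $a_m$ is not a part of $\vc(B_{m,n})$ anyway.

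With $c_1=1$, $c_i\le k$ and $\ell\ge m+1$, this gives $\c_{m,n}\le_c\vc(B_{m,n})$ immediately. Compare gaps, not partial sums: checking $1+(i-1)k\le\fh_i(B_{m,n})$, as you propose, is only a dominance-type condition. That is weaker than the hypothesis of \Cref{thm:Comp}.
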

\begin{proof}
  The general case is simply the application of \Cref{thm:Comp} to $\D_{m, n}$, and the first two symmetries come directly from the definitions of $\run$ and $\brun$.
  For the last symmetry, we first show that $\c_{m,n} \le_c \vc(B_{m,n})$.
  let $d$ be the shortest length of maximal segments of consecutive $N$-steps in $B_{m,n}$.
  By properties of Christoffel words (see \cite[Theorem 6.12]{Berstel2008}), $d \ge \left\lfloor \frac{n}{m} \right\rfloor$.
  Moreover, no part of $\c_{m,n}$ exceeds $\left\lfloor \frac{n}{m} \right\rfloor$, and $\c_{m,n} \vDash n + 1$.
  Hence, $\c_{m,n} \le_c \vc(B_{m,n})$.
  \Cref{eq:rational-general} thus applies to $\c_{m, n}$.
  By the definition of $\run_{\c}$, we have
  \[
    \run_{\c_{m,n}}(P) = \max \{i \in [n] : \{c_1, c_1 + c_2, \dots, c_1 + \dots + c_i\} \subseteq \fh(P) \}.
  \]
  Since $B_{m,n}$ has no floor at height $n + 1$, neither $\sum_{1\le i\le \ell}c_i$ nor $\sum_{1\le i\le \ell-1}c_i+\left\lfloor \frac{n}{m} \right\rfloor$ is in $\fh(P)$.
  Thus,
  \[
    \run_{\c_{m,n}}(P)
    = \min \left\{i \in [n] : c_1 + \dots + c_{i+1} \notin \fh(P) \right\}
    = \min \left\{i \in [n] : 1 + i \left\lfloor \frac{n}{m} \right\rfloor \notin \fh(P) \right\}.
  \]
  Comparing with \Cref{def:rrun}, $\run_{\c_{m,n}}$ coincides with $\rrun$, hence \Cref{eq:rational-general} turns into \Cref{eq:rational-rrun}.
\end{proof}

For general $m, n$, applying \Cref{thm:Theta} to $\D_{m, n}$ leads to the following answer to Problem~\ref{prob:Dai}.

\begin{theorem}\label{thm:ration1}
  For $m, n \geq 1$, there is a composition-type-preserving involution on $\D_{m,n}$ realizing the symmetry
  \begin{equation}
    \sum_{P \in \D_{m,n}} q^{\run(P)} t^{\breturn(P)} =
    \sum_{P \in \D_{m,n}} q^{\breturn(P)} t^{\run(P)}.
  \end{equation}
\end{theorem}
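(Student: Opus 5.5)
The plan is to derive this as a direct specialization of \Cref{thm:Theta} with $B = B_{m,n}$. By definition $\D_{m,n} = \P_{B_{m,n}}$, and \Cref{thm:Theta} holds for \emph{every} boundary path on the full set $\P_B$, without the boundary constraint. So, unlike \Cref{thm:ration2}, we need neither \Cref{prop:ration} nor the hypothesis $m \le n$. The involution will be $\Theta$ from the proof of \Cref{thm:Theta}, namely $\Theta(P) = \theta^{k-\ell}(P)$ for $P \in \widehat{\P}_{B_{m,n}}^r(k,\ell)$, with the convention that a negative power means iterating $\overline{\theta}$.

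The steps, in order, are as follows.

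First, recall from \Cref{lem:theta} that the floors counted by $\rbr$ are initial and consecutive. This guarantees that $\P_{B_{m,n}}$ is partitioned into the sets $\widehat{\P}_{B_{m,n}}^r(k,\ell)$ with $r \le \min(k,\ell)$.

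Second, invoke \Cref{lem:theta2}. For $r \le \ell$ and $r < k$, $\theta$ is a bijection $\widehat{\P}^r(k,\ell) \to \widehat{\P}^r(k-1,\ell+1)$ with inverse $\overline{\theta}$. Iterating $|k-\ell|$ times therefore lands in $\widehat{\P}^r(\ell,k)$. At every intermediate stage the index conditions stay valid: the run index stays at least $\min(k,\ell) \ge r$, and it is strictly greater than $r$ whenever another $\theta$ step is needed. It follows that $\Theta$ swaps $(\run,\breturn)$ and satisfies $\Theta\circ\Theta = \mathrm{id}$.

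Third, check that $\Theta$ is composition-type-preserving. This holds because each application of $\theta$ or $\overline{\theta}$ only changes floor heights, never floor lengths, as stated in \Cref{lem:theta2}.

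Summing $q^{\run}t^{\breturn}$ over $\P_{B_{m,n}}$ and pushing the sum forward along $\Theta$ then gives the claimed symmetry.

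The only point I expect to need care is the iteration bookkeeping in the second step: confirming that every intermediate path satisfies the strict inequality $r < k'$ required by \Cref{lem:theta2}. When $k > \ell \ge r$, the intermediate run values are $k, k-1, \dots, \ell+1$, all greater than $r$. The case $k < \ell$ follows symmetrically using $\overline{\theta}$, which needs $r < \ell'$ along the way. This is the same verification as in \Cref{thm:Theta}, so no new obstacle arises.
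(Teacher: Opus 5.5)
Your proposal is correct and follows the paper's route exactly: the paper obtains this theorem by applying \Cref{thm:Theta}, that is, the involution $\Theta$ built from $\theta$ and $\overline{\theta}$ of \Cref{lem:theta2}, to the boundary $B_{m,n}$. This works because $\D_{m,n}=\P_{B_{m,n}}$ and no boundary constraint or hypothesis $m\le n$ is needed. Your iteration bookkeeping just spells out what the proof of \Cref{thm:Theta} leaves implicit; one small correction is that the partition into the sets $\widehat{\P}^r_{B_{m,n}}(k,\ell)$ with $r\le\min(k,\ell)$ follows directly from the definitions, while \Cref{lem:theta} is really needed only inside \Cref{lem:theta2}.
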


\subsection{$\vec{k}$-lattice paths}

There are several equivalent models for $\vec{k}$-Dyck paths~\cite{Xin2023}, and we use the following.
In this section, we allow $E$-steps to have length larger than $1$.
Given a vector $\vec{k} = (k_1, k_2, \dots, k_\ell) \vDash n$ of positive integers, a \tdef{$\vec{k}$-Dyck path} is a lattice path from $(0,0)$ to $(n, n)$ weakly below the diagonal $y = x$.
Its $E$-steps have lengths $k_1, k_2, \dots, k_\ell$ from left to right, 
while all north steps are of length $1$.
A $\vec{k}$-Dyck path can be seen as a classical Dyck path by breaking each $E$-step of length $k$ into $k$ $E$-steps of length $1$.
Classical Dyck paths are special cases of $\vec{k}$-Dyck paths with $\vec{k} = (1, \dots, 1)$.
We now give a bounded version of $\vec{k}$-Dyck paths.

\begin{figure}
  \centering
  \begin{tikzpicture}[
    scale=0.45,
    >=stealth,
    grid/.style={gray, thin},
    boundary/.style={},
    pathA/.style={blue!60, very thick},
    pathB/.style={very thick},
    highlight/.style={red, dotted, very thick}
    ]

    \begin{scope}

      \draw[grid] (0,0) grid (15,15);

      \draw[boundary] (0,0) rectangle (15,15);

      \draw[pathA]
      (0,0) -- (0,2) -- (1,2) -- (1,4) -- (3,4) 
      -- (3,5) -- (5,5) -- (5,9) -- (6,9) -- (6,10)
      -- (7,10) -- (7,12) -- (11,12)--(11,14)--(14,14)--(14,15)--(15,15);

      \draw[pathB]
      (0,0) -- (1,0) -- (1,2) -- (4,2) -- (4,4) -- (6,4) -- (6,7)--(10,7)--(10,11)--(14,11)--(14,14)--(15,14)--(15,15) ;

      \draw[->](-1,0) -- (0,0);
      \draw[->](0,0) -- (1,0);
      \draw[->](1,2) -- (3,2);
      \draw[->](3,2) -- (4,2);
      \draw[->](4,4) -- (5,4);
      \draw[->](5,4) -- (6,4);
      \draw[->](6,7) -- (10,7);
      \draw[->](10,11) -- (11,11);
      \draw[->](11,11) -- (14,11);
      \draw[->](14,14) -- (15,14);

    \end{scope}
  \end{tikzpicture}
  \caption{A $\vec{k}$-bounded path with $\vec{k}=(1,2,1,1,1,4,1,3,1)$.}
  \label{fig:vec}
\end{figure}

\begin{definition}
  Fix $m, n > 0$ and $\vec{k} = (k_1, \dots, k_\ell) \vDash m$ a composition of $m$ of length $\ell$. Let $B$ be a path from $(0, 0)$ to $(m, n)$. A \tdef{$\vec{k}$-bounded path} with boundary $B$ is a path from $(0, 0)$ to $(m, n)$ weakly below $B$ such that the $i$-th $E$-step is of length $k_i$ and all $N$-steps are of length $1$. See \Cref{fig:vec}. We denote by $\D_B(\vec{k})$ the set of all such paths. 
\end{definition}

Again, decomposing length $k$ $E$-steps to $k$ $E$-steps of length $1$, each $\vec{k}$-bounded path is associated with a path bounded by $B$, to which we may assign all previously defined statistics.

\begin{proposition}\label{prop:obQu}
  Suppose that $(\text{stat}_1, \text{stat}_2)$ is symmetric on paths bounded by a given boundary $B$ of every composition-type $\c$ refining $\vec{k}$. Then the same holds for $\vec{k}$-bounded paths with the same boundary $B$.
\end{proposition}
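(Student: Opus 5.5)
The plan is to partition the unit-step paths by composition-type and then identify $\D_B(\vec{k})$ with a union of those classes. Breaking each long $E$-step of a $\vec{k}$-bounded path into unit $E$-steps gives an injective map from $\D_B(\vec{k})$ into $\P_B$. The first step is to characterise its image. Consecutive long $E$-steps at the same height merge into a single floor. So a unit-step path $P\in\P_B$ comes from a $\vec{k}$-bounded path exactly when every floor of $P$ is a union of consecutive blocks $k_i, k_{i+1},\dots,k_j$. Equivalently, $\comp(P)$ is a coarsening of $\vec{k}$; this is what "refining" should mean in the statement, with $\vec{k}$ refining $\c$. Moreover, for a given such $P$ the decomposition of floors into long steps is forced, since the lengths $k_1,\dots,k_\ell$ are read off in order. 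Hence
\[
  \D_B(\vec{k}) \;\cong\; \bigsqcup_{\c} \{P\in\P_B : \comp(P)=\c\},
\]
where $\c$ ranges over compositions of $m$ into which $\vec{k}$ refines. This bijection preserves every statistic, because by definition statistics on $\vec{k}$-bounded paths are computed on the associated unit-step path.

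The second step is to apply the hypothesis. For each such $\c$ the hypothesis gives
\[
  \sum_{\comp(P)=\c} q^{\text{stat}_1(P)}t^{\text{stat}_2(P)}=\sum_{\comp(P)=\c} q^{\text{stat}_2(P)}t^{\text{stat}_1(P)}.
\]
Summing over the finitely many admissible $\c$ and transporting through the bijection yields the symmetry on $\D_B(\vec{k})$. If the hypothesis is instead supplied by a composition-type-preserving involution (as for $\Phi,\Theta,\Psi,\Psi_\c$ in \Cref{lem:phi2,lem:theta2,lem:psi2,lem:comp2}), that involution restricts to each class and hence to the image of $\D_B(\vec{k})$. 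This gives an explicit involution on $\vec{k}$-bounded paths.

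The only delicate point is the first step: checking that the image is exactly a union of full composition-type classes. This requires two facts. First, the boundedness condition "weakly below $B$" is the same whether $E$-steps are long or unit, since it is a condition on the lattice path as a point set. Second, being a coarsening of $\vec{k}$ depends only on $\comp(P)$. I would also make explicit the convention that "refining" refers to $\vec{k}$ refining $\c$, and note that the virtual initial $E$-step is excluded from $\comp$, so it causes no issue.
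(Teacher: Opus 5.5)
Your proposal is correct and matches the paper's one-line argument, spelled out in more detail: $\D_B(\vec{k})$ is identified, with all statistics preserved, with the disjoint union of the full composition-type classes of $\P_B$ whose type is coarsened by $\vec{k}$, so the classwise symmetries (or any composition-type-preserving involution) simply add up. One minor point is that in the paper the virtual step is included in the first part of $\comp$ (e.g.\ $\comp(P)=(1,1,2,1,3,1,1,4,2)$ for the path with $15$ east steps in \Cref{fig-bounded-path}), so under that convention the condition reads ``$(1,\vec{k})$ refines $\comp(P)$''; either convention gives a union of full classes, so your argument is unaffected.
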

\begin{proof}
  It is because $\D_B(\vec{k})$ is the union of paths bounded by $B$ of all composition-type $\c$ such that $\vec{k}$ refines $\c$.
\end{proof}

Research on the statistics of $\vec{k}$-Dyck paths is in its beginning, with limited symmetry results.
Qu and Zhang \cite{Qu2026} hence posed Problem~\ref{prob:Qu0}.
Using Proposition~\ref{prop:obQu}, the results established in Section~\ref{sec3} answer Problem~\ref{prob:Qu}, summarized as follows.

\begin{theorem}
  For a boundary $B$ and a vector $\vec{k}$, there is an involution on $\D_B(\vec{k})$ preserving the composition-type and realizing the symmetry
  \begin{equation}
    \sum_{P \in \D_B(\vec{k})} q^{\run(P)} t^{\breturn(P)} =
    \sum_{P \in \D_B(\vec{k})} q^{\breturn(P)} t^{\run(P)}.
  \end{equation}
\end{theorem}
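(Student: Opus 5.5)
The plan is to reduce to \Cref{thm:Theta} through \Cref{prop:obQu}. The goal is therefore to show that the involution $\Theta$ of \Cref{thm:Theta} preserves composition-type. Once that holds, it restricts to an involution on the paths in $\P_B$ of each fixed composition-type $\c$, and on each such class it exchanges $\run$ and $\breturn$.

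The first step is to record that $\theta$ and $\overline{\theta}$ preserve composition-type. This is already part of \Cref{lem:theta2}: both procedures only change the heights of floors. They never merge or split floors, since a lifted floor stays at least one unit below the next floor and a lowered floor stays at least one unit above the preceding one. So the lengths of the floors, read from left to right, are unchanged. Hence $\Theta = \theta^{k-\ell}$ (or the corresponding power of $\overline{\theta}$ when $k<\ell$) maps each class $\{P\in\P_B : \comp(P)=\c\}$ to itself. On each class, $\Theta$ is an involution sending $(\run,\breturn)=(k,\ell)$ to $(\ell,k)$, because $\rbr$ is preserved and the index sets $\widehat{\P}_B^r(k,\ell)$ are matched bijectively.

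The second step is to identify $\D_B(\vec{k})$ with a union of such classes. Breaking each long $E$-step into unit steps turns a $\vec{k}$-bounded path into a path in $\P_B$. Its floors are obtained by concatenating consecutive long $E$-steps that are not separated by $N$-steps. So $\comp(P)$ is a coarsening of $\vec{k}$ in which $\vec{k}$ refines $\c$, and the allowed coarsenings are exactly those for which the corresponding path lies in $\P_B$. Conversely, each path in $\P_B$ whose composition-type is coarsened from $\vec{k}$ comes from a unique $\vec{k}$-bounded path, obtained by cutting each floor according to $\vec{k}$. This is the bookkeeping behind \Cref{prop:obQu}. I expect the main care to be needed here, in checking that the statistics $\run$ and $\breturn$ depend only on the associated unit-step path, which holds by definition since they are defined through floor heights and overlaps with $B$.

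Combining the two steps, $\Theta$ restricted to $\D_B(\vec{k})$ is a composition-type-preserving involution exchanging $\run$ and $\breturn$, which gives the symmetry. Unlike the $\return$ variants, no boundary-constraint hypothesis is needed, since \Cref{thm:Theta} holds on all of $\P_B$.
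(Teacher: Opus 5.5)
Your proposal is correct and follows the paper's own proof, which combines \Cref{prop:obQu} with \Cref{thm:Theta}. As you note, $\theta$ and $\overline{\theta}$ only change floor heights (\Cref{lem:theta2}), so $\Theta$ restricts to each composition-type class, and you correctly take $\comp(P)$ to be a coarsening of $\vec{k}$, the direction used in the proof of \Cref{prop:obQu}.
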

\begin{proof}
  This is a consequence of \Cref{prop:obQu,thm:Theta}.
\end{proof}

\begin{theorem}
  Let $B$ be a boundary and $\c$ a composition with $\c \le_c \vc(B)$. For any $\vec{k} \in \cons(\comp(B))$, there is a composition-type-preserving involution on $\D_B(\vec{k})$ that realizes the symmetry
  \begin{equation}
    \sum_{P \in \D_B(\vec{k})} q^{\run_\c(P)} t^{\return(P)} =
    \sum_{P \in \D_B(\vec{k})} q^{\return(P)} t^{\run_\c(P)}.
  \end{equation}
  In particular, for $\c = (1, \dots, 1)$ and $\c = \vc(B)$, we get the following symmetries respectively:
  \begin{align}
    \sum_{P \in \D_B(\vec{k})} q^{\run(P)} t^{\return(P)} &=
                                                            \sum_{P \in \D_B(\vec{k})} q^{\return(P)} t^{\run(P)}, \\
    \sum_{P\in\D_B(\vec{k})} q^{\brun(P)} t^{\return(P)} &=
                                                           \sum_{P\in\D_B(\vec{k})} q^{\return(P)} t^{\brun(P)}.
  \end{align}
\end{theorem}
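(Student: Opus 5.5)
The plan is to reduce the statement to \Cref{thm:Comp} through \Cref{prop:obQu}, as was done for the previous theorem with \Cref{thm:Theta}. Unrolling each $E$-step of length $k_i$ into $k_i$ unit $E$-steps identifies $\D_B(\vec{k})$ with the set of paths in $\P_B$ whose composition-type $\comp(P)$ is coarser than $\vec{k}$. Here a floor of $P$ is a union of consecutive long steps, so $\comp(P)$ is obtained from $\vec{k}$ by merging consecutive parts. All statistics $\run_\c$, $\return$, $\brun$ and $\run$ are then read on this unit-step path, as the paper prescribes.

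The key step is to show that every such path automatically lies in $\CP_B$, i.e.\ that $\comp(P) \in \cons(\comp(B))$ whenever $\vec{k} \in \cons(\comp(B))$ and $\comp(P)$ coarsens $\vec{k}$. I would argue this directly from \eqref{eq:boundary-constrain}. The partial sums of $\comp(P)$ form a subset $S$ of the partial sums of $\vec{k}$. Take consecutive elements $s < s'$ of $S \cup \{0\}$ and a partial sum $t_{j-1}$ of $\comp(B)$ with $s \ge t_{j-1}$. The merged part running from $s$ to $s'$ is a union of consecutive parts of $\vec{k}$, the first of which starts at $s \ge t_{j-1}$. Applying the constraint of $\vec{k}$ to that first part gives that its end is at least $t_j$, and therefore $s' \ge t_j$. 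This is exactly the constraint for $\comp(P)$. Geometrically, the claim is that if no step of $\vec{k}$ lies strictly inside a boundary floor in the forbidden way, then no union of consecutive steps does either. I expect this verification to be the only real obstacle, and it amounts to a short index check.

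Once this is established, \Cref{thm:Comp} and \Cref{lem:comp2} provide the involution $\Psi_\c$ on $\CP_B$. It is built from $\psi_\c$ and $\overline{\psi_\c}$, which only change floor heights and never change floor lengths, so it preserves composition-type. Hence $\Psi_\c$ restricts to an involution on each class $\{P\in\CP_B : \comp(P)=\c'\}$ and exchanges $\run_\c$ and $\return$ there. This is exactly the hypothesis of \Cref{prop:obQu}. Taking the union over the compositions $\c'$ coarsening $\vec{k}$, all of which lie in $\cons(\comp(B))$ by the previous step, gives a composition-type-preserving involution on $\D_B(\vec{k})$ and the stated symmetry.

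Finally, the two particular cases follow by specialization. For $\c=(1,\dots,1)$ we have $\c\le_c\vc(B)$ trivially, and for $\c=\vc(B)$ the relation holds by reflexivity. The text after the definition of composition runs identifies $\run_\c$ with $\run$ and $\brun_B$ respectively in these two cases.
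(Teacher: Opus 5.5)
Your proposal is correct and follows the paper's route, which is simply to invoke \Cref{prop:obQu} together with the composition-type-preserving involution $\Psi_\c$ from \Cref{thm:Comp}. Your explicit check that every coarsening of $\vec{k}$ remains in $\cons(\comp(B))$ supplies a step the paper leaves implicit: it guarantees that $\D_B(\vec{k})$ is a union of composition-type classes inside $\CP_B$, so $\Psi_\c$ restricts to it.
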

\begin{proof}
  This is a consequence of~\Cref{prop:obQu,thm:Comp}.
\end{proof}

\section{Symmetries on vector parking functions} \label{sec5}

In this section, we present some statistics on vector parking functions transferred from bounded lattice paths, generalizing those in classical parking functions.
Some of them come from a notion of prime decomposition for 
vector parking functions, which is new to the authors' knowledge.
Then, using \Cref{thm:Theta,thm:Comp}, we obtain several symmetries of vector parking functions on these statistics.

Let $\u = (u_1, \dots, u_n)$ be a weakly increasing vector of positive integers.
A sequence $\alpha = (\alpha_1, \dots, \alpha_n)$ is called a \tdef{vector parking function constrained by $\u$}, or simply a \tdef{$\u$-parking function}, if
\[
  \forall i \in [n], \; | \{j \in [n] : \alpha_j \le u_i\} | \ge i.
\]
In other words, the non-decreasing rearrangement $\inc(\alpha) = (\beta_1, \dots, \beta_n)$ of $\alpha$ satisfies $\beta_i \le u_i$ for every $i$.
We denote by $\PF_\u$ the set of all $\u$-parking functions.
As an example, let $\u = (1, 3, 5, 5, 5, 7)$, $\u' = (1, 3, 3, 4, 7, 7)$, and $\alpha = (5, 1, 2, 7, 1, 5)$, $\alpha' = (2, 6, 1, 3, 1, 7)$. Then $\alpha \in \PF_{\u}$ but $\alpha \notin \PF_{\u'}$, while $\alpha' \notin \PF_{\u}$ but $\alpha' \in \PF_{\u'}$.

Throughout this section, unless stated otherwise, $\u$ denotes a weakly increasing vector of positive integers, and $u_i$ its $i$-th element.
Every $\u$ can be identified with a lattice path $P_{\u}$ whose height sequence is $\u$.
Each $\u$-parking function $\alpha$ is thus associated with a path $P_\alpha$ bounded by $P_{\u}$ such that the height sequence of $P_\alpha$ is $\h(P_\alpha) = \inc(\alpha)$.
This is a many-to-one map from the set of all $\u$-parking functions to $\P_B$ with $B = P_\u$; see Figure~\ref{fig:PF-path} for an example.
More precisely, we have the following lemma. 

  \begin{figure}
    \centering
    \begin{tikzpicture}[
      scale=0.45,
      >=stealth,
      grid/.style={gray!60, thin},
      boundary/.style={},
      pathA/.style={blue!60, very thick},
      pathB/.style={very thick},
      highlight/.style={red, dotted, very thick},
      description/.style={
      anchor=east,
      font=\small
    }
      ]
      \begin{scope}[xshift=0cm, yshift=6cm]
        \node[description] at (-8, 1) {$(1, 2, 4, 7, 8, 8, 9) \in \PF_u$};
        \node[description] at (-8, 0) {$(7, 2, 8, 9, 1, 8, 4) \in \PF_u$};
        \node[description] at (-8, -1) {$u = (2, 4, 7, 7, 8, 11, 13)$};

        \draw[->, thick] (-6,0) -- (-3,0) node[midway,above] {};
      \end{scope}

      \begin{scope}
        \draw[grid] (0,0) grid (7,14);
        \draw[boundary] (0,0) rectangle (7,14);
        \draw[pathA]
        (0,0) -- (0,2) -- (1,2) -- (1,4) -- (2,4)
        -- (2,7) --  (4,7) -- (4,8)
        -- (5,8) -- (5, 11) -- (6,11) -- (6,13) -- (7,13);

        \draw[pathB]
        (-1,0) -- (0,0) -- (0,1) -- (1,1) -- (1,2) -- (2,2) -- (2,4) -- (3,4) -- (3,7) -- (4,7) -- (4,8) -- (6,8) -- (6, 11) -- (7,11) -- (7, 14);
      \end{scope}

    \end{tikzpicture}
    \caption{An example of the many-to-one map from $u$-parking functions to bounded path}
    \label{fig:PF-path}
  \end{figure}

\begin{lemma} \label{lem:parking-as-weighted-path}
  Let $B$ be a boundary ending at $(m, n)$ and $P \in \P_B$ a path bounded by $B$.
  Let $\u = \h(B)$.
  If the composition-type of $P$ is $\comp(P) = (c_1 + 1, c_2, \dots, c_k)$, then the number of $\u$-parking functions $\alpha \in \PF_{\u}$ such that $P_\alpha=P$ is
  \[
    \frac{n!}{c_{1}! c_{2}! \cdots c_{k}!} = \binom{n}{c_1, c_2, \ldots, c_k}.
  \]
\end{lemma}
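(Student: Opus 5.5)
The plan is to identify the fibre $\{\alpha \in \PF_\u : P_\alpha = P\}$ with the set of sequences $\alpha \in \mathbb{Z}_{>0}^n$ whose non-decreasing rearrangement equals $\h(P)$. I then count such sequences directly as rearrangements of a fixed multiset.

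First I check that boundedness of $P$ by $B$ is exactly the parking condition. Since $\u = \h(B)$, the path $B$ has $n$ real $E$-steps, so $P$ also has $n$ real $E$-steps. A path $P$ with the same endpoint lies weakly below $B$ if and only if, for every $i$, the $i$-th $E$-step of $P$ is at height at most that of the $i$-th $E$-step of $B$. Here I use that a lattice path is determined by its height sequence and that "weakly below" compares heights column by column. In other words, boundedness means $\h(P)_i \le u_i$ for all $i$.

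Hence a sequence $\alpha$ satisfies $\alpha \in \PF_\u$ and $P_\alpha = P$ exactly when $\inc(\alpha) = \h(P)$. The condition $\alpha \in \PF_\u$ is then automatic, since $\inc(\alpha)_i = \h(P)_i \le u_i$ is precisely the defining condition. So the fibre is the set of distinct words obtained by rearranging the multiset $\h(P)$.

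Next I determine the multiplicities of $\h(P)$ from the floors of $P$. Each floor of $P$ contributes as many copies of its height as it has $E$-steps, and distinct floors have distinct heights, since $\fh(P)$ is strictly increasing. The only subtlety is the virtual initial $E$-step of height $1$. It is counted in $\comp(P)$, since the first floor always contains it, but it is not an entry of $\alpha$. With $\comp(P) = (c_1+1, c_2, \dots, c_k)$, the multiset $\h(P)$ of real steps therefore has multiplicities $c_1, c_2, \dots, c_k$, with $c_1 \ge 0$ possibly zero when the first real step is not at height $1$. These multiplicities sum to $n$.

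Finally, the number of distinct sequences with these multiplicities is the multinomial coefficient $\frac{n!}{c_1!\cdots c_k!}$, which is the claimed count. The main point to get right is the off-by-one from the virtual step in the first part, together with the equivalence between boundedness and the heightwise comparison with $\u$. Everything else is routine.
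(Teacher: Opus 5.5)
Your proof is correct and follows the paper's own argument. The fibre is the set of rearrangements of the multiset of real $E$-step heights of $P$, which has multiplicities $c_1,\dots,c_k$ once the virtual step is removed from the first floor, and boundedness by $B$ is exactly the condition $\inc(\alpha)_i \le u_i$, so the count is the multinomial coefficient. (One small point: a boundary ending at $(m,n)$ has $m$ real $E$-steps, so the $n$ in the formula has to be read as the length of $\mathbf{u}$, which is what you implicitly did.)
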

\begin{proof}
  By the premises, such $\alpha$ is a sequence with a $c_i$ number of $\h(P)_i$'s for all $i$, whose counting is given by the multinomial.
\end{proof}

Recall that a parking function $\alpha = (a_1, \dots, a_n)$ is prime if
\[
  | \{j \in [n] : a_j < i \}| \ge i \quad \text{for every } i > 1.
\]
We extend this notion to $\u$-parking functions as follows.

\begin{definition} \label{defn:prime-u-parking}
  A $\u$-parking function $\alpha = (a_1, \dots, a_n)$ is \tdef{prime} if 
  \[
    | \{ j \in [n] : a_j < u_i \} | \ge i \quad \text{for every } u_i > 1.
  \]
\end{definition}

\begin{remark} \label{rmk:prime-parking}
  We have four remarks regarding prime $\u$-parking functions:
  \begin{enumerate}
  \item For $\u = \h(B_{m,n})$ (the case of $(m, n)$-rational Dyck paths), our definition of prime rational parking functions coincides with that of Cai and Yan~\cite{Cai2020}.
    Thus, an inclusion-exclusion formula for counting prime rational parking functions can be found in~\cite[Theorem~3.4]{Cai2020}.
    It can also be seen as an easy exercise of the symbolic method by noticing that a rational parking function is a sequence of prime ones.
  \item For a given $\u = (u_1, \dots, u_n)$, let $i$ be the smallest index such that $u_i > 1$.
    According to~\cite{Kung2003}, the counting formula for prime $\u$-parking functions is given by the Gon\v{c}arov polynomials $g_n$ as
    \[
      (-1)^ng_n(0; u_1, \dots, u_{i-1}, u_i - 1, u_{i+1}, \dots, u_n).
    \]
  \item Armon, Beckford, Hanson, Krawzik,  Mandelshtam,  Martinez, and  Yan~\cite{Armon2025} introduced a different notion of prime $\u$-parking functions, for which a $\u$-parking function $\alpha = (a_1, \dots, a_n)$ is prime if $n = 1$, or $n \geq 2$ with
    \[
      | \{j \in [n] : a_j \le u_i \} | > i \quad \text{for every } i\in [n-1].
    \]
    For instance, $\alpha = (1,1,2,2)$ is a prime $(1,2,2,3)$-parking function in this sense, but not for our \Cref{defn:prime-u-parking}. 
  \item Comparing \Cref{defn:prime-u-parking} and the definition of $\u$-parking functions, for $\u = (u_1, \dots, u_n)$ such that $u_1 > 1$, it is clear that prime $\u$-parking functions are exactly $\u'$-parking functions, where $\u' = (u_1 - 1, \ldots, u_n - 1)$.
  \end{enumerate}
\end{remark}

Analogous to the classical case, we define a prime decomposition procedure for $\u$-parking functions.
Here, an empty parking function is considered as a prime parking function.

\begin{definition}\label{def:pri-dec}
  Let $\alpha = (a_1, \dots, a_n)$ be a $\u$-parking function.
  We distinguish two cases. 
  \begin{enumerate}
  \item $u_1 = 1$ or $\inc(\alpha)_1 < u_1$.
    Let $k$ be the largest value such that the subsequence $\alpha'$ of $\alpha$ with elements strictly smaller than $u_k$ is a prime $(u_1, \ldots, u_k)$-parking function.
    Let $L$ be the index set of $\alpha'$ in $\alpha$ and $m = u_{k + 1} - 1$.
    We check that $(a_i - m)_{i \in [n] \setminus L}$ is an $(1, u_{|L| + 2} - m, \dots, u_n - m)$-parking function.
    Repeating this procedure until getting a prime vector parking function leads to the decomposition of $\alpha$ into a sequence of prime vector parking functions.
  \item $\inc(\alpha)_1 = u_1$ and $u_1 > 1$.
    In this case, we first decompose $\alpha$ into an empty parking function and the modified sequence $\tilde{\alpha} = (a_1 - u_1 + 1, \dots, a_n - u_1 + 1)$. 
    It is clear that $\tilde{\alpha}$ is an $(1, u_2 - u_1 + 1, \dots, u_n - u_1 + 1)$-parking function, which is in the first case and can be thus decomposed into prime vector parking functions.
  \end{enumerate}
\end{definition}

\begin{remark}
  We note that, apart from the first prime vector parking function, every remaining factor and its corresponding vector has minimal entry equal to $1$.
  The reason we split into two cases for whether $u_1 = 1$ is to align the lattice path with the virtual $E$-step preceding its start.
\end{remark}

\begin{figure}
  \centering
  \begin{tikzpicture}[
    box/.style={
      draw, 
      rectangle, 
      thick, 
      align=left, 
      inner sep=6pt, 
      minimum height=0.8cm,
    },
    boxrd/.style={
      draw, 
      rectangle, 
      thick, 
      align=left, 
      inner sep=6pt, 
      fill=red!15,
      minimum height=0.8cm,
    },
    arr/.style={
      ->, 
      >=stealth, 
      thick,
    },
    btxt/.style={
      align=left, 
      inner sep=0pt, 
      font=\large
    }
    ]
    \node[box] (n1) at (0,0) {$\alpha=(\textcolor{red}{3},9,8,11,\textcolor{red}{2},6,11,6,9,\textcolor{red}{2}) \in PF_{(\textcolor{red}{3},\textcolor{red}{4},\textcolor{red}{4},6,8,8,10,11,11,13)}$};
    \node[box, below=1.5cm of n1.center, anchor=center] (n2) {\((4,3,6,\textcolor{red}{1},6,\textcolor{red}{1},4) \in PF_{(\textcolor{red}{1},\textcolor{red}{3},3,5,6,6,8)}\)};
    \node[boxrd, below=1.5cm of n1.center, anchor=center, xshift=5cm] (n3) {\((3,2,2) \in PPF_{(3,4,4)}\)};

    \node[box, below=1.5cm of n2.center, anchor=center] (n4) {\((\textcolor{red}{2},\textcolor{red}{1},4,4,\textcolor{red}{2}) \in PF_{(\textcolor{red}{1},\textcolor{red}{3},\textcolor{red}{4},4,6)}\)};
    \node[boxrd, below=1.5cm of n3.center, anchor=center] (n5) {\((1,1) \in PPF_{(1,3)}\)};

    \node[boxrd, below=1.5cm of n4.center, anchor=center] (n6) {\((1,1) \in PPF_{(1,3)}\)};
    \node[boxrd, below=1.5cm of n5.center, anchor=center] (n7) {\((2,1,2) \in PPF_{(1,3,4)}\)};

    \draw[arr] (n1.south) -- (n2.north);
    \draw[arr] (n1.south) -- (n3.north);

    \draw[arr] (n2.south) -- (n4.north);
    \draw[arr] (n2.south) -- (n5.north);

    \draw[arr] (n4.south) -- (n6.north);
    \draw[arr] (n4.south) -- (n7.north);
  \end{tikzpicture}
  \caption{An example of the prime decomposition of a vector parking function $\alpha$}
  \label{fig:prime-decomp}
\end{figure}

For a $\u$-parking function $\alpha$, its \tdef{prime} statistic, denoted $\pri(\alpha)$, is the number of prime vector parking functions in its prime decomposition, including the empty one when $u_1 > 1$. 
See \Cref{fig:prime-decomp} for an example.
The classical prime decomposition of parking functions is a special case of \Cref{def:pri-dec}. 

\begin{definition}
  Let $\alpha$ be a $\u$-parking function of length $n$. 
  The \tdef{block return} of $\alpha$, denoted $\breturn(\alpha)$, is equal to $\breturn(P_{\alpha})$.
  For a composition $\c$ such that $\c \le_c \vc(P_\u)$, the \tdef{composition run corresponding to $\c$} of $\alpha$ is defined by
  \begin{equation*}
    \run_\c(\alpha) := \max \{ k \in [n] : \{ \c_1, \c_1 + \c_2, \dots, \c_1 + \dots + \c_k\} \subseteq  \{1\} \cup \alpha \},
  \end{equation*}
  or simply $\run_\c(\alpha) = \run_\c(P_{\alpha})$. 
  For the two crucial choices $\c = (1, \dots, 1)$ and $\c = \vc(P_\u)$, we write
  \[
    \run(\alpha) := \run_{(1,\dots,1)}(\alpha)\quad\text{and}\quad\brun(\alpha) := \run_{\vc(P_\u)}(\alpha).
  \]
\end{definition}

\begin{lemma}\label{lem:cpf}
  Let $\alpha$ be a $\u$-parking function. For a composition $\c$ satisfying $\c \le_c \vc(P_\u)$, we have
  \[
    (\pri, \breturn, \run_\c)(\alpha) = (\return, \breturn, \run_\c)(P_\alpha).
  \]
\end{lemma}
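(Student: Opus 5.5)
The equalities for $\breturn$ and $\run_\c$ hold by definition: $\breturn(\alpha)$ is defined as $\breturn(P_\alpha)$. For $\run_\c$, the floor heights of $P_\alpha$ are exactly $\{1\}\cup\alpha$, with $1$ coming from the virtual $E$-step. So the only real content is $\pri(\alpha)=\return(P_\alpha)$, with $B=P_\u$. I will argue by induction on $n$, matching one step of the prime decomposition in \Cref{def:pri-dec} with one return floor of $P_\alpha$.

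\textbf{Case (2): $\inc(\alpha)_1=u_1>1$.} The virtual $E$-step of $P_\alpha$ is at height $1$. The boundary $B$ starts with $N^{u_1-1}$ and then has an $E$-step at height $u_1$. The first real floor of $P_\alpha$ is at height $u_1$, so it overlaps $B$. The virtual floor also counts as a return, since both paths share the virtual step. Shifting everything down by $u_1-1$ produces $P_{\tilde\alpha}$ bounded by the shifted boundary, with the same floors. In that picture the virtual floor merges with the first floor, so exactly one return disappears. This matches the empty prime factor, giving $\pri(\alpha)=1+\pri(\tilde\alpha)$. Case (2) therefore reduces to case (1).

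\textbf{Case (1): $u_1=1$ or $\inc(\alpha)_1<u_1$.} The virtual step at height $1$ is shared with $B$ only when $u_1=1$. I need to be careful with this: when $u_1>1$ but $\inc(\alpha)_1<u_1$, the first return should come from the first real floor, or from the merged virtual floor at height $1$. The key step is then to characterize the maximal $k$ in the definition. The subsequence of entries $<u_k$ being a prime $(u_1,\dots,u_k)$-parking function means the following: for each $i\le k$ with $u_i>1$, at least $i$ entries of $\alpha$ are $<u_i$. Geometrically, $P_\alpha$ stays strictly below $B$ at every $E$-step $i\le k$ of $B$ with $u_i>1$; that is, the $i$-th height of $P_\alpha$ is less than $u_i$. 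Maximality of $k$ means the next $E$-step of $B$ is touched, i.e.\ the $(k+1)$-st smallest entry of $\alpha$ equals $u_{k+1}$. In addition, the prefix $\alpha'$ has exactly $|L|=k$ entries.

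So the first prime factor corresponds to the initial stretch of $P_\alpha$ strictly under $B$, up to the first $E$-step shared with $B$ after the start. That stretch contributes exactly one return floor: the floor at height $1$ containing the shared virtual step, if $u_1=1$. When $u_1=1$, the $E$-steps of height $1$ in $B$ and in $P_\alpha$ lie in the same floor, so this is indeed a single return.

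The remainder is $(a_i-m)_{i\notin L}$ with $m=u_{k+1}-1$. I will check that its path is exactly the part of $P_\alpha$ after $x=k$, translated down by $m$, together with its boundary, which is the tail of $B$ shifted by $m$. The floor of $P_\alpha$ touching $B$ at height $u_{k+1}$ becomes height $1$ and plays the role of the virtual floor of the new path. Returns of the remainder then correspond bijectively to returns of $P_\alpha$ other than the one already counted, and induction finishes the argument.

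\textbf{Main obstacle.} The hardest part is the bookkeeping at the junction between one factor and the next. I need to confirm four things: that the floor touching $B$ at $u_{k+1}$ is counted once, not twice; that the new vector $(1,u_{|L|+2}-m,\dots)$ is exactly the shifted tail of $\h(B)$, which requires $|L|=k$; that $L$ really consists of the smallest $k$ entries; and that the ``virtual step'' convention aligns in both cases. I would first verify these on the example of \Cref{fig:prime-decomp} and then write the general index identities.
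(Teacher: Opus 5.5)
Your route matches the paper's. You peel off one prime factor at a time and match it with one return floor. Case (2) is handled by shifting down by $u_1-1$, so that the bare virtual floor merges with the floor at height $u_1$. Case (1) uses maximality of $k$ to get $\inc(\alpha)_{k+1}=u_{k+1}$, and identifies the remainder with the shifted tail of $P_\alpha$ under the shifted tail of $P_{\u}$. The paper inducts on $\pri(\alpha)$ rather than on $n$, which makes no difference.

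However, one claim in your case (1) is false, and used as stated it breaks the count. The virtual step is shared with $B$ for every $\u$, not only when $u_1=1$. By the paper's convention, all lattice paths, the boundary included, are preceded by the same virtual $E$-step from $(-1,0)$ to $(0,0)$, and $\return$ explicitly counts it. You use exactly this yourself in case (2), where also $u_1>1$.

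Consequently, take case (1) with $u_1>1$ and $\inc(\alpha)_1<u_1$. The return floor attached to the first factor is the height-$1$ floor, which is the virtual step alone when $\inc(\alpha)_1>1$. Your alternative ``the first real floor'' is wrong: for $1<\inc(\alpha)_1<u_1$ that floor lies strictly below $B$. Under your stated claim this subcase contributes no return, and you would get $\return(P_\alpha)=\pri(\alpha)-1$.

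The example of \Cref{fig:prime-decomp} is precisely this subcase. There $\u=(3,4,4,6,8,8,10,11,11,13)$ and $\inc(\alpha)=(2,2,3,6,6,8,9,9,11,11)$, so $\pri(\alpha)=4$. The return floors of $P_\alpha$ are at heights $1,6,8,11$, and the first of these is the bare virtual step, matched with the factor $(3,2,2)$.

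The repair is the uniform statement the paper relies on. The height-$1$ floor is always a return floor. In case (1), no floor of height strictly between $1$ and $u_{k+1}$ shares a step with $B$: steps $i\le k$ with $u_i>1$ have $\inc(\alpha)_i<u_i$, and steps with $u_i=1$ lie in the height-$1$ floor. So the first factor contributes exactly one return whatever $u_1$ is. With that correction, your junction bookkeeping goes through as planned.
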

\begin{proof}
  Let $\alpha = (a_1, \dots, a_n)$ be a $\u$-parking function. It is clear that $\breturn(\alpha) = \breturn(P_\alpha)$ and $\run_\c(\alpha) = \run_\c(P_\alpha)$, as the definitions of $\breturn$ and $\run_\c$ on $\u$-parking functions are defined via its corresponding bounded path.
  We show that $\pri(\alpha) = \return(P_\alpha)$ by induction on $\pri(\alpha)$.
  
  For the base case $\pri(\alpha) = 1$, by \Cref{defn:prime-u-parking}, $\inc(\alpha)_i < u_i$ for all $i$, meaning that the only $E$-step $P_\alpha$ and $P_{\u}$ share is the initial virtual one, thus $\return(P_\alpha) = 1$.
  Now suppose that $\pri(\alpha) = \return(P_\alpha)$ for all $\alpha, \u$ with $\pri(\alpha) < p$ for $p > 1$.
  We now show that the same holds for $\pri(\alpha) = p$.
  We distinguish the two cases in \Cref{def:pri-dec}.

  For the first case, $u_1 = 1$ or $\inc(\alpha)_1 < u_1$ if and only if the first non-virtual $E$-step of $P_\alpha$ does not start a return floor of height greater than $1$.
  Let $k$, $L$ and $m$ as defined in \Cref{def:pri-dec}, which means $k > 1$ by the prime condition of $\alpha'$ extracted from the index set $L$.
  If $k = n$, then $\pri(\alpha) = 1$, which is the base case.
  We have $k < n$, as otherwise $\pri(\alpha) = 1 < p$.
  By the maximality of $k$, we have $\inc(\alpha)_{k + 1} = u_{k + 1}$.
  Thus, $P_\alpha$ shares with $P_{\u}$ its $(k + 1)$-st non-virtual $E$-step, making a return floor.
  The segments of $P_\alpha$ and $P_\u$ starting from this step gives a path $P_{\alpha''}$ bounded by $P_{\u''}$ with $\u'' = (1, u_{k + 2} - m, \dots, u_{n} - m)$, and $\alpha'' = (a_i - m)_{i \in [n] \setminus L}$ a $\u''$-parking function, whose first non-virtual $E$-step does not start a return floor of height greater than $1$.
  Hence, $P_{\alpha''}$ has one less return floor than $P_\alpha$, and $\pri(\alpha) = \pri(\alpha'') + 1$.
  By the induction hypothesis, we have $\pri(\alpha) = \return(P_{\alpha''}) + 1 = \return(P_\alpha)$.

  For the second case, $\inc(\alpha)_1 = u_1 > 1$ if and only if the first non-virtual $E$-step of $P_\alpha$ starts a return floor at height greater than $1$.
  In this case, by \Cref{def:pri-dec}, we decompose $\alpha$ into an empty prime vector parking function and $\alpha'$, where $\alpha'$ is a $\u'$-parking function whose elements are those of $\alpha$ decremented by $u_1 - 1$, with $\u'$ being $\u$ decremented by the same amount.
  It is clear that $\alpha'$ falls in the first case, and $P_{\alpha'}$ (resp. $P_{\u'}$) is $P_\alpha$ (resp. $P_{\u}$) with the initial $N$-steps removed.
  The first non-virtual $E$-step of $P_{\alpha'}$ no longer starts a return floor, hence $\return(P_{\alpha'}) = \return(P_\alpha) - 1$.
  By the induction hypothesis, we have $\return(P_{\alpha'}) = \pri(P_\alpha)$, hence $\pri(\alpha) = \pri(\alpha') + 1 = \return(P_\alpha)$.
\end{proof}

Note that for $\alpha \in \PF_n$, we have $\breturn(\alpha) = \pri(\alpha)$. 
Combining \Cref{thm:Theta} with \Cref{lem:cpf} gives the following symmetry, generalizing \Cref{eq:Li-Lin} from classical parking functions to vector parking functions.

\begin{corollary}
\label{coro:PF}
  Given a weakly increasing vector of positive integers $\u$, we have 
  \begin{equation}
    \sum_{\alpha\in\PF_\u} t^{\run(\alpha)} q^{\breturn(\alpha)}
    = \sum_{\alpha\in\PF_\u} t^{\breturn(\alpha)} q^{\run(\alpha)}.
  \end{equation}
\end{corollary}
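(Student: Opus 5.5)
The plan is to transfer \Cref{thm:Theta} from bounded paths to $\u$-parking functions through the many-to-one map $\alpha \mapsto P_\alpha$. Set $B = P_\u$, so that $\PF_\u$ is partitioned according to the path $P_\alpha \in \P_B$. By \Cref{lem:cpf} with $\c = (1,\dots,1)$ (admissible, since $(1,\dots,1)\le_c \vc(P_\u)$ always holds as all parts of $\vc(P_\u)$ are positive), we have $\run(\alpha) = \run(P_\alpha)$, and by definition $\breturn(\alpha) = \breturn(P_\alpha)$. Therefore
\[
  \sum_{\alpha\in\PF_\u} t^{\run(\alpha)} q^{\breturn(\alpha)} = \sum_{P \in \P_B} w(P)\, t^{\run(P)} q^{\breturn(P)},
\]
where $w(P)$ is the number of $\alpha\in\PF_\u$ with $P_\alpha = P$.

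The key point is that $w(P)$ depends only on $\comp(P)$. By \Cref{lem:parking-as-weighted-path}, $w(P)$ is the multinomial coefficient determined by the composition-type of $P$, with the first part reduced by one to account for the virtual $E$-step. I would therefore refine the sum by composition-type, writing it as
\[
  \sum_{\mathbf{d}} \binom{n}{d_1 - 1, d_2, \ldots, d_k} \sum_{\substack{P\in\P_B\\ \comp(P)=\mathbf{d}}} t^{\run(P)}q^{\breturn(P)}.
\]
It then suffices to show that each inner sum is symmetric in $t$ and $q$.

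For this I invoke the involution $\Theta$ from the proof of \Cref{thm:Theta}. It is a composition of maps $\theta$ and their inverses, and by \Cref{lem:theta2} each of these preserves composition-type, so $\Theta$ restricts to an involution on $\{P\in\P_B : \comp(P)=\mathbf{d}\}$ that exchanges $\run$ and $\breturn$. Summing over $\mathbf{d}$ with the weights above gives the corollary.

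The only delicate point is checking that the weight and the virtual-step convention line up. The path $P_\alpha$ has height sequence $\inc(\alpha)$, and the virtual step contributes to the first floor (of height $1$). So a path whose first floor has length $d_1$ contains $d_1 - 1$ genuine entries equal to $1$, which is exactly the form stated in \Cref{lem:parking-as-weighted-path}. No boundary-constraint hypothesis is needed here, because \Cref{thm:Theta} holds on all of $\P_B$.
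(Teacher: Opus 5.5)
Your proposal is correct and follows the paper's route: transfer \Cref{thm:Theta} to $\PF_\u$ through \Cref{lem:cpf}. You also make explicit a step the paper's one-line justification leaves implicit. Because $\alpha \mapsto P_\alpha$ is many-to-one, you use that $\Theta$ preserves composition-type (\Cref{lem:theta2}), so it respects the multinomial weights of \Cref{lem:parking-as-weighted-path}.
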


A $\u$-parking function $\alpha$ is \tdef{crowded} if its associated 
bounded path $P_\alpha$ is boundary constrained by $P_\u$.
We denote by $\CPF_\u$ the set of crowded $\u$-parking functions.
Combining \Cref{thm:Comp} with \Cref{lem:cpf} leads to another extension of \Cref{eq:Li-Lin}.

\begin{corollary}\label{coro:CPF}
  Let $\u$ be a weakly increasing vector of positive integers.
  For any composition $\c$ satisfying $\c\le_c\vc(P_\u)$, the following symmetry holds:
  \begin{equation}
    \sum_{\alpha\in\CPF_\u} t^{\run_\c(\alpha)} q^{\pri(\alpha)}
    = \sum_{\alpha\in\CPF_\u} t^{\pri(\alpha)} q^{\run_\c(\alpha)}.
  \end{equation}
  As special cases, we have
  \begin{equation}
    \sum_{\alpha\in\CPF_\u} t^{\run(\alpha)} q^{\pri(\alpha)}
    = \sum_{\alpha\in\CPF_\u} t^{\pri(\alpha)} q^{\run(\alpha)},
  \end{equation}
  \begin{equation}
    \sum_{\alpha\in\CPF_\u} t^{\brun(\alpha)} q^{\pri(\alpha)}
    = \sum_{\alpha\in\CPF_\u} t^{\pri(\alpha)} q^{\brun(\alpha)}.
  \end{equation}
\end{corollary}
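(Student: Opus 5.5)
The plan is to transfer \Cref{thm:Comp} from bounded paths to crowded $\u$-parking functions through the map $\alpha \mapsto P_\alpha$, which is many-to-one, keeping track of the fibre sizes. Set $B = P_\u$. By the definition of crowded parking functions, $\alpha \in \CPF_\u$ exactly when $P_\alpha \in \CP_B$. Also, every path $P \in \CP_B$ (indeed every $P \in \P_B$) arises as some $P_\alpha$, since its height sequence is a valid $\inc(\alpha)$. So the sum over $\CPF_\u$ splits as a sum over $P \in \CP_B$, each term weighted by the fibre size $|\{\alpha : P_\alpha = P\}|$. \Cref{lem:cpf} then rewrites the summand: $(\run_\c,\pri)(\alpha) = (\run_\c,\return)(P_\alpha)$. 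This gives
\[
  \sum_{\alpha\in\CPF_\u} t^{\run_\c(\alpha)} q^{\pri(\alpha)} = \sum_{P\in\CP_B} w(P)\, t^{\run_\c(P)} q^{\return(P)},
\]
where $w(P)$ is the multinomial coefficient given by \Cref{lem:parking-as-weighted-path}.

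The key observation is that $w(P)$ depends only on $\comp(P)$. We therefore need a symmetry of the weighted sum, and for that it suffices to have an involution on $\CP_B$ that exchanges $\run_\c$ and $\return$ while preserving composition-type. The involution $\Psi_\c$ of \Cref{thm:Comp} does exactly this. It is built as a power $\psi_\c^{k-\ell}$ or $\overline{\psi_\c}^{\,\ell-k}$, and \Cref{lem:comp2} says each $\psi_\c$ is a composition-type-preserving bijection. Hence $\Psi_\c$ maps $\{P\in\CP_B : \comp(P)=\mathbf{d}\}$ to itself for every composition $\mathbf{d}$, which gives
\[
  \sum_{P\in\CP_B} w(P)\, t^{\run_\c(P)} q^{\return(P)} = \sum_{P\in\CP_B} w(P)\, t^{\return(P)} q^{\run_\c(P)}.
\]
Undoing the rewriting then yields the claimed identity. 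The two special cases follow by taking $\c=(1,\dots,1)$ and $\c=\vc(P_\u)$; both satisfy $\c\le_c\vc(P_\u)$, because $\vc(P_\u)$ has all parts $\ge 1$ and, by the virtual step convention, begins with the part $1$.

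The main point to check is that $\run_\c$ and $\pri$ on $\alpha$ are read from $P_\alpha$ with the same boundary and the same virtual-step convention as in \Cref{thm:Comp}. This is what \Cref{lem:cpf} supplies, so I expect it to hold. The weight bookkeeping in \Cref{lem:parking-as-weighted-path}, where the first part of $\comp(P)$ is reduced by one to account for the virtual step, is still a function of $\comp(P)$ alone, so preserving composition-type is enough.
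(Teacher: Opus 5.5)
Your proposal is correct and follows the same route as the paper, which obtains this corollary by combining \Cref{thm:Comp} with \Cref{lem:cpf}. You also make explicit a step the paper leaves implicit: the fibre sizes from \Cref{lem:parking-as-weighted-path} depend only on $\comp(P)$, so it is the composition-type preservation of $\Psi_\c$ (from \Cref{lem:comp2}) that lets the path symmetry carry over to the weighted sum over $\CPF_\u$.
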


\section{The distribution of $(\run,\pri)$ on $(a,b)$-parking functions} \label{sec6}

This section focuses on refined enumeration on a special class of vector parking functions called the \emph{$(a, b)$-parking functions}.
They generalize the classical parking functions and have been investigated extensively from the enumerative aspect~\cite{Eu2010,Kung2003,Stanley2018,WYY}.
Many enumerative results and combinatorial correspondences for classical  parking functions can be extended to $(a, b)$-parking functions, see~\cite[Sec.~13.4.4]{Yan2014}.
The main result of this section is an explicit expression of the exponential generating function of $(a, b)$-parking functions refined by $(\run, \pri)$, which is new even for classical parking functions.

Given two positive integers $a$ and $b$, an \tdef{$(a, b)$-parking function} of length $n$ is a $\u$-parking function with $\u \equiv \u(a, b, n) = (a, a + b, \dots, a + (n - 1) b)$.
Let $\PF_n(a, b)$ denote the set of $(a, b)$-parking functions of length $n$. 
Because $P_{\u(a, b, n)}$ has no double $E$-steps, \Cref{prop:bounded} implies $\CPF_{\u(a, b, n)} = \PF_n(a, b)$, which means \Cref{coro:CPF} applies to $\PF_n(a, b)$ as follows.

\begin{corollary}
  \label{coro:abPF}
  If $\c$ is a composition of $n$ with $|\c|\ge n$, $c_1\le a$, and $c_i\le b$ for all $i=2,\dots,n$, then
  \begin{equation}
    \sum_{\alpha\in\PF_n(a,b)} t^{\run_\c(\alpha)} q^{\pri(\alpha)}
    = \sum_{\alpha\in\PF_n(a,b)} t^{\pri(\alpha)} q^{\run_\c(\alpha)}.
  \end{equation}
\end{corollary}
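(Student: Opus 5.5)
The plan is to reduce the statement to \Cref{coro:CPF} applied with $\u = \u(a,b,n) = (a, a+b, \dots, a+(n-1)b)$. Two hypotheses of that corollary must be checked: that every $(a,b)$-parking function is crowded, and that the given $\c$ satisfies $\c \le_c \vc(P_\u)$.

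\textbf{Step 1: every $(a,b)$-parking function is crowded.} This was already observed just before the statement. The boundary $P_{\u}$ has height sequence $\u$, and consecutive entries of $\u$ differ by $b \ge 1$, so no two $E$-steps of $P_\u$ lie at the same height. Hence $P_\u$ has no two consecutive $E$-steps. By \Cref{prop:bounded} we get $\CP_{P_\u} = \P_{P_\u}$, which means $\CPF_{\u(a,b,n)} = \PF_n(a,b)$.

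\textbf{Step 2: compute the vertical composition-type of the boundary.} With the virtual initial $E$-step at height $1$, the floor heights of $P_\u$ are
\[
  \fh(P_\u) = (1, a, a+b, \dots, a+(n-1)b) \quad \text{if } a > 1,
\]
and $(1, 1+b, \dots)$ if $a = 1$, where the virtual step merges with the first floor. Consequently
\[
  \vc(P_\u) = (1, a-1, b, b, \dots, b) \quad \text{if } a > 1, \qquad \vc(P_\u) = (1, b, \dots, b) \quad \text{if } a = 1.
\]

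\textbf{Step 3: check $\c \le_c \vc(P_\u)$.} This is the only delicate step, because the indexing in the hypotheses ($c_1 \le a$, then $c_i \le b$) has to be matched with $\vc(P_\u)$ and with the requirement that $\c$ be at least as long as $\vc(P_\u)$. Since every height sequence is preceded by the virtual step of height $1$, a composition run is only meaningful when its first part is $1$ (in the convention of \Cref{thm:ration2}, $\c \vDash n+1$ with $c_1 = 1$). I would therefore read the hypothesis as describing the parts after this leading $1$: the next part is at most $a-1$ when $a>1$ (equivalently, the partial sum is at most $a$), and every later part is at most $b$. Under this reading the componentwise inequalities against $(1, a-1, b, \dots, b)$ hold entrywise. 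The length condition $|\c| \ge n$ guarantees that $\c$ has at least as many parts as $\vc(P_\u)$, which is what the definition of $\le_c$ requires. The case $a = 1$ goes the same way with the shorter vector $\vc(P_\u)$.

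\textbf{Step 4: conclude.} With both hypotheses verified, \Cref{coro:CPF} applies to $\u = \u(a,b,n)$ and gives the stated symmetry of $(\run_\c, \pri)$ on $\PF_n(a,b)$.

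The main obstacle is the bookkeeping in Step 3, namely reconciling the statement's indexing of $\c$ with the leading part $1$ contributed by the virtual step. The rest is a direct citation of earlier results.
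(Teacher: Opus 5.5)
Your Steps 1, 2 and 4 are exactly the paper's argument: \Cref{prop:bounded} gives $\CPF_{\u(a,b,n)}=\PF_n(a,b)$, and then \Cref{coro:CPF} applies. Your formula for $\vc(P_\u)$ is also right. The gap is Step 3. For $a>1$ the printed hypothesis does not imply $\c\le_c\vc(P_\u)$, and no reading can make it do so, because the statement as printed is false.

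Indeed, $\c\le_c(1,a-1,b,\dots,b)$ means that $\c$ is a composition of $a+(n-1)b$ with at least $n+1$ parts, $c_1=1$, $c_2\le a-1$, and $c_i\le b$ for $3\le i\le n+1$. The printed conditions instead compare $\c$ with $(a,b,\dots,b)$, which is $\vc(P_\u)$ with its first two parts merged (the virtual step forgotten). They also allow $\c$ to have only $n$ parts. In that case $\run_\c\le n$. But $\alpha=\u$ has $\pri(\alpha)=\return(P_\u)=n+1$, since all $n+1$ floors of $P_\u$ are returns. So $\sum t^{\run_\c}q^{\pri}$ contains a monomial of $q$-degree $n+1$ and none of $t$-degree $n+1$, and cannot be symmetric. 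The smallest case is $a=2$, $b=1$, $n=1$: on $\PF_1(2,1)=\{(1),(2)\}$ one has $\pri$ values $1,2$ while $\run_\c\le 1$ for $\c=(1)$ or $\c=(2)$; for $\c=(1)$ the sum is $tq+tq^2$.

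So Step 3 is not a verification but a silent change of hypothesis, and the change is not described consistently either. If the printed parts were those following a prepended $1$, then $c_1\le a$ would bound the next part by $a$, not by $a-1$. Your parenthetical switches to reading the printed $c_1$ as the partial sum $1+c_2$, which presupposes $c_1\ge 2$. The size of $\c$ (``composition of $n$'') is never reconciled with the size $a+(n-1)b$ that $\le_c$ requires.

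You should say openly that the corollary is meant, and true, for $\c\le_c\vc(P_{\u(a,b,n)})$. For $a>1$ this is the shifted list above. For $a=1$ it is the printed condition with ``composition of $n$'' replaced by ``composition of $1+(n-1)b$''. With that hypothesis, your Steps 1, 2, 4 are a complete proof, identical to the paper's.

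A minor point in Step 1: for $a=1$ the virtual step merges with the first real $E$-step, so $\comp(P_\u)=(2,1,\dots,1)$ and the proof of \Cref{prop:bounded} does not apply verbatim. The conclusion still holds, because every path bounded by $P_\u$ must also begin with an $E$-step. Its first floor therefore has length at least $2$, and \eqref{eq:boundary-constrain} is then immediate.
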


The counting formula of $(a, b)$-parking functions was given in \cite[Equation~(7)]{Stanley2002}, which can be shown using Pollak's argument \cite{pollak} (see the comment after Theorem~11 in \cite{Stanley2002}):
\begin{equation}\label{eq:PF}
  |\PF_n(a,b)| = a (a + bn)^{n - 1}.
\end{equation}
On the other hand, the number of prime parking functions of length $n$ is $(n - 1)^{n - 1}$, a result first proven algebraically by Gessel and later combinatorially by Kalikow (see~\cite[Ex.~5.49(b)]{Stanley1999}).
We now extend Gessel's result to prime $(a, b)$-parking functions.
Let $\PPF_n(a,b)$ denote the set of prime $(a, b)$-parking functions of length $n$.
\begin{theorem}\label{thm:prime:ab}
  For integers $a, b > 0$, we have:
  \begin{equation}\label{eq:pri}
    |\PPF_n(a,b)| =
    \begin{cases}
      (a - 1) (a - 1 + bn)^{n - 1} &\text{ for } a > 1,\\
      (bn - 1)^{n - 1} &\text{ for } a = 1.
    \end{cases}
  \end{equation}
\end{theorem}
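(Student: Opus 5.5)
For $a>1$, I would use \Cref{rmk:prime-parking}(4) directly. Since $u_1=a>1$, a prime $\u(a,b,n)$-parking function is exactly a $\u'$-parking function with $\u'=(a-1,a-1+b,\dots,a-1+(n-1)b)=\u(a-1,b,n)$. So $\PPF_n(a,b)=\PF_n(a-1,b)$, and \eqref{eq:PF} with $a-1\ge 1$ in place of $a$ gives $(a-1)(a-1+bn)^{n-1}$ at once.

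For $a=1$, the vector $\u=(1,1+b,\dots,1+(n-1)b)$ has $u_1=1$. The condition of \Cref{defn:prime-u-parking} is imposed only for $i\ge 2$: we need $|\{j: a_j< 1+(i-1)b\}|\ge i$, that is $|\{j:a_j\le (i-1)b\}|\ge i$, for $i=2,\dots,n$. The $i=1$ condition $|\{j:a_j\le 1\}|\ge 1$ still comes from $\alpha$ being a $\u$-parking function. In sorted form, $\beta=\inc(\alpha)$ must satisfy $\beta_1=1$ and $\beta_i\le (i-1)b$ for $i\ge 2$.

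I would then run a Pollak-type circular argument, which is the classical route for Gessel's $(n-1)^{n-1}$ and the one cited for \eqref{eq:PF}. Work in $\mathbb{Z}_{bn-1}$, or equivalently use the cyclic lemma on lattice paths. The expected count $(bn-1)^{n-1}$ suggests this plan: take sequences in $[bn-1]^n$ and act by the diagonal shift $\alpha\mapsto\alpha+\mathbf{1}\pmod{bn-1}$. Each orbit should contain exactly one prime $(1,b)$-parking function up to a correction factor. The natural guess is that, among the $bn-1$ shifts of any $\alpha\in\mathbb{Z}_{bn-1}^n$, exactly $n$ are prime, counted with the right multiplicity so that $|\PPF_n(1,b)|=(bn-1)^n\cdot n/(bn-1)\cdot\frac1n$. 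This needs checking against small cases: for $n=1$, $(b-1)^0=1$ and the only prime function is $(1)$; for $b=1$, $n=2$ one gets $1$, namely $(1,1)$, and indeed $(1,2)$ fails. The clean statement I would aim for is that each orbit of size $bn-1$ contains exactly one prime function.

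The intended mechanism for uniqueness is to place $bn-1$ spots on a circle and count, for each arc, the excess of entries over $b$ times the length. The total count $n$ versus $(bn-1)/b$ gives a positive total drift, and the usual "lowest point" argument picks a unique rotation satisfying all the strict prefix inequalities. The main obstacle is getting this cycle-lemma normalization exactly right, since the first entry forced to $1$ and the strict inequality shift the thresholds.

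If the cycle lemma resists, my fallback is generating functions. By \Cref{lem:cpf}, a $(1,b)$-parking function factors as a sequence of primes. Each later factor, after the shift by $m$, is again a $(1,b)$-type prime on a reduced vector, because heights along $B=P_{\u(1,b,n)}$ are periodic with period $b$. This gives $F=1/(1-G)$ with $F(x)=\sum_n (1+bn)^{n-1}x^n/n!$. I would then solve for $G$ by Lagrange inversion, using the tree-function form of $\sum(1+bn)^{n-1}x^n/n!$, and read off $(bn-1)^{n-1}$.
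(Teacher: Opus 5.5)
Your $a>1$ case is exactly the paper's argument: \Cref{rmk:prime-parking}(4) gives $\PPF_n(a,b)=\PF_n(a-1,b)$, and \eqref{eq:PF} finishes.

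\textbf{Comparison of routes for $a=1$.} The paper uses what you list as your fallback. From \Cref{thm:parking-gf},
\[
PF_{1,b}(x,1,t)=\frac{A_b}{t+(1-t)A_b}=\frac{1}{1-t\bigl(1-A_b^{-1}\bigr)},
\]
so the exponential generating function of prime $(1,b)$-parking functions is $1-A_b(x)^{-1}$. The remark in the paper phrases this as $PF_{1,b}(x,1,0)$, which literally equals $1$; the quantity actually needed is the coefficient of $t^1$. Lagrange inversion on $A_b=\exp(xA_b^b)$ then gives $n![x^n]A_b^{c}=c(c+bn)^{n-1}$, hence $(bn-1)^{n-1}$ at $c=-1$. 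Your relation $G=1-1/F$ is the same computation.

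Your primary route is genuinely different. It is a Pollak-style orbit count, the same mechanism the paper invokes for \eqref{eq:PF}. It is self-contained and bijective in spirit, and needs neither \eqref{eq:PF} nor the ballot-path generating function. The paper's route instead obtains the prime count as a by-product of the full $(\run,\pri)$ refinement.

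\textbf{Completing the cycle-lemma route.} The route goes through, but you left its key step open. Put $M=bn-1$ and $N_k=|\{j:a_j\le k\}|$. Because $N_k$ is weakly increasing, the prime conditions $\beta_1=1$ and $\beta_i\le (i-1)b$ for $2\le i\le n$ are together equivalent to $bN_k>k$ for every $1\le k\le M$.
\begin{enumerate}
\item For one direction, take $i=\lfloor k/b\rfloor+1\in[n]$.
\item For the other, take $k=1$ and $k=(i-1)b\le M$.
\end{enumerate}
In particular, every prime function lies in $[M]^n$.

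Now let $m_k$ be the multiplicity of $k$ in $\alpha$. The condition says that the walk with integer steps $bm_k-1$, $k=1,\dots,M$, has all partial sums positive. These steps sum to $bn-M=1$, so by Raney's cycle lemma exactly one rotation of $(m_1,\dots,m_M)$ has this property. Rotating the multiplicity vector is the same as the diagonal shift of $\alpha$ modulo $M$, and that shift acts freely. Hence each orbit contains exactly one prime function, and the count is $M^n/M=(bn-1)^{n-1}$. The degenerate case $b=n=1$, where $M=0$, is checked by hand.

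\textbf{Two corrections.}
\begin{enumerate}
\item The quantity to track on an arc is $b$ times the number of entries minus the arc length, not the number of entries minus $b$ times the length.
\item Exactly one of the $bn-1$ shifts is prime, not $n$. There is no multiplicity correction, so the sentence with the factor $n/(bn-1)\cdot\frac1n$ should be dropped.
\end{enumerate}
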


In the following, we are interested in the more general problem of enumerating $(a, b)$-parking functions refined by $\run$ and $\pri$. We define the exponential generating function of $(a, b)$-parking functions by
\[
  PF_{a,b}(x, q, t) := \varepsilon_a + \sum_{n \ge 1} \frac{x^n}{n!} \sum_{\alpha \in \PF_n(a,b)} q^{\run(\alpha)} t^{\pri(\alpha)}.
\]
Here, $\varepsilon_a$ stands for the weight of the empty $(a, b)$-parking function, with the convention $\varepsilon_a = 1$ when $a = 1$, and $\varepsilon_a = qt$ otherwise.
Using the first-passage decomposition of extended $b$-ballot paths, which corresponds to the prime decomposition of $(a, b)$-parking functions, we obtain the following expressions for $PF_{a,b}(x, q, t)$, which imply \Cref{eq:PF,thm:prime:ab}.

\begin{theorem} \label{thm:parking-gf}
  For integers $a, b > 0$, we have the following expression:
  \begin{equation}
    \label{eq:parking-gf}
    PF_{a, b}(x, q, t) = \frac{\varepsilon_{a}\, A_{b}(x)}{D_{b}(x, q, t)}\left(A_{b}(x)^{a-1}+ \mathbf{1}_{\{a\le 2\}}\frac{(1-q)(1-t)(A_{b}(x)-1)}{1-\mathbf{1}_{\{b=1\}}xqt}\right).
  \end{equation}
  where 
  \[
    A_{b}(x) := PF_{1, b}(x, 1, 1),
    \quad
    D_{b}(x, q, t) := (q + (1 - q) A_{b}(x))(t + (1 - t) A_{b}(x)).
  \]
  Here, $A_{b}(x)$ satisfies the functional equation
  \begin{equation}
    \label{eq:parking-gf-lambert}
    A_{b}(x) = \exp\left(xA_{b}(x)^b \right).
  \end{equation}
\end{theorem}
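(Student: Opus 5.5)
The plan is to work entirely on the bounded-path side, with boundary $B=P_{\u(a,b,n)}$. We weight each path $P\in\P_B$ by the multinomial of \Cref{lem:parking-as-weighted-path} divided by $n!$. By \Cref{lem:cpf} this turns $PF_{a,b}(x,q,t)$ into a weighted sum of $q^{\run(P)}t^{\return(P)}$. The essential point is that this weighting is multiplicative under concatenation of path segments. If $P$ is cut at the $E$-steps where it touches $B$ (its return floors), the boundary after a cut is again of the shape $P_{\u(1,b,\cdot)}$ shifted. The multinomial weight factors as a product, exactly as in a labelled product of species.

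First I would settle the unrefined series. The formula $|\PF_n(1,b)|=(1+bn)^{n-1}$ from \eqref{eq:PF}, together with Lagrange inversion, gives $A_b=\exp(xA_b^b)$ in \eqref{eq:parking-gf-lambert}. Similarly $PF_{a,b}(x,1,1)=A_b^a$, since $a(a+bn)^{n-1}$ is the coefficient of $x^n/n!$ in $e^{ax A^b}=A^a$ by the standard Abel/Lagrange identity. Next, I would cut every $(1,b)$-path at its returns (the first-passage decomposition). This writes $A_b=1/(1-\Pi)$, where $\Pi=1-1/A_b$ is the EGF of nonempty prime $(1,b)$-pieces, each contributing exactly one return. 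Tracking $t$ then gives $\sum t^{\pri}=1/(1-t\Pi)$ for $a=1$. I note that $t+(1-t)A_b=A_b(1-t\Pi)$ and likewise $q+(1-q)A_b=A_b(1-q\Pi)$. Hence $A_b/D_b=1/\bigl(A_b(1-q\Pi)(1-t\Pi)\bigr)$, which is the shape I aim to reach.

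Second, I would add the variable $q$. Here I would use the mechanism of \Cref{thm:Phi} and \Cref{coro:hankel}: $\run$ and $\return$ play interchangeable roles, and only the run-return floors obstruct a clean product. For a prescribed $r=\rr(P)$, these floors form an initial consecutive block by \Cref{lem:phi}. The involution $\phi$ moves one unit between $\run$ and $\return$ while fixing $r$ and the multinomial weight, since it preserves composition type. So the joint series for fixed $r$ is the $t$-only series with $t^{\ell}$ replaced by a symmetric sum over all $(k,\ell)$ with $k+\ell$ fixed. Concretely, if $f_r(s)=\sum_{m}w_{r,m}s^m$ records $\run+\return=m$ at fixed $r$, then the $(q,t)$-series is $\sum_m w_{r,m}\sum_{k+\ell=m,\,k,\ell\ge r}q^kt^\ell$. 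Equivalently, it is obtained by the divided-difference operator $s^m\mapsto (qt)^r\frac{q^{m-2r+1}-t^{m-2r+1}}{q-t}$. Applying this operator to the geometric series $1/(1-s\Pi)$, together with the initial factor $A_b^{a-1}$ from the first prime piece of an $(a,b)$-path, should produce the product $1/((1-q\Pi)(1-t\Pi))$. This accounts for the main term $\varepsilon_aA_b^{a}/D_b$.

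The third step, which I expect to be the main obstacle, is the correction term $\mathbf 1_{\{a\le 2\}}(1-q)(1-t)(A_b-1)/(1-\mathbf 1_{\{b=1\}}xqt)$. It comes from paths with $\rr\ge 1$ beyond the forced virtual floor. A floor can be counted by both $\run$ and $\return$ only if it sits at a height $1,2,\dots$ and touches $B$. This forces $a\le 2$ for the first real floor, and it forces $b=1$ for consecutive such floors to continue: each further one is a single $E$-step contributing $xqt$, hence the geometric factor. I would enumerate these paths by splitting off the maximal initial block of run-return floors. The remainder after the block is an arbitrary $(1,b)$-path, which supplies $A_b-1$ or $A_b$. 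I would then recombine this with the main term via the identity $q+(1-q)A=A(1-q\Pi)$, so that the $(1-q)(1-t)$ prefactor appears. The delicate part is checking that the terms with $r=0$ and $r\ge1$ are not double counted under the divided-difference operator, together with the boundary cases $a=1$ versus $a=2$ and the convention for $\varepsilon_a$. Finally, setting $q=1$ and $t=0$ in the coefficient extraction recovers \Cref{thm:prime:ab} and \eqref{eq:PF} as consistency checks.
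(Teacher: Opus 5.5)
Your first step is fine, but the proof breaks at the second step. There you apply the right mechanism to the wrong series. By \Cref{lem:phi2}, at fixed $r=\rr$ every cell $\CP^r_B(k,m-k)$ with $r\le k\le m-r$ carries the same weight $w_{r,m}$. So the divided-difference operator must be fed $\sum_m w_{r,m}s^m$, the weight of a \emph{single} cell on each anti-diagonal. The series $1/(1-s\Pi)$ is something else: it is the marginal of $\return$, namely $\sum_{r,m}w_{r,m}(t^r+\dots+t^{m-r})$. Nothing in the cut at returns identifies $w_{0,m}$ with $\Pi^m$. In fact $\rr\ge1$ for every nonempty path, because the virtual floor sits at height $1$ on $B$. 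Nor is the first prime piece a neutral factor $A_b^{a-1}$, since $\run$ is read inside it. The identity $\sum_m\Pi^m h_m(q,t)=1/((1-q\Pi)(1-t\Pi))$, with $h_m(q,t)=\sum_{k+\ell=m}q^kt^\ell$, is correct. However, its output specializes at $q=1$ to $A_b/(1-t\Pi)$, not to the marginal $1/(1-t\Pi)$ you started from. It also contradicts your own first step. Since $D_b=A_b^2(1-q\Pi)(1-t\Pi)$, the target main term is $\varepsilon_aA_b^a/D_b=\varepsilon_aA_b^{a-2}/((1-q\Pi)(1-t\Pi))$. You obtain $A_b^{a-1}/((1-q\Pi)(1-t\Pi))$ instead, which is off by a factor $A_b/\varepsilon_a$.

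The third step treats the main term and correction in \eqref{eq:parking-gf} as a split of paths by extra run-return floors. That reading is false; the split is purely algebraic. For $a=1$, the coefficient of $x$ in $\varepsilon_1A_b/D_b=(1-\Pi)/((1-q\Pi)(1-t\Pi))$ is $q+t-1$, so this term is not the generating function of any class of paths. Also, for $a=1$ and $b\ge2$ the boundary has no floor at height $2$, so every nonempty path has $\rr=1$. Yet the correction term $(1-q)(1-t)\Pi/((1-q\Pi)(1-t\Pi))$ is nonzero. So the correction cannot come from ``splitting off the maximal initial block of run-return floors''. Since the recombination is only asserted, the formula is not proved.

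Your involution idea can be repaired. Write $G_r$ for the series of paths with $\rr=r$, and set $W_r=[t^r]G_r(1,t)$ and $S_r(t)=t^rW_r-(1-t)G_r(1,t)$. Then \Cref{lem:phi2} gives $G_r(q,t)=\bigl(t^rS_r(q)-q^rS_r(t)\bigr)/(q-t)$. When $\rr\equiv1$, i.e.\ $a\ge3$, or $a=1$ with $b\ge2$, the return marginal alone suffices. For example, for $a\ge3$, $G(1,t)=tA_b^{a-1}/(1-t\Pi)$ yields $qtA_b^a/D_b$. For $a\le2$ or $b=1$, however, you need the return marginal refined by $\rr$, and that requires a first-step decomposition anyway.

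The paper does this directly with the first-passage decomposition $P=U^kDP^{(bk-1)}D\cdots DP^{(0)}$ of the $b$-ballot path. It reads $\run$ on $P^{(bk-1)}$ and $\return$ on $P^{(0)}$, which gives $PF_{1,b}=1+qt(A_b-1)PF_{1,b}(x,q,1)PF_{1,b}(x,1,t)/A_b^2$. When $b=k=1$ the two pieces coincide, which adds the term $xqt\,PF_{1,1}$. The equation is solved through its $q=1$ specialization, and the cases $a=2$ and $a\ge3$ follow by first visits to the lower heights.
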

\begin{proof}
  By Lemma~\ref{lem:parking-as-weighted-path}, vector parking functions of length $n$ are in bijection with $E$-step-labeled bounded lattice paths, where labels of $E$-steps on the same floor are increasing.
  We adopt the convention that $b$ $N$-steps are appended to the end of these paths, which is compatible with the prime decomposition and does not affect the statistics.
  For $(a,b)$-parking functions, the corresponding bounded lattice paths are in bijection with certain \tdef{$b$-ballot paths}: $E$-steps (other than the virtual) are mapped to \tdef{up steps} $U = (1,b)$, $N$-steps to \tdef{down steps} $D = (1,-1)$, starting at $(0, a - 1)$ and ending on the $x$-axis while staying above it; see \Cref{fig:ballot-path} for an example.
  We denote by $\ballot_{a, b}$ the set of such $b$-ballot paths, including the empty path when $a = 1$.
  
  By \Cref{lem:cpf}, $\run$ and $\pri$ on $(a, b)$-parking functions are translated to $\run$ and $\return$ on bounded lattice paths.
  More precisely, on paths in $\ballot_{a, b}$, $\run$ now counts up step runs before the first double down steps, plus an extra $1$ when the path starts with a down step, and $\return$ counts up steps starting from the $x$-axis, plus an extra $1$ when $a > 1$.
  By abuse of notation, we still use $\run$ and $\return$ for the corresponding statistics on $b$-ballot paths. 

  \begin{figure}
    \centering
    \begin{tikzpicture}[
      scale=0.45,
      >=stealth,
      grid/.style={gray!60, thin},
      boundary/.style={},
      pathA/.style={blue!60, very thick},
      pathB/.style={very thick},
      highlight/.style={red, dotted, very thick}
      ]

      \begin{scope}
        \draw[grid] (0,0) grid (7,17);
        \draw[boundary] (0,0) rectangle (7,17);
        \draw[pathA]
        (0,0) -- (0,3) -- (1,3) -- (1,5) -- (2,5)
        -- (2,7) -- (3,7) -- (3,9) -- (4,9) -- (4,11)
        -- (5,11) -- (5,13) -- (6,13) -- (6,15) -- (7,15);

        \draw[pathB]
        (-1,0) -- (0,0) -- (0,1) -- (1,1) -- (1,2) -- (2,2) -- (2,7) -- (3,7) -- (3,7) -- (4,7) -- (4,8) -- (5,8) -- (5,9) -- (6,9) -- (6,12) -- (7,12) -- (7,15)--(7,17);
        \fill[red] (0,1) circle (5pt);    \fill[red] (2,6) circle (5pt); \fill[red] (2,7) circle (5pt);
      \end{scope}

      \begin{scope}[xshift=7.8cm, yshift=4cm]
        \draw[<->, thick] (0,0) -- (1.5,0) node[midway,above] {};
      \end{scope}

      \begin{scope}[xshift=10cm]
        \draw[grid] (0,0) grid (24,8);
        \draw[boundary] (0,0) rectangle (24,8);
        \draw[pathA]
        (0,0) -- (24,0);
        \draw[pathB]
        (0,3)--(1,2) -- (2,4) -- (3,3) -- (4,5) -- (9,0) -- (11,4) -- (12,3) -- (13,5) -- (14,4) -- (15,6) -- (18,3) -- (19,5) -- (24,0);
        \fill[red] (1,2) circle (5pt);   \fill[red] (8,1) circle (5pt);   \fill[red] (9,0) circle (5pt);
      \end{scope}

      \begin{scope}[xshift=16cm, yshift=12cm]
        \node at (4, 0) {$8\,3\,2\,9\,13\,8\,10\,\in \PF_{7}(4, 2)$};
        \draw[->, thick] (-3,0) -- (-6,0) node[midway,above] {};
      \end{scope}

    \end{tikzpicture}
    \caption{From an $(a,b)$-parking function to a $b$-ballot path starting at $(0,a-1)$ with $a=4$ and $b=2$, where both paths decompose at each red node. }
    \label{fig:ballot-path}
  \end{figure}

  For convenience, we write $A \equiv A_{b}(x)$.
  We first consider the case $a = 1$.
  Suppose that $b > 1$.
  For a $b$-ballot path $P \in \ballot_{1, b}$, if it is not empty, it starts with an up step.
  Let $k$ be the number of initial up steps in $P$.
  We can then decompose $P$ by its first visit to height $bk - 1, bk - 2, \ldots, 1, 0$ as $P = U^k D P^{(bk - 1)} D P^{(bk - 2)} D \cdots P^{(1)} D P^{(0)}$.
  By the definitions of $\run$ and $\return$, we have $\run(P) = 1 + \run(P^{(bk - 1)})$ and $\return(P) = 1 + \return(P^{(0)})$.
  Note that as $b > 1$, the two subpaths $P^{(0)}$ and $P^{(bk - 1)}$ are distinct.
  Taking into account the weight $1 / k!$ for the $k$ initial up steps of $P$, we have
  \begin{align}
    \begin{split} \label{eq:parking-gf-a-1-exp}
      PF_{1, b}(x, q, t) &= 1 + \sum_{k \geq 1} \frac{qt x^k}{k!}  PF_{1, b}(x, q, 1)\,A^{bk - 2}\,PF_{1, b}(x, 1, t) \\
                        &= 1 + qt \left( \exp\left( xA^b \right) - 1 \right) \frac{PF_{1, b}(x, q, 1)\, PF_{1, b}(x, 1, t)}{A^{2}}.
    \end{split}
  \end{align}
  Substituting $q = t = 1$ in \Cref{eq:parking-gf-a-1-exp} leads to \Cref{eq:parking-gf-lambert}.
  Substituting \Cref{eq:parking-gf-lambert} back into \Cref{eq:parking-gf-a-1-exp} for the exponential expression, we have
  \begin{equation}
    \label{eq:parking-gf-a-1-frac}
    PF_{1, b}(x, q, t) = 1 + qt (A - 1) \frac{PF_{1, b}(x, q, 1)\, PF_{1, b}(x, 1, t)}{A^{2}}.
  \end{equation}
  Substituting $q = 1$ in \Cref{eq:parking-gf-a-1-frac} leads an equation involving only $PF_{1, b}(x, 1, t)$ and $A$, whose solution for $PF_{1, b}(x, 1, t)$ is
  \begin{equation}
    \label{eq:parking-gf-a-1-t}
    PF_{1, b}(x, 1, t) = \frac{A}{(1 - t) A + t}.
  \end{equation}
  We observe in \Cref{eq:parking-gf-a-1-frac} that $PF_{1, b}(x, q, t)$ is symmetric in $q, t$, thus
  \begin{equation}
    \label{eq:parking-gf-a-1-q}
    PF_{1, b}(x, q, 1) = \frac{A}{(1 - q) A + q}.
  \end{equation}
  Substituting \Cref{eq:parking-gf-a-1-t,eq:parking-gf-a-1-q} into \Cref{eq:parking-gf-a-1-frac}, we get the final expression of $PF_{1, b}(x, q, t)$ in \Cref{eq:parking-gf}.

  When $b = 1$, the two subpaths $P^{(0)}$ and $P^{(bk - 1)}$ may coincide when $k = 1$.
  By isolating this case, we have
  \begin{align}
    \begin{split} \label{eq:parking-gf-a-1-b-1-exp}
      PF_{1, 1}(x, q, t)
      &= 1 + \sum_{k \geq 2} \frac{qt x^k}{k!} PF_{1, 1}(x, q, 1) A^{k - 2} PF_{1, 1}(x, 1, t) + qtx PF_{1, 1}(x, q, t) \\
      &= 1 + qt (\exp(xA) - 1 - xA) \frac{PF_{1, 1}(x, q, 1) PF_{1, 1}(x, 1, t)}{A^{2}} + qtx PF_{1, 1}(x, q, t).
    \end{split}
  \end{align}
  Note that the substitution $q = t = 1$ in \Cref{eq:parking-gf-a-1-b-1-exp} leads to \Cref{eq:parking-gf-lambert} with $b = 1$, hence \Cref{eq:parking-gf-lambert} still holds for $b = 1$.
  Substituting \Cref{eq:parking-gf-lambert} into \Cref{eq:parking-gf-a-1-b-1-exp} leads to
  \begin{equation}
    \label{eq:parking-gf-a-1-b-1-frac}
    PF_{1, 1}(x, q, t) = 1 + qt (A - 1 - xA) \frac{PF_{1, 1}(x, q, 1) PF_{1, 1}(x, 1, t)}{A^{2}} + qtx PF_{1, 1}(x, q, t).
  \end{equation}
  Substituting $q = 1$ in \Cref{eq:parking-gf-a-1-b-1-frac} leads to an equation involving only $PF_{1, 1}(x)$ and $PF_{1, 1}(x, 1, t)$, of which \Cref{eq:parking-gf-a-1-t} is a solution with $b = 1$.
  We thus know that \Cref{eq:parking-gf-a-1-t} and by symmetry \Cref{eq:parking-gf-a-1-q} hold for $b = 1$.
  Substituting \Cref{eq:parking-gf-a-1-t,eq:parking-gf-a-1-q} into \Cref{eq:parking-gf-a-1-b-1-frac} and solving for $PF_{1, 1}(x, q, t)$, we get the expression of $PF_{1, 1}(x, q, t)$ in \Cref{eq:parking-gf}.

  We now consider the case $a = 2$, given by $b$-ballot paths $P \in \ballot_{2, b}$.
  We have two cases:
  \begin{itemize}
  \item $P$ starts with a down step, in which case $P = D P'$ with $P' \in \ballot_{1, b}$. We have $\run(P) = 1 + \run(P')$ and $\return(P) = 1 + \return(P')$, with the extra $1$ for $\run(P)$ due to $P$ starting with a down step, and that for $\return(P)$ due to $a > 1$;
  \item $P$ starts with an up step, in which case $P = P_1 D P_2$ with $P_1, P_2 \in \ballot_{1, b}$ and $P_1$ is non-empty. We have $\run(P) = \run(P_1)$ and $\return(P) = 1 + \return(P_2)$, the former due to $P_1$ being non-empty thus ending with a down step, and the latter due to $a > 1$.
  \end{itemize}
  Summing up the contribution of both cases, we have
  \begin{equation}
    \label{eq:parking-gf-a-2-expr}
    PF_{2, b}(x, q, t) = qt PF_{1, b}(x, q, t) + t (PF_{1, b}(x, q, 1) - 1) PF_{1, b}(x, 1, t).
  \end{equation}
  The $a = 1, b > 1$ and $a = b = 1$ cases of \Cref{eq:parking-gf} along with \Cref{eq:parking-gf-a-2-expr,eq:parking-gf-a-1-t,eq:parking-gf-a-1-q} gives the expression of $PF_{2, b}(x, q, t)$ in \Cref{eq:parking-gf}.
  
  For the general case $a \geq 3$, we decompose $b$-ballot paths $P \in \ballot_{a, b}$ by their first visit to height $a - 1, a - 2, \ldots, 0$ as $P = P^{(a - 1)} D P^{(a - 2)} D \cdots P^{(1)} D P^{(0)}$, where $P^{(i)} \in \ballot_{1, b}$ for all $i$.
  There are again two cases, similar to those for $a = 2$, and obtained through the same reasoning:
  \begin{itemize}
  \item $P^{(a - 1)}$ is empty, in which case $\run(P) = 1 + \run(P^{(a - 2)})$ and $\return(P) = 1 + \return(P^{(0)})$;
  \item $P^{(a - 1)}$ is non-empty, in which case $\run(P) = \run(P^{(a - 1)})$ and $\return(P) = 1 + \return(P^{(0)})$.
  \end{itemize}
  Note that the only difference for $a > 2$ is that $P^{(a - 2)}$ and $P^{(0)}$ are distinct.
  Summing up the contributions, we have
  \begin{align}
    \label{eq:parking-gf-a-expr}
    FP_{a, b}(x, q, t) &= qt\, PF_{1, b}(x, q, 1)\, A^{a - 3} PF_{1, b}(x, 1, t) + t\, (PF_{1, b}(x, q, 1) - 1) A^{a - 2} PF_{1, b}(x, 1, t).
  \end{align}
  Substituting \Cref{eq:parking-gf-a-1-t,eq:parking-gf-a-1-q} in \Cref{eq:parking-gf-a-expr}, all valid for $b \geq 1$, leads to the expression of $PF_{a, b}(x, q, t)$ in \Cref{eq:parking-gf}.
\end{proof}

\begin{remark} \label{rmk:parking-gf-lambert}
  Note that by applying Lagrange inversion on \Cref{eq:parking-gf-lambert} for $A_{b}(x)$ yields $A_{b}(x) = 1 + \sum_{n \geq 1} (bn + 1)^{n - 1} \frac{x^n}{n!}$.
  Using the same technique, we deduce \Cref{eq:PF,thm:prime:ab}:
  \begin{itemize}
  \item For \Cref{eq:PF}, from \Cref{thm:parking-gf}, we have
    \[
      PF_{a, b}(x) \equiv FP_{a, b}(x, 1, 1) = A_b(x)^a.
    \]
    Applying Lagrange inversion along with \Cref{eq:parking-gf-lambert}, we obtain \Cref{eq:PF}.
  \item For \Cref{thm:prime:ab}, the case $a > 1$ comes directly from \Cref{eq:PF}, as $\PPF_n(a,b) = \PF_n(a - 1, b)$ by \Cref{rmk:prime-parking}~(4).
    For $a = 1$, we observe that their generating function is exactly $PF_{1, b}(x, 1, 0)$, which by \Cref{thm:parking-gf} is equal to $A_b(x)^{a - 1}$, and we obtain the result using Lagrange inversion and \Cref{eq:parking-gf-lambert}.
  \end{itemize}
  The same can be done for $PF_{a, b}(x, q, t)$, as all expressions on the right-hand side of \Cref{eq:parking-gf} are rational fractions of $A_{b}(x)$.
\end{remark}

\begin{remark}
  Using the same technique, with a more involved decomposition, we may obtain similar expressions for $\u$-parking functions with $\u$ ultimately periodic considered in \cite[Section~6]{Cai2020}, refined by $(\pri, \run)$, using the symbolic method \cite{flajolet}. More precisely, as $\u$ is ultimately periodic, it is periodic after a certain finite $N$ elements, whose effect can be taken into account by finitely many generating functions. Then for the period $\u'$, for each of its elements we may write the decomposition by the first step, leading to a system similar to the one in the proof of \Cref{thm:parking-gf}. Its strongly connected component leads to an analogue of \Cref{eq:parking-gf-lambert}, but of the form $A = \exp(Q(x, A))$, where $Q$ is a polynomial with positive coefficients. Other refined generating functions will be rational fraction of $A$.
\end{remark}

\begin{remark}\label{rmk:cycle}
  We note that many interesting statistics on $(a, b)$-parking functions $\alpha$ of length $n$ can be easily translated to $b$-ballot paths.
  Examples include the preference profile
  \[
    p(\alpha) = (|\{ i \in [a + (n - 1)b] : m_i(\alpha) = k \}|)_{k \geq 1}, \text{ where } m_i(\alpha) = |\{ j \in [n] : \alpha_j = i \}|.
  \]
  and the multiset of forward differences \cite{forward-diff}
  \[
    \Delta(\alpha) = (|\{ i: \alpha_{i} - \alpha_{i - 1} \equiv \ell \pmod{M} \}|)_{\ell \geq 0}, \text{ with } M =
    \begin{cases}
      bn - 1 & \text{ if } a = 1; \\
      a - 1 + bn & \text{ if } a > 1.
    \end{cases}
  \]
  They are translated respectively to the profile of up steps aligned to each line of slope $b$, and to pairs of up steps with consecutive labels on lines with $x$-translation distance $\ell$ modulo $M$.
  As such statistics on $b$-ballot paths are stable under the rotation operation dictated by the corresponding cycle lemma in each case, using cycle lemma (or equivalently Pollak's argument~\cite{pollak}), their refined enumeration can be deduced from that on unconstrained paths, which is much simpler.
  For instance, for preference profile, it is clear that
  \[
    \sum_{\alpha \in \PF_n(a, b)} \prod_{j \geq 1} u_j^{m_i(\alpha)} \quad \propto_a \quad \frac{n!}{M} [z^n] \left( 1 + \sum_{j \geq 1} u_j \frac{z^j}{j!} \right)^M,
  \]
  with $M$ defined in the definition of the multiset of forward differences. Here, $\propto_a$ means that the proportional coefficient depends only on $a$.
\end{remark}

\section{Concluding remarks}\label{sec7}

In this paper, we establish several symmetries on vector parking functions, but the core arguments are carried out on bounded lattice paths.
To provide a unified framework for the run-like statistics, we introduced the composition runs. We showed that for any composition $\c$ satisfying $\c \le_c \vc(B)$ (componentwise order), $\run_\c$ is symmetric with $\return$ on $\CP_B$ (\Cref{thm:Comp}).
An immediate consequence is that any two such statistics $\run_\c$ and $\run_{\c'}$ with $\c, \c' \le_c \vc(B)$ are equidistributed on $\CP_B$ (\Cref{coro:equidist}).
While it follows from composing the involutions $\Psi_\c$ and $\Psi_{\c'}$, it would be interesting to construct a direct combinatorial explanation.

To answer Problem~\ref{prob:Dai}, we introduced the block return, which exhibits a remarkable symmetry with the run statistic on bounded lattice paths (Theorem~\ref{thm:Theta}).
When $\run$ is replaced by $\run_{\c}$, this symmetry---and even equidistribution---is lost.
We pose the following natural problem for further investigation. 

\begin{problem}
  Let $B$ be a boundary and $\c$ a composition with $\c \le_c \vc(B)$.
  If $\c \neq (1, \dots, 1)$, motivated by the symmetry between $\run$ and $\return$, can one associate to each composition $\c$ a statistic $\return_\c$  that is jointly symmetric with $\run_\c$ on $\P_B$?
\end{problem}

\section*{Acknowledgments}

Fang was partially supported by the French \textit{Agence nationale de recherche} project PAGCAP (ANR-21-CE48-0020).
Li and Lin were supported by the National Science Foundation of China (grants 12271301 \& 12322115) and the Fundamental Research Funds for the Central Universities. 
We used LLM only for research on literature and drafting the first version of some figures.

\printbibliography

\end{document}